\def\authornik{}
\newtheorem{theorem}{\THEOREM}
\newtheorem{lemma}{\LEMMA}
\newtheorem{prop}{\PROPOSITION}
\newtheorem{cor}{\COROLLARY}
\newtheorem{defn}{\DEFINITION}
\newtheorem{rem}{\REMARK}
\newtheorem{example}{\EXAMPLE}
\newtheorem{problem}{\PROBLEM}
\newtheorem{question}{\QUESTION}
\renewcommand{\emptyset}{\varnothing}
\newcommand{\SetTheorems}{%
	\SetLangDep{\THEOREM}{Теорема}{Теорема}{Theorem}%
	\SetLangDep{\LEMMA}{Лема}{Лемма}{Lemma}%
	\SetLangDep{\PROPOSITION}{Твердження}{Предложение}{Proposition}%
	\SetLangDep{\COROLLARY}{Наслідок}{Следствие}{Corollary}%
	\SetLangDep{\DEFINITION}{Означення}{Определение}{Definition}%
	\SetLangDep{\REMARK}{Зауваження}{Замечание}{Remark}%
	\SetLangDep{\EXAMPLE}{Приклад}{Пример}{Example}%
	\SetLangDep{\DESCRIPTION}{Опис}{Описание}{Description}%
	\SetLangDep{\PROBLEM}{Приклад}{Пример}{Problem}%
	\SetLangDep{\QUESTION}{Приклад}{Пример}{Question}%
}
\newcommand{\ResetCounters}{%
\setcounter{theorem}{0}%
\setcounter{lemma}{0}%
\setcounter{prop}{0}%
\setcounter{cor}{0}%
\setcounter{defn}{0}%
\setcounter{rem}{0}%
}
\newcounter{article_num}
\newcommand\SET[2]{\def#1{#2}}
\newcommand\SetLangDep[4]{%
	\def#1{
	\ifthenelse{\equal{\LANGUAGE}{ukrainian}}{#2}{%
	\ifthenelse{\equal{\LANGUAGE}{russian}}{#3}{%
	\ifthenelse{\equal{\LANGUAGE}{english}}{#4}{}%
	}%
	}%
	}%
}
\SET{\Year}{2011}
\SET{\Volume}{2}
\SET{\Number}{1}
\newcommand\UDK[1]{\def\MYUDK{#1}}
\newcommand\TITLE[1]{\def\MYTITLE{#1}}
\newcommand\AUTHORa[1]{\def\MYAUTHORA{#1}}
\newcommand\AUTHORb[1]{\def\MYAUTHORB{#1}}
\newcommand\AUTHORc[1]{\def\MYAUTHORC{#1}}
\newcommand\AUTHORd[1]{\def\MYAUTHORD{#1}}
\newcommand\ADDRESSa[1]{\def\MYADDRESSA{#1}}
\newcommand\ADDRESSb[1]{\def\MYADDRESSB{#1}}
\newcommand\ADDRESSc[1]{\def\MYADDRESSC{#1}}
\newcommand\ADDRESSd[1]{\def\MYADDRESSD{#1}}
\newcommand\EMAILa[1]{\def\MYEMAILA{#1}}
\newcommand\EMAILb[1]{\def\MYEMAILB{#1}}
\newcommand\EMAILc[1]{\def\MYEMAILC{#1}}
\newcommand\EMAILd[1]{\def\MYEMAILD{#1}}
\newcommand\KEYWORDS[1]{\def\MYKEYWORDS{#1}}
\newcommand\abstractukr[1]{\def\myabstractukr{#1}}
\newcommand\abstractrus[1]{\def\myabstractrus{#1}}
\newcommand\abstracteng[1]{\def\myabstracteng{#1}}
\newcommand\THANKS[1]{\def\MYTHANKS{#1}}
\newcommand\HEADERS[2]{\def\MYSHORTTITLE{#1}  \def\MYSHORTAUTHORS{#2}}
\newcommand\CLEANALL{
\UDK{\empty}
\TITLE{\empty}
\AUTHORa{\empty}
\AUTHORb{\empty}
\AUTHORc{\empty}
\AUTHORd{\empty}
\ADDRESSa{\empty}
\ADDRESSb{\empty}
\ADDRESSc{\empty}
\ADDRESSd{\empty}
\EMAILa{\empty}
\EMAILb{\empty}
\EMAILc{\empty}
\EMAILd{\empty}
\KEYWORDS{\empty}
\abstractukr{\empty}
\abstractrus{\empty}
\abstracteng{\empty}
\SET{\myfirstpage}{\empty}
\SET{\mylastpage}{\empty}
\THANKS{\empty}
\HEADERS{\empty}{\empty}
}
\newcommand\NEWARTICLE[1]{
\CLEANALL
\addtocounter{article_num}{1}
\SET{\LANGUAGE}{#1}
\selectlanguage{#1}
\SetLangDep{\KEYWORDSNAME}{Ключові слова}{Ключевые слова}{Keywords}
\SET{\myfirstpage}{page-first-\authornik}
\SET{\mylastpage}{page-last-\authornik}
\SetTheorems
\ResetCounters
}
\newcommand\authorslist{{\bf \copyright~~\MYAUTHORA %
\ifthenelse{\equal{\MYAUTHORB}{}}%
           {%
              \ifthenelse{\equal{\MYAUTHORC}{}}%
                         {%
                                            \ifthenelse{\equal{\MYAUTHORD}{}}%
                                            {}%
                                            {, \MYAUTHORD}%
                         }%
                         {, \MYAUTHORC}%
           }%
           {, \MYAUTHORB},  \Year%
}}
\newcommand\makeheaders{\pagestyle{myplain}\thispagestyle{initpage}}
\newcommand\makeabstract[1]{
\begin{flushright}
 \begin{minipage}{0.95\textwidth}
 \footnotesize #1
 \end{minipage}
\medskip
\end{flushright}
}
\newcommand\MAKETITLE{%
\makeheaders
\vspace*{-2.2\headheight}
{\selectlanguage{ukrainian}\begin{flushright}
\end{flushright}}

\begin{flushleft}
\vspace*{0.5cm}
\ifthenelse {\equal{\MYUDK}{}} {} {\selectlanguage{ukrainian} \MYUDK  \\ \medskip}
{\it\bfseries \large \MYAUTHORA \\ \smallskip }
\ifthenelse{\equal{\MYADDRESSA}{}}{}{ {\it \MYADDRESSA}\\ }
\ifthenelse{\equal{\MYEMAILA}{}}{} { {\it E-mail:} \texttt{\MYEMAILA}  \\ \medskip\smallskip }  %
\ifthenelse{\equal{\MYAUTHORB}{}}{}{ %
  {\it\bfseries \large\MYAUTHORB} \\  \smallskip
  \ifthenelse{\equal{\MYADDRESSB}{}}{}{ {\it \MYADDRESSB}\\ }
  \ifthenelse{\equal{\MYEMAILB}{}}{} { {\it E-mail:} \texttt{\MYEMAILB}  \\ } \medskip\smallskip %
}
\ifthenelse{\equal{\MYAUTHORC}{}}{}{ %
  {\it\bfseries \large\MYAUTHORC} \\  \smallskip
  \ifthenelse{\equal{\MYADDRESSC}{}}{}{ {\it \MYADDRESSC}\\ }
  \ifthenelse{\equal{\MYEMAILC}{}}{} { {\it E-mail:} \texttt{\MYEMAILC}  \\ } \medskip\smallskip %
}
\ifthenelse{\equal{\MYAUTHORD}{}}{}{ %
  {\it\bfseries \large\MYAUTHORD} \\  \smallskip
  \ifthenelse{\equal{\MYADDRESSD}{}}{}{ {\it \MYADDRESSD}\\ }
  \ifthenelse{\equal{\MYEMAILD}{}}{} { {\it E-mail:} \texttt{\MYEMAILD}  \\ } \medskip\smallskip %
}
{\LARGE\bf\MYTITLE \ifthenelse{\equal{\MYTHANKS}{}}{}{\footnote{\MYTHANKS}}} \\ \medskip
\end{flushleft}
\ifthenelse{ \equal{\myabstractukr}{} }{} {{\selectlanguage{ukrainian}\makeabstract{\myabstractukr}}}
\ifthenelse{ \equal{\myabstractrus}{} }{} {{\selectlanguage{russian}\makeabstract{\myabstractrus}}}
\ifthenelse{ \equal{\myabstracteng}{} }{} {{\selectlanguage{english}\makeabstract{\myabstracteng}}}
\ifthenelse{ \equal{\MYKEYWORDS}{} }{} {{\footnotesize{\bf \KEYWORDSNAME}: {\it \MYKEYWORDS}}}
\medskip\setcounter{section}{0}
%
%
%
}
\DeclareMathAlphabet{\mathbi}{OML}{cmm}{b}{it}
\newcommand{\R}{{\mathbb R}}
\newcommand{\reals}{{\mathbb R}}
\newcommand{\N}{{\mathbb N}}
\newcommand{\Q}{{\mathbb Q}}
\newcommand{\Z}{{\mathbb Z}}
\newcommand{\la}{{\langle}}
\newcommand{\ra}{{\rangle}}
\newcommand{\CSP}{\mbox{\em CSP}}
\newcommand{\eps}{\varepsilon}
\newcommand{\nbd}{neighbourhood }
\newcommand{\ttt}{{tight}}
\begin{document}

\NEWARTICLE{english}

\TITLE{Topics in uniform continuity}



\AUTHORa{Dikran Dikranjan}   
\ADDRESSa{Dipartimento di Matematica e Informatica,
Universit\`{a} di Udine,  Via delle Scienze  206, 33100 Udine, Italy}
\EMAILa{dikranja@dimi.uniud.it} 

\AUTHORb{Du\v san Repov\v s}   
\ADDRESSb{Faculty of Mathematics and Physics,  and Faculty of Education, University of Ljubljana, P.O.B. 2964, 1001 Ljubljana, Slovenia } 
\EMAILb{dusan.repovs@guest.arnes.si} 


\abstracteng{This paper collects results and open problems concerning
several classes of functions that generalize  uniform continuity in various ways,
including those metric spaces (generalizing Atsuji spaces) where all continuous functions have the property of being close to uniformly continuous. }



\KEYWORDS{Closure operator, uniform continuity, Atsuji space, UA space, straight space, hedgehog, magic set, locally connected space.}

\HEADERS{Topics in uniform continuity}{D. Dikranjan and D. Repov\v s}

\label{\myfirstpage}
\MAKETITLE


\medskip

\centerline{\large Dedicated to the memory of Jan Pelant (1950-2005)}

\medskip

\section{Introduction}

The uniform continuity of maps between metric or uniform spaces 
determines a specific topic in general topology. By the end of the fifties and in the seventies the attention was concentrated on those spaces (called UC spaces, or Atsuji spaces) on which uniform continuity coincides with continuity. 
Of course, compact spaces are UC spaces, but there also exist non-compact UC spaces (e.g., the uniformly discrete ones). 

In this survey we consider several aspects of uniform continuity and its relationship with continuity. We start with a discussion 
of the possibility to capture uniform continuity by means of the so-called closure operators \cite{DT}. The most relevant and motivating example
of a closure operator is the usual Kuratowski closure $K$ in the category ${\bf Top}$ of topological spaces and continuous maps. It is well known that
one can describe the morphisms in ${\bf Top}$ (i.e., the continuous maps) in an equivalent way as the maps ``compatible" with the 
 Kuratowski closure (see  \S 2.2). In this setting appear the {\em uniformly approachable} and the {\em weakly uniformly approachable} (briefly, {\em UA} and {\em  WUA}, resp.) functions (see  Definition \ref{UA:Definition}).
Section 3 compares the properties UA and WUA with the property of the u.c. functions which have  
distant fibers in an appropriate sense. In Section 4 we consider studies those spaces $X$ on which every continuous function $X\to \R$ is
UA. This class contains the well-known  {\em Atsuji } spaces, where every continuous function is u.c.  [A1, A2, Be, BDC]. 

Section 5 deals with those metric spaces in which the uniform quasi-components of every closed subspace are closed. Every UA space is thin, but there exist complete
thin spaces that are not UA. The main result of this section is a separation property of the complete thin spaces. 

The last section is dedicated to ``additivity", which turns out to be quite a non-trivial question in the case of uniform continuity. More precisely, we discuss here 
 the {\em straight} spaces: these are the metric spaces on which a continuous function $X \to \R$ is uniformly continuous
whenever the restrictions $f\restriction_{F_1}$ and $f\restriction_{F_2}$ on each member of an arbitrary closed binary cover $X = F_1\cup F_2$ are uniformly continuous. 

At the end of the paper we collect most of open problems and question (although some of them can be found in the main text). 

We did not include in this paper several related issues. One of them, {\em magic sets}, is a topic that appeared in connection with UA functions, even though it has no apparent connection to uniform continuity. The reader can see [B2, BC1, BC2, BeDi1, BeDi2, CS] for more on this topic.

We are dedicating this survey to the memory of the outstanding topologist and our good friend Jan Pelant, who actively worked on this topic and contributed the most relevant results in this area. 

\subsection{Notation and terminology} 

A topological space $X$ is called {\em hereditarily disconnected} if all connected components of $X$ are trivial,
 while $X$ is called {\em totally disconnected}, if  all   quasi components of $X$ are trivial \cite{E}.
The closure of a subset $Y$ of a topological space $X$ will be denoted by  $cl(Y)$ or $\overline Y$. 
All topological spaces considered  in this paper are assumed to be Tychonoff.
A topological space is said to be {\em Baire}\/ if it  satisfies the Baire Category Theorem,
i.e., if every meager subset of $X$ has  empty interior. A {\em Cantor set}\/  is a nonvoid zero-dimensional compact
metrizable space with no isolated points, i.e., a homeomorphic copy of the
Cantor middle thirds set.  If $X$ is a topological space, we write $
C(X)$ for the family of all continuous real-valued functions on $X$, and $C_n(X)$ for the sets of continuous nowhere constant real-valued functions on $X$. (To say 
that a continuous function $f\colon X\to\R$ is nowhere constant is equivalent to saying that $f^{-1}(y)$ is nowhere dense for each $y\in\R$.) 

A metric space is said to be {\em uniformly locally connected} (shortly, {\em ULC}) [HY, 3-2], if for every $\varepsilon > 0$ there is $\delta > 0$ such that any two points at
distance $< \delta$ lie in a connected set of diameter $< \varepsilon$.  In other words, close  points can be connected by small  connected sets. For example, 
convex subsets of  $\R^m$ (or any Banach space) are uniformly locally connected.

\section{Uniform continuity vs  continuity}
 
\subsection{Global view on closure operators}

Closure operators can be introduced in a quite general context \cite{DT}. The prominent examples that inspired this general notion
were given by Isbell (in the category of semigroups, or more generaly, categories of universal algebras) and Salbany (in the category ${\bf Top}$ of topological spaces and
continuous maps). 
We briefly recall here the notion of a closure operator of ${\bf Top}$, following \cite{DT,DTW}.

A {\it closure operator} of ${\bf Top}$ is a family $C=(c_X)_{X\in {\bf Top}}$ of maps $ c_X:2^X \longrightarrow 2^X $  such that for every $X$ in ${\bf Top}$
\begin{itemize}
\item[(i)]  $ M\subseteq c_X(M)$ for all $M\in 2^X$;
\item[(ii)] $M\subseteq M'\in 2^X \Rightarrow c_X(M)\subseteq c_X(M')$; and 
\item[(iii)] $f(c_X(M))\subseteq c_Y(f(M))$ for all  $f:X\to Y$ in ${\bf Top}$ and $M\in 2^X$.
\end{itemize}

A prominent example is the Kuratowski closure operator $K$. Every continuous function satisfies the ``continuity" condition (iii) for every closure operator $C$.  
For a  closure operator $C$ of ${\bf Top}$   we say that the set map $f:X \to Y$ is $C$-{\it continuous}, if it satisfies (iii).
It is easy to see that a map $f:X \to Y$  is continuous if and only if it is $K$-{\it continuous}. In other words, the morphisms in ${\bf Top}$ can be detected by a closure operator
(as $K$-{\it continuous} maps). 

Analogously, a  closure operator of ${\bf Unif}$ can be defined as a family $C=(c_X)_{X\in {\bf Unif}}$ of maps $ c_X:2^X \longrightarrow 2^X $  such that for every $X$ in ${\bf Unif}$
items (i) and (ii) are satisfied, and 
\begin{itemize}
\item[(iii$_u$)] $f(c_X(M))\subseteq c_Y(f(M))$ for all  $f:X\to Y$ in ${\bf Unif}$ and $M\in 2^X$.
\end{itemize}
 
We say that $C$ is {\em additive} ({\em idempotent}) if the equality $c_X(M\cup N)=c_X(M)\cup c_X(N)$ (resp., $c_X(c_X(M))=c_x(M)$) always holds. 

The Kuratowski closure operator $K$ is a closure operator of ${\bf Unif}$. Analogously, for a  closure operator $C$ of ${\bf Unif}$  one can say that the set map $f:X \to Y$ is $C$-{\it continuous}, if it satisfies (iii$_u$); $f$ is said to be   {\it totally continuous} if it is $C$-continuous for every closure operator $C$ of ${\bf Unif}$. 

 In the category ${\bf Top}$ of topological spaces a map is continuous if and only if (iii) is holds for $C=K$.  Hence morphisms in
 ${\bf Top}$ are determined by the closure operator $K$.  Can the same be said of ${\bf Unif}$?  This question was answered in the negative in \cite{DP}. We briefly sketch the proof here. 

The spaces needed as tools are the uniformly discrete two-point space $D=\{0,1\}$, the one-point compactification  $\N _\infty $ of the naturals $\N$  equipped with its unique uniformity, and two uniformly close sequences, which are not topologically close
$$
X_0:=\{(n,1/n)\vert\ n\in{\N}\}\cup\{(n,-1/n)\vert\ n\in{\N}\}\subseteq{\R}^2.
$$
Set 
$$
M_a = \{(n,1/n)|n\in \N\}, \; \;   M_b = \{(n,-1/n)| n\in \N\} 
$$ 
and consider the map $\pi :X_0\to D$ defined by  $\pi (a_n)=0$
and $\pi (b_n)=1$ for each $n$.  Clearly $\pi$ is continuous but not uniformly continuous,  since the open disjoint binary cover $X_0=M_a\cup M_b$ is not uniform. 

\begin{lemma}{\rm (\cite{DP})}  In the above notation:  
\begin{itemize}
\item[(a)] the map $\pi$ is $C$-continuous for every additive closure operator  $C$ of ${\bf Unif}$ such that either  $c(M_a) \backslash M_a$ or $c(M_b) \backslash M_a$ is finite;
\item[(b)]  if $c(M_a) \backslash M_a$ is infinite for a closure operator $C$ of ${\bf Unif}$, then for every metric Baire space $B \in {\bf Unif}$ without 
isolated points  there exists a discontinuous map $f_B:\N _\infty \to B$  which is $C$-continuous. 
\end{itemize}
\end{lemma}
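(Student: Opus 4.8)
The common engine for both parts is an elementary remark on the geometry of $X_0$. The only pairs of distinct points whose distance tends to $0$ are the twins $(a_n,b_n)$, at distance $2/n$; every other pair of distinct points is at distance $\geq 1$. Hence a set map $g\colon X_0\to Y$ into a uniform space $Y$ is uniformly continuous if and only if for each entourage $U$ of $Y$ one has $(g(a_n),g(b_n))\in U$ for all sufficiently large $n$. In particular each $b_m$ is \emph{uniformly isolated}, its nearest neighbour $a_m$ sitting at the fixed distance $2/m$, so one may reassign the value of $g$ at a single $b_m$ without spoiling uniform continuity, provided $g$ still identifies all but finitely many twins. This freedom is what I would use to build all the u.c. test maps below.

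For part (a) I would verify condition (iii$_u$) for $\pi$ directly, i.e. $\pi(c(M))\subseteq c_D(\pi(M))$ for every $M\subseteq X_0$. Writing $M=A\cup B'$ with $A=M\cap M_a$ and $B'=M\cap M_b$, additivity gives $c(M)=c(A)\cup c(B')$, so it suffices to bound $\pi(c(A))$ and, symmetrically, $\pi(c(B'))$. Since $A\subseteq M_a$ we have $c(A)\subseteq c(M_a)=M_a\cup F_a$ with $F_a:=c(M_a)\setminus M_a\subseteq M_b$. The only way $\pi(c(A))$ can acquire the wrong value $1$ is that some $b_k\in c(A)\subseteq c(M_a)$, and then $F_a\neq\emptyset$; in that case I would feed the u.c. map $\phi\colon X_0\to D$ with $\phi\restriction_{M_a}\equiv 0$, $\phi(b_k)=1$ and $\phi(b_n)=0$ for $n\neq k$ (legitimate by the remark) into (iii$_u$), obtaining $1=\phi(b_k)\in\phi(c(M_a))\subseteq c_D(\{0\})$, so $c_D(\{0\})=D$ and the required inclusion is automatic. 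If instead $c(A)\cap M_b=\emptyset$ then $\pi(c(A))\subseteq\{0\}\subseteq c_D(\pi(M))$ whenever $A\neq\emptyset$, while $A=\emptyset$ gives $c(A)=\emptyset$. The term $\pi(c(B'))$ is handled the same way with $F_b:=c(M_b)\setminus M_b$ in place of $F_a$; alternatively the reflection $\rho(x,y)=(x,-y)$, a uniform automorphism of $X_0$ interchanging $M_a$ and $M_b$ and satisfying $\rho\,c=c\,\rho$ (as both $\rho,\rho^{-1}$ are u.c.), interchanges $F_a$ and $F_b$ and reduces the ``$F_b$ finite'' case to the one already treated.

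For part (b) the hypothesis that $F_a$ is infinite (in fact $F_a\neq\emptyset$ already suffices) lets me choose $b_m\in c(M_a)\cap M_b$ and, by the remark, a u.c. map $u\colon X_0\to\N_\infty$ that is constantly $1$ on $M_a$ and on $M_b\setminus\{b_m\}$ and sends the single point $b_m$ to $\infty$; here the lone ``split'' pair $(a_m,b_m)$ is harmless because its distance $2/m$ is bounded away from $0$. Then (iii$_u$) gives $\infty\in u(c(M_a))\subseteq c_{\N_\infty}(\{1\})$. This single anomaly spreads to every space: for any uniform $Y$ and any $y_0,y_1\in Y$ the map $h\colon\N_\infty\to Y$ with $h(1)=y_0$ and $h(t)=y_1$ for all $t\neq 1$ (including $t=\infty$) is continuous, hence uniformly continuous as $\N_\infty$ is compact, so (iii$_u$) yields $y_1=h(\infty)\in c_Y(\{y_0\})$. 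Thus $c_Y(\{y_0\})=Y$ for every point, $C$ is the indiscrete closure operator, and (iii$_u$) then holds for \emph{every} set map. In particular, picking distinct $p\neq q$ in $B$ (available since $B$ has no isolated points) and setting $f_B(\infty)=p$, $f_B(n)=q$ for all $n$ gives a discontinuous, $C$-continuous map $f_B\colon\N_\infty\to B$.

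The step I expect to demand the most care is the uniform continuity of the auxiliary maps $\phi$ and $u$: everything hinges on the claim that one twin pair may be split without destroying uniform continuity, which is exactly the uniform-isolation remark and must be checked against the entourage base of $\N_\infty$ (equivalently against the metric $d(n,\infty)=1/n$). A secondary technical point is grounded-ness, $c_X(\emptyset)=\emptyset$, which I would invoke to dispose of the cases $A=\emptyset$ or $B'=\emptyset$ in part (a) and to validate (iii$_u$) for $f_B$ on the empty set. I would finally remark that this analysis shows the finite/infinite dichotomy is really the dichotomy between $F_a=\emptyset$ and $C$ being indiscrete, so that the Baire and no-isolated-points hypotheses on $B$ enter only through the existence of two distinct points.
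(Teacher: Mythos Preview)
The paper does not supply its own proof of this lemma; it is simply quoted from \cite{DP}. So there is nothing in the present paper to compare your argument against, and I can only assess correctness.

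Your argument is correct, and in fact it proves strictly more than the lemma asserts. In part~(a) your $\phi$-trick shows, for \emph{any} additive $C$, that either $c(M_a)\subseteq M_a$ or else $c_D(\{0\})=D$ (and symmetrically for $M_b$); in both alternatives $\pi$ is $C$-continuous. The finiteness hypothesis on $F_a$ or $F_b$ is therefore never invoked. In part~(b) your maps $u$ and $h$ show that already $F_a\neq\emptyset$ forces $\infty\in c_{\N_\infty}(\{1\})$ and hence $c_Y(\{y_0\})=Y$ for every uniform space $Y$ and every $y_0\in Y$; so $C$ is indiscrete on nonempty sets and \emph{every} set map is $C$-continuous. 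Your concluding remark that the finite/infinite dichotomy collapses to ``$F_a=\emptyset$'' versus ``$C$ indiscrete'', and that $B$ is used only through having two points, is accurate. Presumably the original argument in \cite{DP} took a different route that genuinely consumed the Baire hypothesis and the infinitude of $F_a$; your approach is more elementary and yields a sharper conclusion.

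One small gap: you invoke groundedness $c_X(\emptyset)=\emptyset$, but this is not among the axioms (i)--(iii$_u$) listed in the paper. The $M=\emptyset$ case is nevertheless harmless: if $c_{X_0}(\emptyset)\neq\emptyset$, then applying (iii$_u$) to the two constant (hence u.c.) maps $X_0\to D$ gives $c_D(\emptyset)=D$, and the required inclusion $\pi(c_{X_0}(\emptyset))\subseteq c_D(\emptyset)$ is automatic. The analogous remark covers the $M=\emptyset$ case for $f_B$ in part~(b).
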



\begin{theorem}{\rm (\cite{DP})}   Let $C$ be an additive closure operator of ${\bf Unif}$. Then either $\pi:X_0 \to D$ is $C$-continuous  or for every metric Baire space
without isolated points $B$ there exists a discontinuous map $f_B:\N _\infty \to B$  which is  $C$-continuous.
\end{theorem}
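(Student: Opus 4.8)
The plan is to read off the theorem as a straightforward dichotomy from the two parts of the preceding Lemma, using the finiteness or infiniteness of the sets $c(M_a)\setminus M_a$ and $c(M_b)\setminus M_b$ as the decisive invariant. Concretely, I would split into the two complementary cases ``at least one of $c(M_a)\setminus M_a$, $c(M_b)\setminus M_b$ is finite'' and ``both are infinite'', which are plainly exhaustive since the second is the negation of the first.

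In the first case, where at least one of the two differences is finite, part (a) of the Lemma applies verbatim (here one uses that $C$ is additive, exactly as assumed in the theorem) and yields the $C$-continuity of $\pi\colon X_0\to D$. This is precisely the first alternative of the asserted dichotomy, so nothing further is needed.

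In the complementary case both $c(M_a)\setminus M_a$ and $c(M_b)\setminus M_b$ are infinite; in particular $c(M_a)\setminus M_a$ is infinite, so part (b) of the Lemma supplies, for every metric Baire space $B$ without isolated points, a discontinuous $C$-continuous map $f_B\colon \N_\infty\to B$. This is the second alternative. I would also remark that by the evident symmetry interchanging the roles of $M_a$ and $M_b$ (which is why the hypothesis in (a) may equally be read with $c(M_b)\setminus M_b$), invoking (b) requires only that \emph{one} of the two differences be infinite, so the case split is robust against which fiber one singles out.

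Since the theorem is thus an immediate corollary of the Lemma, the \textbf{main obstacle does not lie in the dichotomy itself} but in the Lemma, whose part (b) carries the real weight: for an arbitrary metric Baire space $B$ without isolated points one must manufacture a genuinely discontinuous map $f_B\colon\N_\infty\to B$ that nonetheless respects $C$. I would expect that construction to exploit the infinitude of $c(M_a)\setminus M_a$ to locate a discontinuity at the point $\infty$ of $\N_\infty$, while using the Baire property and the absence of isolated points in $B$ to select image points keeping $f_B$ compatible with $c_X$. At the level of the theorem, however, no such construction is performed; one merely applies (a) or (b) according to the case, and the two together exhaust all additive closure operators $C$.
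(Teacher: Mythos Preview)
Your derivation is correct and is exactly the intended one: the paper states the theorem immediately after the Lemma without a separate proof, precisely because it is the obvious dichotomy obtained by splitting on whether at least one of $c(M_a)\setminus M_a$, $c(M_b)\setminus M_b$ is finite (apply part (a)) or both are infinite (apply part (b)). Your remark that the substantive work lies entirely in part (b) of the Lemma is also on target.
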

  

This shows that 
for every additive closure operator $C$ of ${\bf Unif}$ one can find a $C$-continuous
map that is not uniformly cotninuous. Hence, a single closure operator of ${\bf Unif}$ cannot detect  uniform continuity. This theorem also allows us to see  which $C$-continuous maps fail to be u.c.

It is natural to expect that using more than just one closure operator things may change. We shall see now that this is not the case. Even a totally continuous map, which satisfies (iii) for every closure operator $C$, is not necessarily uniformly continuous.
The tool to achieve this result is the following notion introduced in \cite{DP} and \cite{BD1}:

\begin{defn}\label{UA:Definition}{\rm (\cite{BD1,DP})} We say that $f \in C(X,Y)$ is ${\bf UA}$ ({\bf uniformly approachable}), if for every compact set $K \subseteq X$ and every
set $M\subseteq X$, there is a $UC$ function $g \in C(X,Y)$ which coincides with $f$ on $K$ and satisfies $g(M) \subseteq f(M)$. 

We then say that $g$ is a {\bf $(K,M)$-approximation of $f$}.  If  we require in the definition of $UA$  that $K$ consists of a single point we obtain the weaker notion ${\bf WUA}$ ({\bf weakly UA}). \end{defn}

Clearly,  $UA$ implies $WUA$. 

It was shown in \cite{DP} that $\R$ with the natural uniformity has the property that every continuous selfmap is uniformly approachable: 
 
\begin{example}\label{ex1}
Every $f\in C(\R)$ is UA. Indeed, let $K=[-n,n]$ and let $M\subseteq \R$ be an arbitrary non-empty set. Pick any $m_1\in M\cap (-\infty, -n]$ if this set is non-empty, otherwise
take $m_1=-n$. Choose $m_2\in M$ analogously. Then the function $g:\R\to \R$ defined by

$$
g(x)=
\begin{cases}f(m_1),\mbox{ if }x\leq m_1 \\
          f(x), \;\; \mbox{ if }m_1\leq x\leq m_2\\
                f(m_2), \mbox{ if }x\geq m_2 \end{cases} 
                $$
is a $(K,M)$-approximation of $f$. 
\end{example} 

Since ``uniformly approachable'' implies ``totally continuous'' and $f(x)=x\sp 2$ is not uniformly continuous it follows that uniform continuity is not detected even by all closure operators in ${\bf Unif}$.
 However,
 Burke noticed \cite[Example~3.3]{BD1} that there are continuous non-$WUA$ functions on $\R^2$ (in fact, $f\colon\R^2\to\R$, $f(x,y)=xy$, is such a function, see Example 5).

The next theorem easily follows from the definitions:

\begin{theorem} {\rm (\cite{DP})} Every WUA function is totally continuous.
\end{theorem}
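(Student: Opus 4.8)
The plan is to verify directly the defining inclusion of $C$-continuity, namely (iii$_u$): $f(c_X(M)) \subseteq c_Y(f(M))$, for an \emph{arbitrary} closure operator $C$ of ${\bf Unif}$ and an arbitrary $M \subseteq X$. Since $C$ ranges over all closure operators of ${\bf Unif}$, establishing this inclusion in each case is exactly what is needed to conclude that $f$ is totally continuous.

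First I would fix $M \subseteq X$ and pick an arbitrary point of $f(c_X(M))$, writing it as $f(q)$ with $q \in c_X(M)$; the goal is to show $f(q) \in c_Y(f(M))$. The key idea is to feed the WUA hypothesis with the single-point compact set $K = \{q\}$ and the set $M$. By Definition \ref{UA:Definition}, this produces a uniformly continuous function $g \in C(X,Y)$ that is a $(\{q\},M)$-approximation of $f$, i.e.\ $g(q) = f(q)$ and $g(M) \subseteq f(M)$.

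Next I would exploit that $g$, being uniformly continuous, is a morphism of ${\bf Unif}$, so the closure operator $C$ satisfies (iii$_u$) for $g$: $g(c_X(M)) \subseteq c_Y(g(M))$. Chasing $q$ through this inclusion gives $f(q) = g(q) \in g(c_X(M)) \subseteq c_Y(g(M))$. Finally, monotonicity (ii) applied to $g(M) \subseteq f(M)$ yields $c_Y(g(M)) \subseteq c_Y(f(M))$, and combining the two gives $f(q) \in c_Y(f(M))$, as required.

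I expect essentially no obstacle here: the whole content is the observation that WUA supplies, at each individual point of $c_X(M)$, a uniformly continuous companion of $f$ that simultaneously agrees with $f$ at that point and does not enlarge the image of $M$, after which the three axioms of a closure operator handle the bookkeeping. The only subtlety worth flagging is that the approximation $g$ depends on the chosen point $q$, so one must re-invoke WUA separately for each $q \in c_X(M)$ rather than hoping for a single global uniformly continuous approximation; this is precisely why the pointwise (single-point $K$) form WUA already suffices and why no appeal to the stronger UA property is needed.
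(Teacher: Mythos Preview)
Your proof is correct and is exactly the argument the paper has in mind: the paper does not give an explicit proof but merely states that the theorem ``easily follows from the definitions,'' and your verification---applying WUA at each $q\in c_X(M)$ to obtain a u.c.\ approximation $g$ with $g(q)=f(q)$ and $g(M)\subseteq f(M)$, then invoking (iii$_u$) for $g$ together with monotonicity (ii)---is precisely that routine check.
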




\subsection{Local view on closure operators}

Every additive and idempotent closure operator of $\bf Top$ or  $\bf Unif$ defines a topology on the underlying set of the space. 
In this sense, the use of topologies that make certain maps (uniformly) continuous in the sequel can be also viewed as a local use
of idempotent additive closure operators (i.e., on a single space or an a single pair of spaces, without the axiom (iii)). 

\subsubsection{Topologies $\tau$  on $\R$ that make a given class  of functions ${\mathcal F}\subseteq \R^\R$ coincide with $C((\R, \tau),(\R, \tau))$. }

In the sequel $X$ will be a metric space. 
 Following \cite{C} we say that a class ${\mathcal  F}$ of functions from $X$ to $Y$ {\em can be topologized} if there exist topologies $\tau_1$ on $X$ and $\tau_2$ on $Y$ such that $\mathcal  F$ coincides with
the class of all continuous functions from $(X, \tau_1)$ to $(Y, \tau_2)$. The paper \cite{C} gives conditions which under GCH (generalized continuum hypothesis) imply that
  ${\mathcal F}$ can be topologized. In particular, it is shown that (assuming GCH) there exists a connected Hausdorff topology $\tau$ on the real line such that the class of all
continuous functions in $\tau$ coincides with the class of all linear  functions. A similar theorem is valid for the class of all polynomials, all analytic functions, and all
harmonic functions. On the other hand, the classes of derivatives, $C^\infty$, differentiable, or Darboux functions cannot be topologized.

\subsubsection{Characterization of uniform continuity as a simple continuity w.r.t. appropriate topologies}

 We recall here the work of Burke \cite{B1} on characterization of uniform continuity as a simple continuity w.r.t. appropriate topologies (or {\bf locally} defined closure operators in the above sense). Within this setting, the problem becomes the question of  determining which metric spaces $X$ and $Y$ are such that the uniformly continuous maps $f\colon X\to Y$ are precisely the continuous maps between $(X,\tau_1)$ and $(Y,\tau_2)$ for some new topologies $\tau_1$ and $\tau_2$ on $X$ and $Y$, respectively. 

\begin{theorem}
There exist a connected closed subset $X$ of the plane, a homeomorphism $h: X\to X$, and a connected Polish topology $\tau$ on $X$ such that the continuous self-maps of $X$ are precisely the maps $h^n$ $(n\in\Z)$ and the constant maps, while the continuous self-maps of $(X,\tau)$ as well as the uniformly continuous self-maps of $X$ are precisely the maps $h^n$ $(n\leq 0)$ and the constant maps.
\end{theorem}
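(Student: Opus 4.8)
The plan is to realize $X$ as a bi-infinite \emph{rigid} chain of rescaled copies of a single planar tile, with $h$ the shift along the chain, tuned so that the Euclidean metric makes $h$ expand without bound in one direction while $h^{-1}$ contracts. First I would fix a compact, connected, planar tile $A$ with two distinguished gluing points $\ell,r\in A$, decorated asymmetrically (for instance an arc carrying finitely many hairs of pairwise distinct lengths) so that $A$ admits no nontrivial self-homeomorphism and none interchanging $\ell$ and $r$. For each $n\in\Z$ let $A_n$ be an affinely rescaled copy of $A$, by a factor $s_n>0$, positioned so that the point $r$ of $A_n$ is identified with the point $\ell$ of $A_{n+1}$. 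I would choose the scales increasing, with $s_{n+1}/s_n\to\infty$ as $n\to+\infty$ and $\sum_{n\le0}s_n<\infty$, so that the negative tail has finite diameter and accumulates at a single point $p_\infty$; then
$$ X \;=\; \{p_\infty\}\cup\bigcup_{n\in\Z}A_n \;\subseteq\; \R^2 $$
is a closed connected subset of the plane, and the shift $h\colon X\to X$ carrying each $A_n$ onto $A_{n+1}$ by the canonical rescaling (and fixing $p_\infty$) is a self-homeomorphism.

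Two computations follow. For the metric claim, note that $h^{-1}$ sends each block onto the \emph{smaller} preceding block, so with the embedding arranged to be non-expansive in $\R^2$ it is $1$-Lipschitz and hence uniformly continuous, as are all $h^{-k}$ $(k\ge0)$; by contrast $h$ has local stretching $s_{n+1}/s_n\to\infty$, so no positive power $h^k$ is uniformly continuous. Thus the uniformly continuous self-maps are exactly $\{h^n:n\le0\}$ together with the constants. For the rigidity claim I would show that every non-constant $f\in C(X,X)$ equals some $h^n$: the gluing points form a discrete sequence of cut points, and the pairwise distinct hair-lengths together with the chirality pin down a combinatorial skeleton (a decorated bi-infinite path) whose only non-collapsing continuous self-maps are the order-preserving shifts, while connectedness and the cut-point structure rule out folds and reflections.

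It remains to produce $\tau$. I would build it as a connected Polish refinement of the metric topology localized at the expanding end, for instance via a complete separable metric $\rho$ agreeing with $d$ on the negative tail and on each fixed bounded region but \emph{accelerated} toward $+\infty$, chosen so that the contraction $h^{-1}$ stays $\rho$-continuous while the expansion $h$ does not (concretely, so that some $\rho$-convergent sequence running out the positive end is carried by $h$ to a non-convergent one, whereas $h^{-1}$ preserves $\rho$-convergence). Since $\rho$ is complete and separable and $X$ stays connected, $\tau$ is connected and Polish; re-running the rigidity argument in $\tau$ (the skeleton is a topological invariant) then yields that the $\tau$-continuous self-maps are again among $\{h^n\}\cup\{\text{const}\}$, now with exactly $n\le0$ surviving, so they coincide with the uniformly continuous self-maps of $X$.

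The main obstacle is the simultaneous calibration in this last step: the single metric $\rho$ (equivalently the topology $\tau$) must at once be complete, separable, keep $X$ connected, preserve continuity of every contraction $h^{-k}$ and of the constants, and destroy continuity of every expansion $h^{k}$ $(k\ge1)$ without accidentally creating new $\tau$-continuous self-maps outside $\{h^n\}\cup\{\text{const}\}$. Getting the quantitative acceleration rate to match exactly the uniform-continuity threshold at $n=0$, while retaining the Polish and connectivity properties, is the delicate heart of the construction; the rigidity bookkeeping for the decorated skeleton is routine but lengthy by comparison.
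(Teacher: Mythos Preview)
The paper does not prove this theorem; it merely records it as a result of Burke \cite{B1} in the context of \S2.2. So there is no ``paper's own proof'' to compare against, and your proposal must stand or fall on its own.

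Your architecture for $X$ and $h$ is reasonable and in the spirit of rigid-continuum constructions: a bi-infinite chain of asymmetrically decorated tiles with a shift, scaled so that $h^{-1}$ contracts and $h$ expands unboundedly. The uniform-continuity dichotomy is plausible once the planar embedding is arranged carefully (you should note that ``$h^{-1}$ is $1$-Lipschitz'' is a statement about the \emph{global} embedding, not just the local scale ratios $s_{n}/s_{n+1}$, since $h^{-1}$ also translates base-points). The rigidity claim---that every non-constant continuous self-map is some $h^n$---follows a known pattern via cut-points, but you should be explicit about why non-injective, non-constant maps are excluded; a continuous self-map need not send cut-points to cut-points, and could a priori collapse a tail to $p_\infty$ while shifting the rest.

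The genuine gap is the construction of $\tau$, which you yourself flag as ``the delicate heart'' but do not carry out. Your sketch is internally inconsistent: you want $\rho$ to be a \emph{complete} metric on the set $X$, yet you also want ``some $\rho$-convergent sequence running out the positive end''. Such a sequence would be $\rho$-Cauchy; by completeness it must converge to a point of $X$, but no point of $X$ can be its limit in any topology in which the tiles $A_n$ remain pairwise separated for large positive $n$ (which you need for the skeleton argument to survive). So either $\rho$ is not complete, or $\tau$ identifies the positive end with some existing point of $X$ (destroying the cut-point structure you rely on), or $\tau$ is not a refinement at all and your transfer of the rigidity argument to $(X,\tau)$ needs a fresh justification. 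In short, the wish-list for $\tau$ is clear, but you have not exhibited a topology meeting it, and the mechanism you hint at does not obviously work. Burke's actual construction in \cite{B1} is where the content lies; absent access to it, you would need to produce $\tau$ explicitly and verify all five requirements simultaneously.
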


\section{Functions with distant fibers and uniform continuity}

\begin{defn} We say that $f \in C(X)$ has {\bf distant fibers} (briefly, {\bf DF}) if any two distinct fibers $f^{-1}(x)$, $f^{-1}(y)$ of $f$ are at some positive distance.
\end{defn}

It is curious to note that this property generalizes two {\em antipodal}  properties of a function:
\begin{itemize}
  \item $f$ is constant ($f$ has {\em one} big fiber)                                        
  \item  $f$ has small fibers (e.g., one-to-one functions, or more generally, functions with compact fibers, or briefly, KF). 
\end{itemize}       

\medskip

\subsection{ Uniform continuity coincides with DF for bounded functions $f:\R^m\to \R$}

It is easy to see that $UC$ implies $DF$ for any function $f:X\to\R$. Indeed, if $d(f^{-1}(u),  f^{-1}(v))=0$ for some 
$u\ne v$ in $\R$, then any pair of sequences $x_n, y_n$ such that  
$f(x_n) = u$, $f(y_n) =v$ and $\lim_n d(x_n, y_n)=0$
witness non-uniform continuity of $f$. 

Let us verify that {\em $DF$ implies $UC$ for bounded functions $f:\R^m\to [0,1]$}. Indeed, assume that the sequences $x_n, y_n$ imply non uniform continuity of $f$
with $d(x_n,y_n)\to 0$ and $|f(x_n)-f(y_n)|\geq \varepsilon$ for every $n$. Then boundedness of $f$ yields that $f(x_n), f(y_n)$ can without loss of generality  be assumed convergent, 
i.e., $(x_n)\to a, f(y_n)\to b$ for some $a\ne b$ in $[0,1]$. 
Let $I_n$ be the segment in $\R^m$ joining $x_n$ and $y_n$. 
Let $a<b$ and take $u,v\in [0,1]$ with $a<v<u<b$. Then $f(x_n)\in [0,v)$ and $ f(y_n)\in (u,1] $
for sufficiently large $n$ since $f(I_n)$ is an interval (being a connected set) containing $f(x_n)$ and $f(y_n)$. Then $u, v \in f(I_n)$ for sufficiently large $n$. Since $d(x_n,y_n)\to 0$, we conclude that $d(f^{-1}(u), f^{-1}(v))=0$, a contradiction. 

In the argument above $\R^m$ can be replaced by any space that is uniformly locally connected. This condition cannot be omitted, since the argument function on the circle minus a point is DF but not UC (indeed,  the circle minus a point is not uniformly locally connected with respect to the metric induced by the plane). 

\begin{theorem}\label{bounded}  {\rm (\cite[Theorem 3.7]{BDP1})} 
A bounded function $f \in C(X)$ on a uniformly locally connected space $X$ 
is u.c. if and only if it is $DF$. \end{theorem}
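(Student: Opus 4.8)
The plan is to establish the two implications separately, noting that only the reverse implication will actually use boundedness and uniform local connectedness. For the direction $UC \Rightarrow DF$, I would argue by contraposition, and this works for an arbitrary real-valued $f$ with no restriction on the space or on boundedness. If two distinct fibers satisfy $d(f^{-1}(u), f^{-1}(v)) = 0$, then I can choose points $x_n \in f^{-1}(u)$ and $y_n \in f^{-1}(v)$ with $d(x_n, y_n) \to 0$; since $|f(x_n) - f(y_n)| = |u - v| > 0$ stays bounded away from zero while the arguments get arbitrarily close, $f$ fails to be uniformly continuous. This is exactly the ``easy'' half already recorded in the text preceding the statement.

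The substance is in $DF \Rightarrow UC$, and here I would assume $f$ is not u.c. and derive a contradiction with $DF$. After rescaling I may assume $f$ maps into $[0,1]$. Non-uniform continuity yields sequences $x_n, y_n$ with $d(x_n, y_n) \to 0$ and $|f(x_n) - f(y_n)| \ge \varepsilon$ for some fixed $\varepsilon > 0$. Boundedness lets me pass to subsequences along which $f(x_n) \to a$ and $f(y_n) \to b$ with $a \ne b$; say $a < b$, and fix values $u, v$ with $a < v < u < b$. The role of uniform local connectedness is then to supply, for each $n$, a connected set $I_n$ containing both $x_n$ and $y_n$ whose diameter I can force to tend to $0$. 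Since $f(I_n)$ is a connected subset of $\R$, hence an interval, and it contains $f(x_n)$ (close to $a$) and $f(y_n)$ (close to $b$), for all large $n$ it must contain the whole segment $[v, u]$. In particular there are $p_n, q_n \in I_n$ with $f(p_n) = u$ and $f(q_n) = v$, i.e. $p_n \in f^{-1}(u)$ and $q_n \in f^{-1}(v)$. Since $\operatorname{diam}(I_n) \to 0$ we get $d(p_n, q_n) \to 0$, whence $d(f^{-1}(u), f^{-1}(v)) = 0$, contradicting $DF$.

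The one step that requires care, and which I expect to be the main obstacle, is producing the connecting sets $I_n$ with $\operatorname{diam}(I_n) \to 0$, since the $ULC$ definition only guarantees, for a prescribed target diameter, a threshold distance below which two points can be joined by a set of that diameter. I would handle this by a refinement argument: for each integer $k$, $ULC$ gives $\delta_k > 0$ so that points within $\delta_k$ lie in a connected set of diameter $< 1/k$; as $d(x_n, y_n) \to 0$, there are indices $N_1 < N_2 < \cdots$ with $d(x_n, y_n) < \delta_k$ for $n \ge N_k$, and choosing $I_n$ of diameter $< 1/k$ whenever $N_k \le n < N_{k+1}$ forces $\operatorname{diam}(I_n) \to 0$. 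This is precisely the place where $ULC$ cannot be omitted: as the text observes, on the punctured circle the argument function is $DF$ but not u.c., because close points on opposite sides of the deleted point cannot be joined by small connected sets. In the model case $X = \R^m$ one simply takes $I_n$ to be the straight segment from $x_n$ to $y_n$, recovering the verification already carried out before the statement; the present proof only replaces those segments by the small connected sets provided by $ULC$.
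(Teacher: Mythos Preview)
Your proof is correct and follows essentially the same route as the paper: the easy direction $UC\Rightarrow DF$ is argued by contraposition exactly as you do, and for $DF\Rightarrow UC$ the paper likewise takes sequences witnessing non-uniform continuity, uses boundedness to make $f(x_n)\to a$, $f(y_n)\to b$, picks $a<v<u<b$, and uses small connected sets $I_n$ (segments in $\R^m$, ULC-provided sets in general) so that $u,v\in f(I_n)$ forces $d(f^{-1}(u),f^{-1}(v))=0$. Your extra care in extracting $I_n$ with $\operatorname{diam}(I_n)\to 0$ from the ULC definition is exactly the step the paper leaves to the reader when it says that $\R^m$ ``can be replaced by any space that is uniformly locally connected''.
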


Boundedness was essential to prove that $DF$ implies uniform continuity. Indeed,  unbounded  continuous functions $\R \to \R$ need not be u.c.
even when they are {\em finite-to-one} (e.g., $x\mapsto x^2$) or even {\em injective} (e.g.,  $x\mapsto x^3$). 

The next theorem says that uniform continuity of a bounded function $f \in C(X)$ is a property of its fibers. In this form the theorem permits one to remove the 
hypothesis ``uniformly locally connected". 

\begin{theorem} \label{samefibers} {\rm (\cite[Theorem 3.10]{BDP1})} Let $(X,d)$ be a   connected and locally connected
metric space. Suppose that $f,g \in C(X, [0,1])$ have the same family of fibers and that $f$ is u.c. Then $g$ is also  u.c. \end{theorem}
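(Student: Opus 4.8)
The plan is to exploit the hypothesis that $f$ and $g$ have the same fibers by factoring $g$ through $f$. Since every fiber $f^{-1}(t)$ coincides with a unique fiber $g^{-1}(s)$, setting $\phi(t)=s$ defines a bijection $\phi\colon f(X)\to g(X)$ with $g=\phi\circ f$; this uses nothing but the equality of the two fiber families. Because $X$ is connected and $f,g$ are continuous, $f(X)$ and $g(X)$ are subintervals of $[0,1]$. The whole theorem then reduces to a single fact about $\phi$, namely that it is a homeomorphism between these two intervals.

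The main step is to prove that $\phi$ is continuous; once this is done, a continuous injection on an interval is automatically strictly monotone, and a strictly monotone surjection onto an interval is a homeomorphism. I would prove continuity at a point $u_0\in f(X)$ as follows. Fix $\epsilon>0$ and put $W=g^{-1}\big((\phi(u_0)-\epsilon,\phi(u_0)+\epsilon)\big)$, an open set containing the fiber $f^{-1}(u_0)=g^{-1}(\phi(u_0))$. For an interior value $u_0$ both $A=\{f<u_0\}$ and $B=\{f>u_0\}$ are nonempty and proper, so by connectedness of $X$ their boundaries are nonempty and lie in $f^{-1}(u_0)$; thus the fiber contains a \emph{left-approachable} point $x_-\in\overline A$ and a \emph{right-approachable} point $x_+\in\overline B$. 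Using local connectedness I would choose a connected open $V_-$ with $x_-\in V_-\subseteq W$; then $f(V_-)$ is a subinterval of $f(X)$ containing $u_0$ together with some value $<u_0$, hence $f(V_-)\supseteq(u_0-\rho,u_0]$ for some $\rho>0$, and on this set $|\phi(u)-\phi(u_0)|<\epsilon$ because $V_-\subseteq W$. Letting $\epsilon\to0$ gives $\lim_{u\to u_0^-}\phi(u)=\phi(u_0)$; the symmetric argument at $x_+$ gives the right limit, and endpoint values of $f(X)$ need only the one available side. Hence $\phi$ is continuous.

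With $\phi$ a monotone homeomorphism, I would finish by contradiction. If $g$ were not u.c., pick $\epsilon>0$ and $x_n,y_n$ with $d(x_n,y_n)\to0$ and $|g(x_n)-g(y_n)|\ge\epsilon$. Since $f$ is u.c., $|f(x_n)-f(y_n)|\to0$, and boundedness lets me pass to a subsequence with $f(x_n),f(y_n)\to\gamma$ and $g(x_n)\to\alpha$, $g(y_n)\to\beta$. Then $\phi(f(x_n))=g(x_n)\to\alpha$ and $\phi(f(y_n))=g(y_n)\to\beta$ with $|\alpha-\beta|\ge\epsilon$. But if $\gamma\in f(X)$ continuity of $\phi$ forces $\alpha=\beta=\phi(\gamma)$, while if $\gamma$ is a non-attained endpoint of $f(X)$ monotonicity guarantees a one-sided limit of $\phi$ at $\gamma$ to which both sequences converge; either way $\alpha=\beta$, a contradiction. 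Therefore $g$ is u.c.

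The delicate point, and the only place where both connectedness and local connectedness are genuinely needed, is the continuity of $\phi$ at interior values whose fiber is ``thick'' (has nonempty interior, or otherwise fails to be approached from both sides at a single point). A naive choice of one point of the fiber can fail --- a level set on which $f$ is locally constant shows that no single point need be two-sided approachable --- so the key is to locate \emph{separate} left- and right-approachable points inside one fiber via the nonemptiness of $\partial A$ and $\partial B$, and then to convert local connectedness into the statement that $f$ sends small connected neighbourhoods onto genuine interval-neighbourhoods of $u_0$. Uniform continuity of $f$ and boundedness enter only in the last paragraph.
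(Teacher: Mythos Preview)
The paper is a survey and does not supply a proof of this theorem; it only quotes the statement from \cite[Theorem 3.10]{BDP1}, so there is nothing in the present text to compare your argument against directly. Your proof is correct: the factorization $g=\phi\circ f$ with $\phi\colon f(X)\to g(X)$ a bijection of intervals is the natural reduction, and your continuity argument for $\phi$ is exactly right---the point you single out (finding \emph{separate} left- and right-approachable points $x_\pm$ in a possibly thick fiber via $\partial\{f<u_0\}$ and $\partial\{f>u_0\}$, then using local connectedness to push small connected neighbourhoods of $x_\pm$ onto one-sided interval neighbourhoods of $u_0$) is precisely the place where both connectedness and local connectedness are needed. The final contradiction, distinguishing the case $\gamma\in f(X)$ (continuity of $\phi$) from the case $\gamma\notin f(X)$ (existence of monotone one-sided limits of $\phi$ at a non-attained endpoint), is also clean and correct.
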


Nevertheless, boundedness cannot be removed, as the pair of functions $x\mapsto x, x\mapsto x^3$ on $\R$ show. It is not clear what is the precise property of the fibres 
of $f\in C(X, [0,1])$ which gives uniform continuity.
Theorem \ref{bounded} shows that it is precisely DF when the space $X$ is uniformly locally connected. 

\bigskip

{\bf How to remove boundedness}

\bigskip

In order to remove  boundedness we now consider  a generalization of UC which  coincides with UC for bounded functions.

We start with a notion which is stronger than DF. A function $f$ is said to be {\bf proper} (briefly, P) if the $f$-preimage of any compact set is compact. 
Equivalently, $f$ is a closed map with KF. Even though in general the implication $P\to KF$ cannot be inverted (e.g., $x\mapsto \arctan x$ in $\R$), one can prove that 
{\em $P=KF$ for unbounded functions $\R^m\to \R$, $m>1$}. In particular, this holds for polynomial functions $\R^m\to \R$. 

The property $KF$ implies $DF$, but it is much stronger. Indeed, $UC$ need not imply $KF$. This is why we introduce 
the auxiliary notion $AP$ that presents a weakening of both the notion of proper function and that of UC function.
We plan to show that $DF = AP$ for functions $f \in C(X)$ on a uniformly locally connected space $X$. 

\begin{defn}{\rm (\cite{BDP1})} $f \in C(X,Y)$ is said to be $AP$ ({\bf almost proper}) if $f$ is u.c. on the $f$-preimage of every compact set. \end{defn}

Obviously, UC implies AP, whereas bounded AP functions are UC. The same argument given above to prove $UC \to DF$ also proves  that $AP$ implies $DF$. On the other hand, 
with the proof of Theorem \ref{bounded}  outlined above one can also show: 

\begin{lemma}\label{DF>AP}{\rm (\cite[Lemma 3.5]{BDP1})} $DF \to AP$ for functions $f \in C(X)$ on a uniformly locally connected space $X$. Hence, 
 $AP$ coincides with $DF$ on uniformly locally connected spaces. \end{lemma}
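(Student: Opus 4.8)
The plan is to prove the substantive direction $DF \to AP$ by contradiction, reusing almost verbatim the argument sketched for Theorem~\ref{bounded}, with the single observation that on the preimage of a compact set the function is automatically bounded. So fix a compact $K \subseteq \R$ and suppose, for contradiction, that $f$ fails to be u.c. on $A := f^{-1}(K)$. Then there exist $\eps > 0$ and sequences $(x_n), (y_n)$ in $A$ with $d(x_n, y_n) \to 0$ and $|f(x_n) - f(y_n)| \geq \eps$ for all $n$. Since $f(x_n), f(y_n) \in K$ and $K$ is compact, I may pass to subsequences so that $f(x_n) \to a$ and $f(y_n) \to b$ with $a, b \in K$; the gap condition forces $a \neq b$, say $a < b$.

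Next I would exploit uniform local connectedness exactly as the straight segments $I_n$ were used in $\R^m$. For each $k$, ULC furnishes $\delta_k > 0$ such that any two points at distance $< \delta_k$ lie in a connected set of diameter $< 1/k$; since $d(x_n, y_n) \to 0$, a diagonal choice produces connected sets $C_n \ni x_n, y_n$ with $\mathrm{diam}(C_n) \to 0$. Because $f$ is continuous, each image $f(C_n)$ is a connected subset of $\R$, hence an interval containing both $f(x_n)$ and $f(y_n)$. Now fix two values $u, v$ with $a < v < u < b$. For all large $n$ we have $f(x_n) < v < u < f(y_n)$, so $u, v \in f(C_n)$, and therefore there are points $p_n, q_n \in C_n$ with $f(p_n) = v$ and $f(q_n) = u$.

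This yields the contradiction: $u \neq v$ are fixed distinct reals, so $f^{-1}(u)$ and $f^{-1}(v)$ are distinct nonempty fibers of $f$, yet $d(p_n, q_n) \leq \mathrm{diam}(C_n) \to 0$ with $p_n \in f^{-1}(v)$ and $q_n \in f^{-1}(u)$ forces $d(f^{-1}(u), f^{-1}(v)) = 0$, contradicting $DF$. Hence $f$ is u.c. on every $f^{-1}(K)$, i.e., $f$ is $AP$.

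The crucial point, and the only place where anything beyond the bounded argument is needed, is recognizing that $AP$ asks merely for uniform continuity on each $f^{-1}(K)$, on which the range of $f$ lies in the compact (hence bounded) set $K$; this is precisely what lets the convergent-subsequence step go through without any global boundedness of $f$. I would also stress that the auxiliary values $u, v$ need not lie in $K$, so the points $p_n, q_n$ may leave $A$, but this causes no difficulty, since $DF$ is a property of \emph{all} fibers of $f$ in $X$. The main (mild) obstacle is verifying that the connecting sets can be chosen with diameters tending to $0$, which is exactly the content of ULC and is handled by the diagonal argument above. Finally, combined with the already-noted implication $AP \to DF$ (proved by the same argument as $UC \to DF$), this establishes that $AP$ and $DF$ coincide on uniformly locally connected spaces.
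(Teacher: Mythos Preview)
Your proof is correct and follows essentially the same approach as the paper, which explicitly states that ``with the proof of Theorem~\ref{bounded} outlined above one can also show'' this lemma. Your key observation---that on $f^{-1}(K)$ the values of $f$ lie in the compact set $K$, so the convergent-subsequence step from the bounded case goes through---is exactly the point, and your replacement of the straight segments $I_n$ by small connected sets furnished by ULC is precisely what the paper indicates when it remarks that $\R^m$ can be replaced by any uniformly locally connected space.
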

 
 In this way we have achieved our goal by replacing  UC with  AP. 
 
 Next we discuss an alternative solution, based on a different idea of choosing instead of AP  a class of functions close to UC in the sense of approximation, namely $UA$ (and $WUA$).

\begin{theorem}{\rm (\cite[Theorem 3.15]{BDP1})}\label{Thh}
DF implies  UA in uniformly locally connected spaces.
\end{theorem}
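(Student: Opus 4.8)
The plan is to realize the required $(K,M)$-approximation as a \emph{truncation} of $f$ to a compact interval, performed in the target $\R$ rather than in the domain (as was done in the domain in Example \ref{ex1}). Since UA is immediate for bounded functions — a bounded DF function on a ULC space is already u.c. by Theorem \ref{bounded}, so $g=f$ serves as its own approximation — the whole content lies in producing, for an unbounded $f$, a \emph{bounded} u.c. function that still agrees with $f$ on $K$ and keeps $g(M)\subseteq f(M)$.

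Concretely, fix a compact $K$ and an arbitrary $M\subseteq X$; assume $K\ne\emptyset$ and set $\alpha=\min_K f$, $\beta=\max_K f$ (attained since $f(K)$ is compact). I would choose truncation levels $u^-<\alpha\le\beta<u^+$ and put $g=\max(u^-,\min(f,u^+))$, the truncation of $f$ to $[u^-,u^+]$; this is continuous, bounded, and equal to $f$ on $K$ because $f(K)\subseteq(u^-,u^+)$. The levels must be chosen so that truncating never moves a point of $M$ to a value outside $f(M)$: if $\sup_M f>\beta$ I pick $m^+\in M$ with $f(m^+)>\beta$ and set $u^+=f(m^+)$, while if $\sup_M f\le\beta$ I simply take $u^+=\beta+1$, which exceeds every value of $f$ on $M$; the level $u^-$ is chosen symmetrically. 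With this choice every $m\in M$ satisfies $g(m)\in\{f(m),u^+,u^-\}\subseteq f(M)$, so indeed $g(M)\subseteq f(M)$.

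It remains to see that $g$ is uniformly continuous, and for this I would verify that $g$ is DF and then invoke Theorem \ref{bounded} (as $g$ is bounded and $X$ is ULC). The interior fibers of $g$ are exactly the fibers $f^{-1}(c)$ with $u^-<c<u^+$, whereas the two extremal fibers are the tails $g^{-1}(u^+)=f^{-1}([u^+,\infty))$ and $g^{-1}(u^-)=f^{-1}((-\infty,u^-])$. Pairs of interior fibers are at positive distance because $f$ is DF. To separate a tail from another fiber one argues by contradiction: if, say, $d(f^{-1}(c),f^{-1}([u^+,\infty)))=0$ for some $c<u^+$, choose sequences $x_n\in f^{-1}(c)$ and $y_n$ with $f(y_n)\ge u^+$ and $d(x_n,y_n)\to0$; uniform local connectedness yields connected sets $C_n\ni x_n,y_n$ of diameter tending to $0$, and since $f(C_n)$ is an interval containing $c$ and a value $\ge u^+$, it contains any fixed $w\in(c,u^+)$. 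The resulting points $z_n\in C_n$ with $f(z_n)=w$ satisfy $d(x_n,z_n)\to0$, forcing $d(f^{-1}(c),f^{-1}(w))=0$ with $c\ne w$, contradicting DF of $f$. The same intermediate-value argument, applied to two distinct interior values, separates the two tails from each other.

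Thus $g$ is DF, hence u.c.\ by Theorem \ref{bounded}, and is the desired $(K,M)$-approximation, so $f$ is UA. \emph{The main obstacle} is the tension between the two competing requirements: boundedness (needed to apply Theorem \ref{bounded}) versus the image constraint $g(M)\subseteq f(M)$; this is resolved precisely by anchoring the truncation levels at genuine $f$-values of points of $M$ whenever the range of $f$ on $M$ reaches beyond $f(K)$. The step I expect to require the most care is the proof that truncation preserves DF, since it is there that uniform local connectedness, together with the connectedness (intermediate value property) of $f$ on small connected sets, is genuinely used.
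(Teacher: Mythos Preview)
Your proposal is correct and follows essentially the same route as the paper: truncate $f$ in the target to $[u^-,u^+]$ with $f(K)\subseteq[u^-,u^+]$, anchor the levels $u^\pm$ at values in $f(M)$ whenever $f(M)$ extends beyond $f(K)$ so that $g(M)\subseteq f(M)$, then show the truncation is still DF (hence u.c.\ by Theorem~\ref{bounded}). The paper states this more tersely (``one proves that $f\in DF$ implies $f_{(a,b)}\in DF$ \ldots\ if one takes additional care about $g(M)\subseteq f(M)$ as in Example~1''), but your detailed verification of DF for the truncation via the ULC/intermediate-value argument is exactly the intended content.
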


The proof is based on the notion of {\bf truncation}, which was implicit in Example \ref{ex1}. Now we define a different kind of  truncation (for the general definition see Definition \ref{def_truncation}).  For a function $f:X\to \R$ and real numbers $a\leq b$ define the $(a,b)$-{\bf truncation} as follows:

 $$f_{(a,b)}(x)=\begin{cases}f(x),\mbox{ if }f(x)\in [a,b] \\
 a, \mbox{ if } f( x)\leq a     \\
                b \mbox{ if }f(x)\geq b 
                \end{cases}.$$

One proves that $f\in DF$ implies $f_{(a,b)}\in DF$, hence $f_{(a,b)}\in UC$ since it is  bounded. Now, if $K$ is a compact set and $a, b\in \R$ are chosen such that $f(K)\subseteq [a,b]$, then $f_{(a,b)}$ is a $(K, M)$-approximation of $f$ if one takes additional care about $g(M) \subseteq f(M)$ as in Example 1. 

The above implication cannot be inverted, as the next example shows. 

\begin{example}\label{sin_x}
The function $f(x)=\sin x^2$ is not DF, since any two fibers of $f$ are at distance 0. Nevertheless, according to Example  \ref{ex1}, $f(x)$ is UA. 
\end{example}

 This suggests that the condition DF is too strong. We shall consider
an appropriate weaker version below. 

\begin{rem} Note that the $(a,b)$-truncation is different from the truncation $g$ defined in Example
\ref{ex1} in the case $X=\R$. It can  easily be shown that if $f([m_1,m_2])=[a,b]$, then  $g$ is a truncation of $f_{(a,b)}$. 
\end{rem}

\subsection{Distant connected components of fibers}


In order to invert the implication in Theorem \ref{Thh} we need a weaker form of DF. To this end we take a closer look at the {\em connectedness} structure of the fibers. 
First of all we recall a notion for continuous maps in $\bf Top$. 

\begin{defn} A continuous maps $f: X \to Y$ between topological spaces is called 
\begin{itemize}
\item[(a)] {\em monotone} if all fibers of $f$ are connected;
\item[(b)]  {\em light} if all fibers of $f$ are totally disconnected. 
\end{itemize}
\end{defn}

The term in item (a) was motivated by the fact that for maps $\R \to \R$ one obtains the usual monotone maps. 
It is known that every continuous map $f: X \to Y$ between topological spaces can be factorized as $f= l\circ m$, where $m: X\to Z$ is monotone and $l: Z \to Y$ is light. 
(Notice that this factorization for the constant map $f: X \to Y=\{y\}$ provides as $Z$ exactly the space of connected  components of $X$ and $m: X \to Z$ is the quotient
map having as fibers the connected components of $X$.)

\begin{defn}
We say that $f$ has {\bf ``distant connected components of fibers''} ({\bf DCF}) in the sense that any two components of distinct fibers are at positive distance. 
\end{defn}

\begin{example} Let $X$ be a metric space and  $f: X \to \R$ a continuous map. 
\begin{itemize}
\item[(a)] if $f$ is monotone, then $f$ is DCF if and only if $f$ is DF. 
\item[(b)] If $f$ is light, then it is DCF.   
\end{itemize}
In particular, the function $f: (0,1/\pi] \to \R$ defined by $f(x) = \sin 1/x$, as well as the function from Example \ref{sin_x}, are DCF (being light), but any two non-empty fibers of $f$ are at distance 0. 
\end{example}

\begin{theorem}{\rm (\cite[Theorem 4.3]{BDP1})} UA implies DCF for $f\in C(X)$ and arbitrary metric spaces $X$. 
\end{theorem}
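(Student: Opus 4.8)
The plan is to argue by contraposition: I will show that a function which fails DCF cannot be UA. So suppose $f \in C(X)$ is not DCF. Then there are distinct values $u \neq v$ in $\R$, a connected component $C_1$ of the fiber $f^{-1}(u)$, and a connected component $C_2$ of the fiber $f^{-1}(v)$, with $d(C_1,C_2)=0$. Since this distance is the infimum of $d$ over $C_1\times C_2$, I can fix sequences $x_n\in C_1$ and $y_n\in C_2$ with $d(x_n,y_n)\to 0$. Note that $f\equiv u$ on $C_1$ and $f\equiv v$ on $C_2$, because each $C_i$ is contained in a single fiber.

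Next I would produce one specific pair $(K,M)$ witnessing the failure of the UA condition from Definition~\ref{UA:Definition}. Choose any $p\in C_1$ and $q\in C_2$, and set $K=\{p,q\}$, which is compact (indeed finite), and $M = C_1\cup C_2$, so that $f(M)=\{u,v\}$. Suppose, toward a contradiction, that some u.c. function $g$ is a $(K,M)$-approximation of $f$. The defining requirements give $g(p)=f(p)=u$, $g(q)=f(q)=v$, and $g(M)\subseteq f(M)=\{u,v\}$.

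The key step is to upgrade these two anchored point-values into constancy on the whole components by exploiting connectedness. Since $C_1$ is connected and $g$ is continuous, $g(C_1)$ is a connected subset of $\{u,v\}$; as $\{u,v\}\subseteq\R$ is a two-point, hence discrete, set, $g(C_1)$ must be a singleton, and because $u=g(p)\in g(C_1)$ we conclude that $g\equiv u$ on $C_1$. The identical argument applied to $q\in C_2$ yields $g\equiv v$ on $C_2$. In particular $g(x_n)=u$ and $g(y_n)=v$ for every $n$, so $|g(x_n)-g(y_n)|=|u-v|>0$ while $d(x_n,y_n)\to 0$, which contradicts the uniform continuity of $g$. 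Hence no u.c. $(K,M)$-approximation of $f$ exists, so $f$ is not UA; this is exactly the contrapositive of the claim.

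I expect the main subtlety—rather than a genuine obstacle—to lie in the choice of the witnessing pair: one must notice that a finite $K=\{p,q\}$ already anchors both components simultaneously, and that the constraint $g(M)\subseteq f(M)$ combined with the connectedness of $C_1$ and $C_2$ is precisely what promotes the two pinned values into the full constancy statements $g\equiv u$ on $C_1$ and $g\equiv v$ on $C_2$. Everything after that is a direct clash with uniform continuity along the sequences $x_n,y_n$. It is worth remarking that, unlike Theorem~\ref{Thh}, this argument requires no uniform local connectedness of $X$, in agreement with the statement being asserted for arbitrary metric spaces.
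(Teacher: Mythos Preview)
Your proof is correct and follows essentially the same approach as the paper: the paper's one-line sketch chooses $K=\{a,b\}$ (one point in each component) and $M=C_a\cup C_b$ and asserts that no $(K,M)$-approximation exists, which is exactly what you establish. Your write-up simply makes explicit the connectedness argument forcing $g$ to be constant on each $C_i$ and the subsequent clash with uniform continuity along the sequences $x_n,y_n$.
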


In fact, if $C_a$ and $C_b$ are two connected components of fibres of $f$ at distance 0, then for $K=\{a,b\}$ and $M=C_a\cup C_b$  the function $f$ has no $(K,M)$-approximation. 

Along with Example  \ref{ex1} this gives: 

\begin{cor}\label{new}
Every continuous real-valued function $\R \to \R$ is DCF. 
\end{cor}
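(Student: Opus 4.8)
The plan is to read the corollary as an immediate consequence of the two facts already established just above it, exactly as the sentence ``Along with Example \ref{ex1} this gives'' suggests. First I would invoke Example \ref{ex1}, which asserts that every $f\in C(\R)$ is UA. Then I would apply the preceding theorem that UA implies DCF for $f\in C(X)$ on an arbitrary metric space $X$, specialized to $X=\R$. Chaining these two implications yields that every continuous $f\colon\R\to\R$ is DCF, which is precisely the assertion. Along this route there is essentially no obstacle to overcome: all the work has already been invested in proving that real-valued functions on $\R$ are UA and that UA entails DCF, so the corollary is a one-line composition.

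It is worth recording a self-contained direct argument as well, since it exhibits why the real line is special (recall that the analogous statement fails on $\R^2$, by Burke's example $f(x,y)=xy$). Let $f\colon\R\to\R$ be continuous and let $C_u$, $C_v$ be connected components of two distinct fibers $f^{-1}(u)$, $f^{-1}(v)$ with $u\ne v$. Since the fibers are closed, so are their components $C_u$, $C_v$; being connected subsets of $\R$ they are closed intervals (possibly degenerate or unbounded), and they are disjoint because $u\ne v$. The claim then reduces to the elementary lemma that two disjoint closed intervals of $\R$ are at positive distance.

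To prove that lemma I would exploit the total order on $\R$: two disjoint intervals are linearly separated, so without loss of generality $s:=\sup C_u\le \inf C_v=:t$. After disposing of the trivial and unbounded cases, $s$ and $t$ are finite, and a finite supremum (infimum) of a closed interval is attained, so $s\in C_u$ and $t\in C_v$. If $s=t$ this common point would lie in both $C_u$ and $C_v$, contradicting disjointness; hence $s<t$ and $d(C_u,C_v)\ge t-s>0$. The only step needing a little care is the bookkeeping for unbounded intervals, and this is exactly the place where one-dimensionality is used, which is why the conclusion is automatic on $\R$ but not on higher-dimensional spaces.
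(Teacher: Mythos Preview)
Your first paragraph is exactly the paper's proof: the corollary is stated immediately after the theorem that UA implies DCF on arbitrary metric spaces, and the phrase ``Along with Example~\ref{ex1} this gives'' indicates that one simply composes that theorem with the fact that every $f\in C(\R)$ is UA. Your supplementary direct argument is correct as well (connected components of closed fibers in $\R$ are disjoint closed intervals, hence at positive distance by the order structure), though the paper does not include it.
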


Actually, we shall see below that even $WUA$ implies $DCF$ for $f\in C(\R^m)$ (see Theorem \ref{*}), therefore,

$$
DF\to UA\to WUA \to DCF \mbox{ for }f\in C(\R^m).
$$


Hence all they coincide for polynomial functions (or functions with finitely many  connected components of fibres). Let us put all these implications in the following diagram, 
where the equivalence (1) for ULS spaces is given by Lemma \ref{DF>AP} and the implication (2) for ULS spaces is given by Theorem 6. The implication (3) for $\R^n$ follows from these two implications
and the trivial implication $UA 	\to WUA$. The implication (4) will be proved in Theorem \ref{*} below which gives a much stronger result. 

\vskip-190pt

\begin{center}
\begin{picture}(362,100)
\put(80,30){\vector(0,1){35}}
\put(80,58){\vector(0,-1){35}}
\put(80,-15){\vector(0,1){25}}
\put(98,72){\vector(1,0){40}}
\put(95,17){\vector(1,0){45}}
\put(40,17){\vector(1,0){30}}
\put(15,5){\makebox(30,15)[t]{KF}}

\put(174,72){\vector(1,0){38}}

\put(154,60){\vector(0,-1){35}}
\put(154,25){\vector(0,1){35}}
\put(140,5){\makebox(30,15)[t]{UA}}
\put(105,-2){\makebox(30,15)[t]{(2)}}
\put(105,55){\makebox(30,15)[t]{(3)}}
\put(175,55){\makebox(30,15)[t]{(4)}}  
\put(140,60){\makebox(30,15)[t]{WUA}}
\put(65,60){\makebox(30,15)[t]{AP}}
\put(65,5){\makebox(30,15)[t]{DF}}
\put(65,-35){\makebox(30,15)[t]{UC}}
\put(75,35){\makebox(30,15)[t]{(1)}}
\put(210,60){\makebox(30,15)[t]{DCF}}
\end{picture}
\end{center}

\vskip47pt

\begin{center}
Diagram 1
\end{center}

\vskip15pt

We shall see below that (4) is not an equivalence.  This motivated the introduction of the following weaker version of $UA$ in \cite{CD2}: a function $f\colon X\to\reals$ is 
said to be $UA_d$ ({\em densely uniformly approachable}) if it admits uniform  $\la K,M\ra$-approximations for every {\em dense\/} set $M$ and for every compact set $K$. 
Analogously, one can define $WUA_d$.

\begin{theorem}\label{*} 
$WUA_d$ coincides with $DCF$ for $f\in C(\R^m)$. 
\end{theorem}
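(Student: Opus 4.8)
The plan is to prove the two inclusions $WUA_d\sq DCF$ and $DCF\sq WUA_d$ separately, using as the main engine Theorem \ref{bounded}: on the uniformly locally connected space $\R^m$ a \emph{bounded} continuous function is u.c. precisely when it is $DF$. A reduction useful for both directions is the observation that when $K=\{p\}$ is a single point, the constant map $g\equiv f(p)$ is already a $\la K,M\ra$-approximation as soon as $f(p)\in f(M)$; hence the only interesting case is $f(p)\notin f(M)$, i.e. the fibre $f^{-1}(f(p))$ misses the dense set $M$ (so in particular it is nowhere dense).

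For $WUA_d\Rightarrow DCF$ I would argue by contraposition, in the spirit of the proof that $UA$ implies $DCF$. Assume $f$ is not $DCF$: there are connected components $C_u\sq f^{-1}(u)$ and $C_v\sq f^{-1}(v)$ of distinct fibres ($u<v$) with $d(C_u,C_v)=0$, witnessed by $x_n\in C_u$, $y_n\in C_v$ with $d(x_n,y_n)\to 0$. The difficulty, compared with that earlier proof, is that there one could put the \emph{two} points $a\in C_u$, $b\in C_v$ into $K$ and take $M=C_u\cup C_v$, whereas now $K$ must be a single point and $M$ must be dense, so $f(M)$ is automatically dense in the range of $f$ and cannot be pinned to the two values $u,v$. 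To compensate I would fix $p\in C_u$, put $K=\{p\}$, and build a dense $M$ that avoids the (nowhere dense) fibres through $u$ and $v$ — so that neither the constant nor, more generally, any truncation $f_{(a,b)}$ survives as an approximation — and then use the uniform local connectedness of $\R^m$ to join $x_n$ to $y_n$ by connected sets of vanishing diameter crossing a regular intermediate level $f^{-1}(w)$, $u<w<v$. The goal of this step is to force any u.c. $g$ with $g(M)\sq f(M)$ to keep $g(x_n)$ and $g(y_n)$ a fixed distance apart, contradicting uniform continuity along $d(x_n,y_n)\to0$.

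The converse $DCF\Rightarrow WUA_d$ is where I expect the real work to lie. Fix $K=\{p\}$ and a dense $M$; by the reduction we may assume $c:=f(p)\notin f(M)$. The obvious candidate is the truncation $g=f_{(a,b)}$ with $a,b\in f(M)$ and $a<c<b$, for which $g(p)=c$ and $g(M)\sq f(M)$ hold for free (the flattened values $a,b$ lie in $f(M)$, the unflattened ones are values of $f$ on $M$). The catch is uniform continuity: $g$ is bounded, so by Theorem \ref{bounded} it is u.c. iff it is $DF$, and a merely $DCF$ function need not have $DF$ truncations — distinct whole fibres $f^{-1}(t),f^{-1}(t')$ can still approach one another even when every pair of their components is at positive distance. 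A global truncation is therefore, in general, not u.c., and this is exactly the gap between $DF$ and $DCF$ that must be bridged.

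My plan to bridge it is to replace the global truncation by a \emph{component-sensitive} modification of $f$. Exploiting that $g$ only has to take values in $f(M)$ on the \emph{dense} set $M$ (and to equal $c$ at the single point $p$), one reassigns $f$ to locally constant values drawn from $f(M)$ on the regions where equal-valued components of nearby fibres would otherwise destroy uniform continuity, interpolating continuously in between; density of $M$ guarantees enough admissible values near every level to do this while preserving $g(M)\sq f(M)$. Here the restriction to $\R^m$ is used: on the preimage of each compact interval only finitely many components are relevant, so the per-pair separation furnished by $DCF$ upgrades to genuine uniform separation on bounded pieces, and via Theorem \ref{bounded} the modified map becomes $DF$ on those pieces and globally u.c. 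Carrying out this surgery so that the resulting $g$ is simultaneously continuous, uniformly continuous, value-constrained to $f(M)$, and equal to $c$ at $p$ — that is, turning the local finiteness of relevant components into a uniform continuity estimate — is the main obstacle of the proof.
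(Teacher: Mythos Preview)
Your approach diverges from the paper's in organization, and both directions have gaps worth naming.

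The paper does not attack $DCF\Rightarrow WUA_d$ directly. It introduces the intermediate class $TUA$ of functions admitting, for every compact $K$, a u.c.\ truncation in the general sense of Definition~\ref{def_truncation} (constant on each connected component of the set where it differs from $f$). The cycle is then closed as $DCF\Rightarrow TUA\Rightarrow UA_d\Rightarrow WUA_d\Rightarrow DCF$, the three nontrivial steps being quoted from \cite{BDP1} and \cite{CD2}. Your ``component-sensitive modification'' is exactly this general truncation in disguise, so in spirit you are heading toward $TUA$; but the assertion that ``on the preimage of each compact interval only finitely many components are relevant'' is not justified and is in fact false in general (already $f(x,y)=\sin x^2$ has infinitely many fibre components over any nondegenerate interval). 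Producing a u.c.\ truncation from $DCF$ alone is precisely the content of Step~1 ($DCF\Rightarrow TUA$, \cite[Corollary~7.4]{BDP1}), and it is the nontrivial heart of the matter; your sketch does not supply the mechanism that replaces the missing uniform lower bound on distances between components.

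For $WUA_d\Rightarrow DCF$ your plan has a more basic gap. In the $UA\Rightarrow DCF$ argument one takes $K=\{a,b\}$ with $a\in C_u$, $b\in C_v$ and $M=C_u\cup C_v$; then $f(M)=\{u,v\}$ and connectedness forces $g\equiv u$ on $C_u$, $g\equiv v$ on $C_v$, contradicting uniform continuity along $x_n,y_n$. With a \emph{single} point $K=\{p\}$ you can pin only one value of $g$, and with \emph{dense} $M$ the set $f(M)$ is automatically dense in the range of $f$, so the constraint $g(M)\subseteq f(M)$ is very weak. Your proposed witnesses $x_n\in C_u$, $y_n\in C_v$ lie outside $M$ (you chose $M$ to avoid those fibres), so nothing forces $g(x_n)$ and $g(y_n)$ apart; a u.c.\ $g$ could simply be nearly constant near infinity and still satisfy $g(p)=f(p)$ and $g(M)\subseteq f(M)$. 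There is also a side issue: the fibre $f^{-1}(u)$ need not be nowhere dense even when $C_u$ and $C_v$ are at distance $0$ (take $f(x,y)=xy$ for $x>0$ and $f\equiv 0$ for $x\le 0$), so the reduction ``$f(p)\notin f(M)$'' may be unavailable for the specific $p\in C_u$ you chose. The paper's Step~3 (\cite{BDP1}, \cite[Corollary~4.2]{CD2}) handles this direction by a different construction of $M$; your sketch would need a substantially new idea here.
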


The proof requires a new form of weak UC based on truncations:

\begin{defn}\label{def_truncation} $g \in C(X)$ is a {\bf truncation} of $f \in C(X)$ if the space $X$ can be partitioned in two parts $X = A \cup B$ so 
that $g = f$ on $A$ and $g$ is constant on each connected component of $B$ (that is, $g$ must be constant on each connected component of 
$\{x\in X: f(x) \neq g(x)\}$). 
\end{defn}

This motivates the introduction of the class $TUA$ of {\em truncation-$UA$\/} functions, that is, functions $f \in C(X)$ such that for every compact set $K \subseteq X$ there is a u.c.
truncation $g$ of $f$ which coincides with $f$ on $K$.

The following is easy to prove:

\begin{theorem}\label{TUA>DCF}
{\rm (\cite{BDP1})} TUA implies DCF on every locally connected space. 
\end{theorem}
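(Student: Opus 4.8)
The plan is to argue by contraposition: assuming $f$ is not DCF, I will produce a single compact set $K$ for which no uniformly continuous truncation of $f$ can agree with $f$ on $K$, thereby showing $f$ is not TUA. So suppose $f$ is not DCF. Then there are connected components $C_a$ of a fibre $f^{-1}(r)$ and $C_b$ of a fibre $f^{-1}(s)$, with $r\neq s$ and $d(C_a,C_b)=0$; fix points $a\in C_a$, $b\in C_b$, and (from $d(C_a,C_b)=0$) sequences $x_n\in C_a$, $y_n\in C_b$ with $d(x_n,y_n)\to 0$. Put $K=\{a,b\}$ and suppose, for contradiction, that $g$ is a uniformly continuous truncation of $f$ with $g\restriction_K=f\restriction_K$, so that $g(a)=r$ and $g(b)=s$.

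The heart of the argument is to show that $g\equiv r$ on all of $C_a$ (and, symmetrically, $g\equiv s$ on all of $C_b$). First I would record that the set $D=\{x\in X: f(x)\neq g(x)\}$ is open, since $f$ and $g$ are continuous, and that by the definition of a truncation $g$ is constant on each connected component of $D$. Here is where local connectedness enters: each connected component $U$ of the open set $D$ is itself open, hence clopen in $D$, which forces $\partial U\cap D=\emptyset$, i.e. $\partial U\subseteq X\setminus D$, the set on which $g=f$. Writing $c_U$ for the constant value of $g$ on $U$, continuity of $g$ gives $g\equiv c_U$ on $\overline U$, so for every boundary point $w\in\partial U$ we obtain $f(w)=g(w)=c_U$.

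The main step is then: every component $U$ of $D$ meeting $C_a$ satisfies $c_U=r$. Indeed, $C_a\cap U$ is nonempty and open in $C_a$; it cannot be all of $C_a$, because $a\in C_a\setminus D$; and since $C_a$ is connected, $C_a\cap U$ is not closed in $C_a$, so it has a boundary point $w\in C_a$ lying in $\overline U\setminus U\subseteq\partial U$. As $w\in C_a$ we have $f(w)=r$, while the previous step gives $f(w)=c_U$, whence $c_U=r$. Combined with the fact that $g=f=r$ on $C_a\setminus D$, this yields $g\equiv r$ on $C_a$, and symmetrically $g\equiv s$ on $C_b$. I expect this to be the crux of the proof, precisely because it is where local connectedness is indispensable: without it the components of $D$ need not be open, the boundary identification $\partial U\subseteq X\setminus D$ can fail, and $g$ could take a value other than $r$ inside $C_a$.

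Finally I would extract the contradiction from uniform continuity. For the chosen sequences, $|g(x_n)-g(y_n)|=|r-s|>0$ for every $n$, whereas $d(x_n,y_n)\to 0$; this contradicts the uniform continuity of $g$. Hence no uniformly continuous truncation of $f$ agrees with $f$ on $K$, so $f$ is not TUA, which establishes the contrapositive and completes the proof.
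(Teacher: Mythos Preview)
Your proof is correct and follows exactly the approach sketched in the paper: argue by contraposition, take $K=\{a,b\}$ with $a\in C_a$, $b\in C_b$ where $C_a,C_b$ are components of distinct fibres at distance zero, and show that no uniformly continuous truncation of $f$ can agree with $f$ on $K$. You have simply supplied the details behind the paper's one-line hint, in particular making explicit the role of local connectedness in ensuring that the components of $\{f\neq g\}$ are open and hence have boundaries on which $f=g$, which is precisely what pins down the value of $g$ on all of $C_a$ and $C_b$.
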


Indeed, if $C_a$ and $C_b$ are two connected components of fibres of $f$ at distance 0, then for $K=\{a,b\}$ the function $f$ has no UC $K$-truncations. 

The proof of Theorem \ref{*} splits in three steps (see in Diagram 2 below)

\begin{itemize}
   \item {\bf Step 1:} (\cite[Corollary 7.4]{BDP1}) $DCF\to TUA$  for $f\in C(\R^m)$
   \item {\bf Step 2:} (\cite[Theorem 3.1]{CD2}) $TUA \to UA_d$  for $f\in C(\R^m)$
   \item {\bf Step 3:} (\cite{BDP1}, \cite[Corollary 4.2]{CD2}) $WUA_d\to DCF$ for $f\in C(\R^m)$. 
\end{itemize}

Step 1 and Theorem \ref{TUA>DCF} ensure the equivalence (4) for  $f\in C(\R^m)$ in Diagram 2. 
Step 2, the trivial implication $UA_d \to WUA_d$, Step 3 and  the equivalence (4) imply (5). 
This proves all four equivalences for  $f\in C(\R^m)$ in the right square of Diagram 2. 

The remaining three implications (1), (2) and (3) are  trivial.  

\vskip-65pt

\begin{center}
\begin{picture}(362,100)
\put(80,30){\vector(0,1){35}}
\put(98,72){\vector(1,0){40}}
\put(95,17){\vector(1,0){45}}

\put(174,72){\vector(1,0){38}}
\put(172,17){\vector(1,0){38}}

\put(154,60){\vector(0,-1){35}}
\put(154,25){\vector(0,1){35}}
\put(210,72){\vector(-1,0){37}}
\put(210,17){\vector(-1,0){40}}
\put(220,30){\vector(0,1){30}}
\put(220,60){\vector(0,-1){30}}
\put(140,5){\makebox(30,15)[t]{UA$_d$}}
\put(105,-2){\makebox(30,15)[t]{(2)}}
\put(105,55){\makebox(30,15)[t]{(3)}} 
\put(175,-2){\makebox(30,15)[t]{(5)}} 
\put(140,60){\makebox(30,15)[t]{WUA$_d$}}
\put(65,60){\makebox(30,15)[t]{WUA}}
\put(65,5){\makebox(30,15)[t]{UA}}
\put(75,35){\makebox(30,15)[t]{(1)}}
\put(215,35){\makebox(30,15)[t]{(4)}}
\put(210,60){\makebox(30,15)[t]{DCF}}
\put(212,5){\makebox(20,15)[t]{TUA}}
\end{picture}
\end{center}

\vskip7pt

\begin{center}
Diagram 2
\end{center}

\vskip15pt


In view of the four equivalences in the right square of  Diagram 2, the next example shows that the implications (2) and (3) cannot be inverted. 

\begin{example}{\rm (\cite[\S 5]{CD2})}
In $C(\R^2)$, TUA does not imply WUA.
\end{example}

It remains unclear whether the remaining last implication (1) of  Diagram 2 can be  inverted for $f\in C(\R^m)$ (see Problem \ref{UA>WUA}). 

\section{$UA$ spaces} 

 The main objective of this section are the {\bf  UA spaces} -- spaces where every continuous function is UA. 
  The first example of this kind is $\R$ (Example \ref{ex1}). The motivation to introduce these spaces are the well known  Atsuji spaces.

Here we recall some results from \cite{BD1} and we anticipate some of the pricipal results from \cite{BDP2} which give further motivation for studying UA functions.

The next definition will be used in the sequel. 

\begin{defn} \label{separated} Two subsets $A, B$ of a topological space $X$ are said to be {\bf separated} if the closure of each of them does not meet  the other (this is equivalent to saying that $A$ and $B$ are clopen in $A \cup B$). So $X$ is connected if and only if  it cannot be partitioned in two separated sets. 
 
A subset $S$ of $X$ {\bf separates} the nonempty sets $A$ and $B$ if the complement of $S$ can be partitioned in two separated sets, one of which contains $A$, the other contains $B$ (see \cite[\S 16, VI]{K}). 
\end{defn}

\subsection{UA spaces}

Several criteria for UA-ness are given, among them the following looks most spectacular: 

\begin{theorem}\label{ostacolo} {\rm (\cite{BDP2})} Let $X$ be a $UA$ space and let $A, B$ be disjoint closed uniformly connected subsets of $X$. Then there is a collection $\{H_n \mid n \in \N\}$ of nonempty closed subsets of $X$ such that for every $n$, 
\begin{enumerate}
\item $H_{n+1} \subseteq H_n$;
\item $H_n$ separates $A$ and $B$; and  
\item $H_n$ is contained in a finite union of balls of diameter $< 1/n$. 
\end{enumerate}
\end{theorem}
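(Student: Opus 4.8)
The plan is to build, from the pair $(A,B)$, a single uniformly continuous function whose level sets already separate $A$ from $B$, and then to use the hypothesis that \emph{every} continuous function on $X$ is $UA$ to force these level sets to be small. The construction of the function is the clean part; the smallness is the crux.

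First I would fix points $a\in A$, $b\in B$ and form the Urysohn function $f(x)=d(x,A)/(d(x,A)+d(x,B))$, which is well defined and continuous because $A,B$ are disjoint and closed, with $f\equiv 0$ on $A$ and $f\equiv 1$ on $B$. Since $X$ is a $UA$ space, $f$ is $UA$, so for the compact set $K=\{a,b\}$ and the set $M=A\cup B$ there is a u.c. function $g$ with $g=f$ on $K$ and $g(M)\subseteq f(M)=\{0,1\}$; truncating into $[0,1]$ I may assume $g\colon X\to[0,1]$. Here the uniform connectedness of $A$ and $B$ is decisive: a uniformly continuous image of a uniformly connected set is uniformly connected, so $g(A)$ is a uniformly connected subset of the uniformly disconnected two-point set $\{0,1\}$ containing $g(a)=0$, whence $g(A)=\{0\}$, and likewise $g(B)=\{1\}$. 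Thus $g$ is u.c. with $g\equiv 0$ on $A$ and $g\equiv 1$ on $B$. Consequently every slab $g^{-1}([s,t])$ with $0<s<t<1$ is closed and separates $A$ and $B$ in the sense of Definition \ref{separated}, its complement being the disjoint union of the separated open sets $\{g<s\}\supseteq A$ and $\{g>t\}\supseteq B$; these slabs are nested as $[s,t]$ shrinks. This yields candidates for a decreasing sequence satisfying conditions (1) and (2), and reduces everything to the smallness condition (3).

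The hard part is condition (3): producing, at each scale $n$, a separator covered by finitely many balls of diameter $<1/n$, compatibly with the nesting. This is exactly where the full strength of ``$X$ is a $UA$ space'' (not just the $UA$-ness of $f$) must enter, because a level set of a u.c. function need not be finitely chunked: on $\R^2$ the $1$-Lipschitz map $(x,y)\mapsto x$ has the whole line $\{1/2\}\times\R$ as a level set, and indeed $\R^2$ is not a $UA$ space. I would establish the scale-$n$ smallness by contradiction. If no closed separator of $A$ and $B$ could be covered by finitely many balls of diameter $<1/n$, then every separator would contain an infinite $(1/n)$-separated subset, and the goal is to parlay this unavoidable large-scale spread of \emph{all} separators into a single continuous function on $X$ two of whose fibres have connected components lying at distance $0$. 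Such a function fails $DCF$, contradicting the theorem that $UA$ implies $DCF$ and hence contradicting that $X$ is a $UA$ space. Manufacturing this pair of mutually approaching fibre-components out of the large-scale pieces of the separators is the main obstacle; note that the obstruction cannot be $g$ itself, since $g$ being u.c. already forces distinct fibres to be at positive distance, so a genuinely new auxiliary function (playing the role of $xy$ on $\R^2$) must be built. This is precisely the step that distinguishes $UA$ spaces such as $\R$ from non-$UA$ spaces such as $\R^2$.

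Finally I would assemble the scale-$n$ separators into a decreasing sequence by an inductive refinement: at stage $n+1$ I would run the scale-$(n+1)$ construction so as to locate the finer separator inside the region already cut out by $H_n$, guaranteeing $H_{n+1}\subseteq H_n$. The delicate point throughout is to thin a separator without destroying the property of separating $A$ from $B$ (mere intersection of two separators need not separate), so that conditions (1), (2) and (3) hold simultaneously. In the favourable case where the smallness argument actually yields a single totally bounded separator $H$, the sequence collapses: one takes $H_n=H$ for all $n$, and (3) holds because a totally bounded set is, for each $n$, covered by finitely many balls of diameter $<1/n$.
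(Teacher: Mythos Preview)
Your setup for conditions (1) and (2) is correct: applying $UA$ to the Urysohn function with $K=\{a,b\}$ and $M=A\cup B$, then using uniform connectedness of $A$ and $B$ to force $g(A)=\{0\}$ and $g(B)=\{1\}$, is the natural move and yields nested closed separators $g^{-1}([s,t])$. This part is fine.

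The genuine gap is condition~(3), and your own text concedes it. You propose to derive a contradiction by building a continuous non-DCF function from the hypothesis that at some fixed scale every separator of $A$ and $B$ contains an infinite uniformly separated set, but you give no construction; you write only that ``manufacturing this pair of mutually approaching fibre-components \ldots\ is the main obstacle''. That is not a sketch with a minor omission---it is the entire content of the hard step. Two concrete difficulties: the failed hypothesis concerns \emph{sizes} of separators while DCF concerns \emph{distances between connected pieces of fibres}, and you supply no bridge; and, as you yourself observe, the u.c.\ function $g$ is automatically DCF, so a genuinely new function must be produced from scratch, with no indication how. The final ``inductive refinement'' has the same character: you note that intersecting separators need not separate, but give no mechanism for achieving the nesting while preserving (2) and (3) simultaneously.

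The paper does not prove the theorem here (it is quoted from [BDP2]), but it gives a pointed hint: the statement actually holds for the larger class of \emph{thin} spaces, via Theorem~\ref{UA>Thin} ($UA\Rightarrow$ thin) and the garland characterization of thinness (Proposition~\ref{new_prop}). This strongly suggests that the correct target for a contradiction is not a non-DCF function but a \emph{garland}: from the assumption that no separator at scale $1/n$ is covered by finitely many balls of diameter $<1/n$, one should extract $1/n$-chains $L_n$ from $a$ to $b$ and an open set $V$ separating $a$ from $b$ such that $V\cap\bigcup_n L_n$ is closed discrete, contradicting thinness. Your DCF route, even if it could be made to work, would not recover this stronger statement, and as it stands it does not establish condition~(3) at all.
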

 
 Actually, this property can be proved for a larger class of spaces discussed in \S 6, where a relevant property
 is obtained in the case when $X$ is complete (Theorem \ref{main}).


The following construction which produces UA spaces from trees and compact sets placed at their vertices was given in \cite{BDP1}. 

\begin{defn}\label{tree}  A metric space $X$ is a {\em tree of compact sets $\{K_n : n \in \omega\}$} if $X = \bigcup_{n \in \omega}K_n$ where each $K_n$ is compact and $|K_{n+1} \cap \bigcup_{i \leq n}K_i|=1$.
\begin{itemize}  
\item[(a)]
 Given 
 a subset $I\subseteq \omega$, we say that  the subspace $X_I= \bigcup_{n \in I} K_n$ of $X = \bigcup_{i \in \omega} K_i$  is a {\em subtree} of $X$ if for every $n, m$ with $n<m$, if $n \in I$ and $K_n \cap K_m \neq \emptyset$, then $m \in I$.   
\item[(b)] 
 A tree of compact sets $X = \bigcup_{i \in \omega} K_i$ is said to be {\em tame} if every $K_i$ has an open neighbourhood which intersects only finitely many $K_j$'s  and every two disjoint subtrees of $X$ are at a distance $>0$.
 \end{itemize}
\end{defn}

It is easy to see that the circle minus a point can be represented as a tree of compact sets, but none of these  trees is tame. The next theorem shows the reason for that
(the circle minus a point is not a UA space).  

\begin{theorem} {\cite{BDP2}}
 If $X = \bigcup_{i\in \omega} K_i$  is a tame tree of compact sets $\{K_n : n \in \omega\}$, then $X$ is $UA$.
\end{theorem}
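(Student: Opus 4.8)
The plan is to prove that every $f \in C(X)$ is UA by an explicit truncation construction: reduce the non-compact $X$ to a compact core, and tame the behaviour on the hanging branches using the two ingredients of tameness — local finiteness of the $K_i$ and positive distance between disjoint subtrees.

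First I would fix $f \in C(X)$, a compact set $K \subseteq X$, and an arbitrary $M \subseteq X$. Since each $K_i$ has a neighbourhood meeting only finitely many $K_j$, compactness of $K$ forces $K$ to meet only finitely many $K_i$; hence $K \subseteq Y_0 := \bigcup_{i \le N} K_i$ for some $N$, and $Y_0$ is compact. Every $K_j$ with $j>N$ attaches to $Y_0$ along its unique chain of ancestors through a single piece, so it belongs to exactly one \emph{first-level branch} $T_m$, namely the subtree of all descendants of the unique index $m>N$ whose attaching point $p_m$ lies in $Y_0$. Thus $X = Y_0 \cup \bigcup_m T_m$, each $T_m$ is itself a tame tree of compact sets rooted at $p_m$, and (using local finiteness, which makes each $T_m$ closed) $T_m \cap Y_0 = \{p_m\}$.

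Next I would define $g$ to coincide with $f$ on $Y_0$ and, on each $T_m$, to be a uniformly continuous modification $g_m$ with $g_m(p_m)=f(p_m)$ (so the pieces glue continuously) and $g_m(M \cap T_m) \subseteq f(M)$. If $M \cap T_m = \emptyset$, take $g_m \equiv f(p_m)$. Otherwise I perform a truncation in the spirit of Example \ref{ex1}: keep $g_m = f$ on a subtree of $T_m$ reaching a chosen point $w_m \in M \cap T_m$, and set $g_m$ equal to the junction value on each component hanging off this subtree, \emph{cutting at points of $M$} so that every constant component meeting $M$ carries a value $f(w)$ with $w \in M$. This guarantees $g(M) \subseteq f(M)$, and $g$ is continuous at every junction. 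For the global uniform continuity the decisive tool is the positive-distance hypothesis applied not to single branches but to \emph{unions} of them: any union of subtrees is again a subtree, so splitting the branches into two families yields two disjoint subtrees, hence at positive distance. A short two-colouring argument then upgrades the pairwise positive distances into a single uniform gap $\rho := \inf_{m \ne m'} d(T_m, T_{m'}) > 0$ (if $d(T_{m_k},T_{m'_k})\to 0$ along distinct pairs, two-colour the resulting graph so that infinitely many of these pairs are split between the two colour classes, contradicting tameness for the corresponding pair of unions). Given $\varepsilon>0$, one then picks $\delta<\rho$ so small that two points within $\delta$ cannot lie in distinct branches; the cases ``both in $Y_0$'', ``both in one $T_m$'', and ``one in $Y_0$, one in $T_m$'' are dispatched respectively by uniform continuity of $f$ on the compact $Y_0$, by that of $g_m$, and by closedness of $T_m$ forcing a branch point near $Y_0$ to be near $p_m$.

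The hard part is exactly the uniform continuity \emph{across} the infinitely many branches: a single $\delta$ must serve all the $g_m$ at once. The gap $\rho$ removes all interaction between distinct branches, so what remains is to make the family $\{g_m\}$ equicontinuous. I expect this to be the main obstacle, and I would resolve it by choosing each cut point $w_m$ as a (near-)closest point of $M \cap T_m$ to $p_m$ and calibrating the transition scale of each truncation to $|f(w_m)-f(p_m)|$, so that every $g_m$ is, say, $1$-Lipschitz off $Y_0$; the one subtlety to watch is keeping the $M$-points of a transition zone out of any interpolated (non-$f$, non-constant) region, which is where continuity of $f$ near $p_m$ and the freedom in choosing $w_m$ must be used with care. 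Once the truncations are equicontinuous and the uniform gap $\rho$ is in hand, assembling the global modulus $\delta$ and concluding that $g$ is a uniformly continuous $(K,M)$-approximation of $f$ is routine.
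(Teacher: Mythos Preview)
This survey does not contain a proof of the theorem; it merely cites the original source. So there is no in-paper argument to compare against, and I assess your proposal directly.

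Your decomposition $X = Y_0 \cup \bigcup_m T_m$ is the right start, but you have missed a direct consequence of the first tameness clause: each $K_i$ with $i \le N$ has an open neighbourhood meeting only finitely many $K_j$, and every first-level root $K_m$ contains $p_m \in Y_0$, so only \emph{finitely many} first-level branches $T_m$ can exist. Your two-colouring argument for a uniform gap $\rho$ and the whole discussion of equicontinuity of the family $\{g_m\}$ therefore attack a non-problem --- with finitely many branches a common $\delta$ is obtained simply by taking a minimum.

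This observation does not, however, make the proof trivial; it merely pushes the difficulty down one level. Each $T_m$ is itself an infinite tame tree, and you must still build a uniformly continuous $g_m$ on it with $g_m(p_m)=f(p_m)$ and $g_m(M\cap T_m)\subseteq f(M)$. Here your sketch has a real gap. The truncation ``cutting at points of $M$'' is a recursion with no termination argument: whenever a hanging subtree meets $M$ but its junction point lies outside $M$, you must extend the core further, and for suitable $M$ (say, one point on each infinite ray of $T_m$, placed at unbounded depth) this process never stops, so no finite core with the required property exists. Your alternative --- a Lipschitz transition near $p_m$ --- is in direct tension with the constraint $g_m(M)\subseteq f(M)$: if $M$ is dense near $p_m$ then no interpolation region avoids $M$, and bare continuity of $f$ at $p_m$ gives no Lipschitz control anyway. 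The sentence ``continuity of $f$ near $p_m$ and the freedom in choosing $w_m$ must be used with care'' names the obstacle without overcoming it. So the heart of the argument --- producing the single-branch approximation $g_m$ --- is not established by your proposal.
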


Examples of tame trees are given in Figure 1 (see ladders B and C). 

\subsection{Non-UA spaces: Hedghogs and some necessary conditions}

Let $\alpha$ be a cardinal. In the sequel $H_\alpha$ denotes the {\em hedghog} with $\alpha$ spikes (see \cite[Example 4.1.5]{E},  note that $H_a$ is separable if and only if $a=\omega$). 

 Recall the definition of the cardinal $\mathfrak b$ as the  minimal cardinality of an unbounded family of functions $f: \omega\to\omega$ with respect to  the partial {\it pre}order  $f\leq^* g$  if $ f(n)\leq g(n)$  for all but finite number  $n\in \omega$ (see \cite{vD}). In ZFC $\omega_1\leq \mathfrak b \leq 2^\omega$,  and $\mathfrak b = 2^\omega$ consistently (for  example under  MA or CH, see \cite{vD} for more detail). 
 
 Surprisingly, one has the following independency result: ZFC cannot decide whether the smallest non-separable hedghog $H_{\omega_1}$ is  UA. More precisely, the following holds: 
 
\begin{theorem}\label{Hedg} {\rm \cite{BDP2}}
Let $\alpha$ be a cardinal. If $\alpha<\mathfrak b$ then $H_\alpha$ is UA, whereas if $\alpha\geq \mathfrak b$ then $H_\alpha$ is not even $WUA$.
\end{theorem}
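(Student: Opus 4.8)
The plan is to convert the statement into a domination problem in $(\omega^\omega,\leq^*)$ and treat the two implications separately. First I would fix coordinates: write the center as $o$ and each spike as a copy $S_i\cong[0,1]$ ($i<\alpha$) glued at $o$, so that a map $f\in C(H_\alpha)$ is exactly a family $(f_i)_{i<\alpha}$ with $f_i\in C([0,1])$, common value $f_i(0)=f(o)$, subject only to continuity at $o$. Since $d(x,o)$ is just the spike-parameter of $x$, that continuity is the \emph{uniform} equicontinuity of $(f_i)$ at $0$; and because $d((i,s),(j,t))=s+t$ for $i\neq j$, one checks that $f$ is u.c.\ iff the whole family $(f_i)$ is uniformly equicontinuous on $[0,1]$. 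Encoding each modulus as $\phi_i\in\omega^\omega$ (say $\phi_i(n)=$ the least $k$ for which $f_i$ has oscillation $<1/n$ on every interval of length $1/k$), this says: $f$ is u.c.\ iff some single $\psi$ lies pointwise above all $\phi_i$. The whole theorem then reduces to the slogan that a $(K,M)$-approximation exists iff the \emph{relevant} moduli of $f$ can be $\leq^*$-dominated by one function, which is possible for $<\mathfrak b$ of them and can be blocked for $\geq\mathfrak b$.

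For $\alpha<\mathfrak b\Rightarrow$ UA, I would argue as follows. Given $f$, a compact $K$ and an arbitrary $M$, note first that $f\restriction_K$ is automatically u.c.\ (continuous on a compact set), with a modulus $\omega_K$ that controls $f$ on $K\cap S_i$ uniformly in $i$. Since $\{\phi_i:i<\alpha\}$ has size $<\mathfrak b$, it is $\leq^*$-dominated by one $\psi$, so for each $i$ only finitely many scales $n<N_i$ are ``bad''. Compactness of $K$ forces $K\cap S_i$ into a shrinking neighbourhood of $o$ for all but finitely many $i$. On the finitely many exceptional spikes I keep $g=f$ (finitely many u.c.\ functions, hence a common modulus); on every other spike I define $g$ as a u.c.\ \emph{truncation} of $f$ that agrees with $f$ on $K\cap S_i$ and is flattened on the components of $S_i\setminus K$ responsible for the non-uniform modulus, choosing each plateau value inside $f(M)$ exactly in the spirit of Example~\ref{ex1}. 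The point is that the global modulus of the resulting $g$ is governed only by $\omega_K$ and $\psi$, so $g$ is u.c., equals $f$ on $K$, and satisfies $g(M)\subseteq f(M)$; hence $f$ is UA.

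For $\alpha\geq\mathfrak b\Rightarrow$ not WUA, I would fix a $\leq^*$-unbounded increasing family $\{h_i:i<\mathfrak b\}\subseteq\omega^\omega$ and build one $f\in C(H_\alpha)$. On $S_i$, at scale $n$ (parameter near $2^{-n}$) I insert an oscillation of \emph{amplitude} $\eta_n=2^{-n}$ made of $h_i(n)$ teeth of width $\sim 2^{-n}/h_i(n)$, arranged around values descending from the common center value $f(o)=1$; the amplitudes $\eta_n\to 0$ give equicontinuity at $o$, so $f$ is continuous (but not u.c., since $\phi_i(n)\sim 2^nh_i(n)$). I take the anchor $p=o$ and choose $M$ dense in the peak and valley subintervals of each scale, engineered so that $f(M)$ is a \emph{gapped}, Cantor-like set that \emph{omits} the value $1$. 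Omitting $1$ kills every constant approximation; density of $M$ on each block together with connectedness confines any continuous $g$ with $g(M)\subseteq f(M)$ to a single value-band on each block, and tracing the bands outward from $g(o)=1$ forces $g$ to change by $\sim\eta_n$ across the width $\sim 1/h_i(n)$ joining a peak block to its neighbouring valley block, at every sufficiently large scale. Reading the inverse modulus of such a u.c.\ $g$ at the levels $\eta_n=2^{-n}$ yields a single $\widetilde\mu\in\omega^\omega$ with $h_i\leq^*\widetilde\mu$ for all $i<\mathfrak b$, contradicting $\leq^*$-unboundedness; so no u.c.\ $(p,M)$-approximation exists and $f$ is not WUA.

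The main obstacle is the middle step of this last construction: turning the \emph{image} condition $g(M)\subseteq f(M)$ into a genuine \emph{pointwise} lower bound on the oscillation of $g$. Two ingredients are delicate: (i) arranging the values so that $f(M)$ really has a gap at every scale—peak blocks must cluster near the omitted value $1$ while valley blocks sit on a sparse decreasing sequence, so that the connected image of $g$ on each block is trapped in one band; and (ii) showing that the band selected on each block is determined by the anchor through connectedness, so that $g$ is \emph{obliged} to cross the gap on the thin connecting edges. It is precisely here that eventual (rather than everywhere) domination surfaces: the forcing only operates at the large scales near $o$, giving $h_i\leq^*\widetilde\mu$ instead of $h_i\leq\widetilde\mu$, which is exactly why the threshold is $\mathfrak b$ and not $\aleph_0$. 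The positive direction mirrors this: the truncation can always spoil finitely many scales per spike, which is why $\leq^*$-domination, guaranteed below $\mathfrak b$, is the right and sufficient hypothesis.
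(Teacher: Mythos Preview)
The paper is a survey and does not itself prove Theorem~\ref{Hedg}; it merely cites the result from the source \cite{BDP2}. So there is no proof in this text to compare against. Your overall strategy---reducing both implications to a domination problem in $(\omega^\omega,\leq^*)$---is the natural framework and is presumably what the original does.

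That said, the positive direction has a genuine gap. You dominate the full moduli $\phi_i$ of the spike functions $f_i$ by a single $\psi$ in the $\leq^*$-order, but this yields only $\phi_i(n)\leq\psi(n)$ for $n\geq N_i$, with thresholds $N_i$ that may be unbounded across $i<\alpha$. Any truncation $g_i$ that keeps $g_i=f_i$ on a nondegenerate subinterval inherits $f_i$'s modulus there, so at the bad scales $n<N_i$ you get no control from $\psi$; and since for each fixed $n$ there may be uncountably many $i$ with $N_i>n$, you cannot repair this by taking a pointwise supremum. Your sentence ``the global modulus of the resulting $g$ is governed only by $\omega_K$ and $\psi$'' is therefore unjustified, and ``flattened on the components \ldots\ responsible for the non-uniform modulus'' is too vague to carry weight (bad modulus is a property of the function on an interval, not a local feature one can excise while keeping continuity and plateau values in $f(M)$). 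The family in $\omega^\omega$ you need to dominate is not the raw $\phi_i$ but an object that encodes, for each spike, the obstruction that $M$ places on choosing a truncation point close to $K\cap S_i$; the compactness of $K$ (so $\sup(K\cap S_i)\to 0$) and the equicontinuity of the $f_i$ at $0$ must then be used to convert $\leq^*$-domination of \emph{that} family into an honest uniform modulus for $g$. This is where the real work is, and it is missing from the sketch.

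Your negative direction is in better shape: the construction from a $\leq^*$-unbounded family and the gap-crossing idea are right, and you correctly isolate the delicate step. One point to tighten: with $M$ dense only on the peak/valley blocks, density plus $g(M)\subseteq f(M)$ pins $g(\text{block})$ to \emph{some} component of $\overline{f(M)}$, but not necessarily the one you want (peak on peak blocks, valley on valley blocks); nothing yet prevents $g$ from sitting in the valley band throughout. You need to arrange $f(M)$ and the anchor so that starting from $g(o)=1$ the band choice really is forced block-by-block---for instance by making the band gaps large relative to the connector lengths at each scale, so that continuity across a connector cannot jump bands without producing the required oscillation.
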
 
 
 In particular, under CH the space $H_{\omega_1}$ is not UA, while in models of ZFC where 
 $\neg CH \& MA$ holds, one has ${\omega_1}<\mathfrak b$, so $H_{\omega_1}$ is $UA$. 

We show below that every space $H_{\alpha}$ is $TUA$ (Corollary \ref{corSTUA}). Hence $TUA\not \to WUA$ for nonseparable spaces. 

We  also  consider the following space which is more general than  the hedghog $H_\alpha$ of $\alpha$ spikes:

\begin{defn}\label{star} 
Let $\alpha$ be an infinite cardinal. A metric space $(X, d)$  is called a {\em hedghog of compact sets
 $\{K_\lambda :  \lambda \in \alpha\}$} if $X =\bigcup_{\lambda \in \alpha}K_\lambda$
where each $K_\lambda$  is compact with more than one point and there exists $p\in X$ such that 
\begin{enumerate}
     \item  $K_{\lambda} \cap K_{\lambda'}=\{p\}$ for $\lambda \neq \lambda'$; and 
     \item $  \forall x,y \in X [ d(x,y)<\max\{ d(x,p), d(y,p)\}\to   \exists \lambda<\alpha [x, y \in K_\lambda]]$.
\end{enumerate}
 
 Sometimes we prefer to say more precisely: {\em a hedghog of $\alpha$ compact sets}.

 \end{defn}

\begin{defn}\label{min_truncation} Let $X$ be a uniform space, let $f\in C(X)$ and let $K$ be a compact subset  of the space $X$. The {\it minimal $K$-truncation} $f_K$ of $f$ is defined  as the $(\inf _Kf, sup _Kf)$-truncation of $f$.
\end{defn}


\begin{theorem}\label{STUA} {\rm  \cite{BDP2}} Let $X$ be a hedghog of compact sets and  $K$ be a compact subset of $X$ containing $p$.  Then for every continuous function  $f\in C(X)$ 
the $(\inf _Kf, sup _Kf)$-truncation of  $f$  is u.c. 
\end{theorem}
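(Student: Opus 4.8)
The plan is to set $a=\inf_K f$ and $b=\sup_K f$ and to write $g=f_{(a,b)}$ for the truncation in question. Since $p\in K$ we have $a\le f(p)\le b$, so $g(p)=f(p)$ and $g$ is a \emph{bounded} continuous function with range in $[a,b]$. To prove that $g$ is u.c. I would fix $\varepsilon>0$ and look for a single $\delta>0$ that works for all pairs, exploiting the defining inequality in condition (2) of Definition \ref{star} to split an arbitrary pair $x,y$ with $d(x,y)<\delta$ into two regimes according to whether $d(x,y)$ is smaller or larger than $\max\{d(x,p),d(y,p)\}$.

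The easy regime is handled first. By continuity of $g$ at the center $p$, pick $\delta_0>0$ with $|g(z)-g(p)|<\varepsilon/2$ whenever $d(z,p)<\delta_0$. Take $\delta\le\delta_0$ and suppose $d(x,y)<\delta$. If $d(x,y)\ge\max\{d(x,p),d(y,p)\}$, then $d(x,p),d(y,p)<\delta_0$, so both $g(x)$ and $g(y)$ lie within $\varepsilon/2$ of $g(p)$ and hence $|g(x)-g(y)|<\varepsilon$. This disposes of all pairs that are close \emph{relative to the center}. The remaining pairs satisfy $d(x,y)<\max\{d(x,p),d(y,p)\}$, so by condition (2) they lie in a common spike $K_\lambda$.

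The heart of the matter is therefore the co-spike case, and this is the step I expect to be the main obstacle: although $g$ restricted to any single compact spike $K_\lambda$ is automatically u.c., I must produce a modulus of continuity that is \emph{independent of} $\lambda$. Here the truncation is essential. One route is to show that $g=f_{(a,b)}$ has distant fibers (DF): the clamping to $[a,b]$ collapses precisely the part of each spike on which $f$ leaves $[a,b]$, and one must verify that no two distinct fibers of $g$ can approach at distance $0$. Granting DF, the conclusion would follow from Theorem \ref{bounded}, since a hedgehog with connected spikes is uniformly locally connected (nearby points are joined either inside one spike or through $p$ by a small star) and $g$ is bounded, so DF upgrades to u.c.

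To keep the argument self-contained and to avoid any appeal to local connectedness of the spikes, I would instead run the co-spike case by contradiction. A failure of uniform continuity yields sequences $x_n,y_n\in K_{\lambda_n}$ with $d(x_n,y_n)\to 0$ but $|g(x_n)-g(y_n)|\ge\varepsilon$; since $g$ is continuous on each compact spike, no single spike can host infinitely many such pairs, so after passing to a subsequence the spikes $\lambda_n$ are pairwise distinct, and by condition (2) the points $x_n$ are then uniformly bounded away from $p$ and $c$-separated from one another. The contradiction has to be extracted from the way the choice $a=\inf_K f$, $b=\sup_K f$ with $p\in K$ forces the $g$-values of this separated, distinct-spike family to interact through the level $f(p)$ at the center. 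I expect the delicate point to be exactly this final reduction—showing that such a configuration is incompatible with $g$ being the \emph{minimal} $K$-truncation—and it is precisely where one uses that the truncation levels are read off from a compact set containing $p$ rather than chosen arbitrarily.
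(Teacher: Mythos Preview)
The paper gives no proof here; it merely cites [BDP2]. Your plan is sound up to the step you yourself flag as delicate---extracting a contradiction from a separated family $x_n,y_n$ in pairwise distinct spikes with $d(x_n,y_n)\to 0$ and $|g(x_n)-g(y_n)|\ge\varepsilon$---and that step cannot be completed, because the statement as recorded in this survey is false. On the standard hedgehog $H_\omega$ (spikes $[0,1]$, center $p$) set $f(t,n)=t^n$; this is continuous since $0\le t^n\le t$. With $K=\{p,(1,1)\}$ one has $\inf_K f=0$ and $\sup_K f=1$, but $f$ already has range in $[0,1]$, so the $(0,1)$-truncation is $f$ itself, and $f$ is not u.c.: for $x_n=(1,n)$ and $y_n=(1-1/n,n)$ one has $d(x_n,y_n)=1/n$ while $|f(x_n)-f(y_n)|=1-(1-1/n)^n\to 1-e^{-1}$. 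Your alternative DF route fails on the same example: $H_\omega$ is ULC and $g=f$ is bounded, yet the fibers $f^{-1}(1)$ and $f^{-1}(e^{-1})$ are at distance $0$, so $g$ is not DF and Theorem~\ref{bounded} does not apply.

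Thus the gap you isolated is genuine and cannot be closed for the theorem as stated; presumably the version in [BDP2] carries an additional hypothesis or a different formulation. Note that Corollary~\ref{corSTUA} (every hedgehog of compact sets is TUA) is not threatened by this, but the u.c.\ truncation agreeing with $f$ on $K$ has to be produced by a genuinely spike-dependent construction---for instance, keeping $f$ on finitely many spikes and freezing the rest---rather than by a single global $(a,b)$-clamp.
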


This gives the following: 

\begin{cor}\label{corSTUA}  Every hedghog of compact sets  is $TUA$. \end{cor}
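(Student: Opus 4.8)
The plan is to reduce the corollary to Theorem \ref{STUA} by a single elementary maneuver: enlarging an arbitrary compact set so that it contains the central point $p$. Fix a hedghog of compact sets $X$ with central point $p$ as in Definition \ref{star}, and let $f \in C(X)$ be arbitrary. To show that $f$ is $TUA$, I must produce, for each compact $K \subseteq X$, a u.c. truncation of $f$ agreeing with $f$ on $K$.

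First I would set $K' = K \cup \{p\}$. This is again compact and now contains $p$, so Theorem \ref{STUA} applies and guarantees that the minimal $K'$-truncation $g := f_{K'}$ (Definition \ref{min_truncation}) --- that is, the $(\inf_{K'} f, \sup_{K'} f)$-truncation of $f$ --- is u.c. It then remains to verify two things: that $g$ is a truncation of $f$ in the sense of Definition \ref{def_truncation}, and that $g$ coincides with $f$ on the original set $K$.

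The second point is immediate. Writing $a = \inf_{K'} f$ and $b = \sup_{K'} f$, the inclusion $K \subseteq K'$ gives $a \leq f(x) \leq b$ for every $x \in K$, so $f(x) \in [a,b]$ and hence $g(x) = f_{(a,b)}(x) = f(x)$ there. For the first point, the set where $g$ differs from $f$ is $\{x : f(x) < a\} \cup \{x : f(x) > b\}$, a union of two \emph{disjoint} open sets (by continuity of $f$) on which $g$ takes the constant values $a$ and $b$ respectively. Since any connected component of this set lies entirely in one of the two disjoint open pieces, $g$ is constant on each such component, which is precisely the requirement of Definition \ref{def_truncation}.

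This verification is routine, so the genuine content sits entirely in Theorem \ref{STUA}; the only step deserving attention is the passage from the formula defining the $(a,b)$-truncation to the formal notion of truncation, where one must confirm the component-wise constancy just described. Since $f \in C(X)$ and the compact set $K$ were arbitrary, this shows that every continuous function on $X$ is $TUA$, i.e. $X$ is a $TUA$ space, as claimed.
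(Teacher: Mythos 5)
Your proposal is correct and is essentially the paper's own (implicit) argument: the corollary is stated as an immediate consequence of Theorem \ref{STUA}, and the intended reduction is exactly your enlargement $K' = K\cup\{p\}$ followed by the observation that the minimal $K'$-truncation agrees with $f$ on $K$ and is a truncation in the sense of Definition \ref{def_truncation}. Your explicit check that the $(a,b)$-truncation is constant on each component of $\{x : f(x)\neq g(x)\}$ is the right routine verification and matches what the paper leaves to the reader.
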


Another source of UA spaces is given by the following: 

 \begin{theorem} {\rm  \cite{BDP2}} Every uniformly zero-dimensional space is UA. \end{theorem}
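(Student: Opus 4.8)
The plan is to use the characteristic feature of a uniformly zero-dimensional metric space $X$: for every $\eps>0$ there is a \emph{uniform clopen partition} of $X$ into pieces of diameter $<\eps$, i.e.\ a partition whose distinct members lie at distance $\ge\delta$ for some $\delta=\delta(\eps)>0$. Passing to common refinements, I fix a refining sequence $\mathcal P_1,\mathcal P_2,\dots$ of such partitions, where each member of $\mathcal P_n$ is clopen of diameter $<1/n$ and distinct members of $\mathcal P_n$ are at distance $\ge\delta_n>0$. The guiding principle is that any function which is constant on each member of $\mathcal P_n$ is automatically u.c.\ (two points at distance $<\delta_n$ share a member and so receive the same value); thus step functions form a large reservoir of u.c.\ maps here, and the task is to squeeze an approximation of a given $f$ out of this reservoir.

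So fix $f\in C(X)$, a compact $K$ and an arbitrary $M\sq X$. I first record that, since $K$ is totally bounded while distinct members of $\mathcal P_n$ are $\ge\delta_n$ apart, $K$ meets only finitely many members of each $\mathcal P_n$. I then define the $(K,M)$-approximation $g$ by a ``peeling'' process sharpening towards $K$: any $x\notin K$ satisfies $d(x,K)>0$, so since the meshes tend to $0$ there is a first level $n$ at which the $\mathcal P_n$-piece $P(x)$ containing $x$ is disjoint from $K$; I let $g$ be the constant $f(m_P)$ on $P(x)$, choosing the reference point $m_P\in P(x)$ inside $M$ whenever $P(x)\cap M\ne\emptyset$. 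The points left unresolved are exactly those lying in $\mathcal P_n$-pieces meeting $K$ for every $n$; as the meshes shrink these nested pieces contract to points of the closed set $K$, so on this remainder, which is precisely $K$, I set $g=f$.

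Three of the required properties are then immediate. We have $g=f$ on $K$ by construction, and $g(M)\sq f(M)$ because each $x\in M$ either lies in $K$, giving $g(x)=f(x)\in f(M)$, or lies in a peeled piece $P$ with $x\in P\cap M\ne\emptyset$, giving $g(x)=f(m_P)\in f(M)$. Continuity of $g$ is clear off $K$, where $g$ is locally constant on clopen pieces; and at a point $a\in K$ it follows from continuity of $f$, since points approaching $a$ are peeled at ever higher levels, so their reference points $m_P$ converge to $a$ and $f(m_P)\to f(a)$.

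The main obstacle is the uniform continuity of $g$, which must reconcile two regimes. Far from $K$, $g$ is constant on members of a uniform partition, so the constants $\delta_n$ furnish a modulus exactly as in the guiding principle above. The delicate point is a pair $x,y$ with $d(x,y)$ small but peeled at very different levels; this is harmless only because a high peeling level forces proximity to $K$ (the parent piece met $K$ and has small diameter), so both points must then lie near $K$, where the values of $g$ are pinned to values of $f$ at nearby reference points. I would therefore couple the partition estimate with the \emph{uniform continuity of $f$ along $K$}: by compactness of $K$ and continuity of $f$, a Lebesgue-number argument yields, for each $\eta>0$, a $\gamma>0$ with $|f(u)-f(v)|<\eta$ whenever $d(u,v)<\gamma$ and $\min\{d(u,K),d(v,K)\}<\gamma$. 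Choosing the threshold $\rho=\min\{\delta_N,\gamma\}$ with $N$ large enough that $1/(N-1)<\gamma$, a short case analysis on the peeling levels of $x$ and $y$ (equal levels give equal values; both levels large give proximity to $K$ and hence $\eta$-closeness via the $\gamma$-estimate) shows $|g(x)-g(y)|<\eta$ whenever $d(x,y)<\rho$. This yields the modulus, so $g$ is u.c., and $f$ is UA.
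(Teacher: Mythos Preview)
The paper does not include a proof of this theorem; it is merely stated with a citation to [BDP2]. So there is no in-paper argument to compare against, and I assess your proposal on its own merits.

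Your strategy is correct and natural for the metric case: exploit a refining sequence of uniform clopen partitions $\mathcal P_n$ (mesh $<1/n$, separation $\delta_n$), and define $g$ by peeling off pieces at the first level where they miss $K$, assigning on each peeled piece the value of $f$ at a reference point chosen in $M$ when possible. The verifications of $g=f$ on $K$, of $g(M)\subseteq f(M)$, and of continuity of $g$ are fine. The Lebesgue-number claim (a uniform modulus for $f$ on pairs with one point near $K$) is also correct, by the standard finite-subcover argument on $K$.

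Two small technical points should be tightened. First, arrange $\delta_1\ge\delta_2\ge\cdots$ (replace $\delta_n$ by $\min_{k\le n}\delta_k$); then $d(x,y)<\delta_N$ puts $x,y$ in the same $\mathcal P_n$-piece for every $n\le N$, and a two-line check shows that if both peeling levels are $\le N$ they are forced to be equal with identical peeled piece, whence $g(x)=g(y)$. Your phrase ``equal levels give equal values'' glosses over this. Second, your constants are slightly off: with $\rho=\min\{\delta_N,\gamma\}$ and $1/(N-1)<\gamma$, in the ``both levels $>N$'' case the reference points satisfy $d(m_P,m_Q)<2/N+\rho$, which need not be $<\gamma$. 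Taking $\rho=\min\{\delta_N,\gamma/3\}$ and $N$ with $2/N<\gamma/3$ repairs this and also covers the mixed case $x\in K$, $y\notin K$.

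One caveat: as written, your proof is for \emph{metric} uniformly zero-dimensional spaces, since it relies on a countable refining sequence of partitions. The theorem as stated is for uniform spaces; the same idea adapts (peel along a directed base of equivalence-relation entourages, or reduce to a pseudometric determined by $f$ and a finite open cover of $K$), but you have not addressed the non-metrizable case.
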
 
 
 In [BDP3] the Cantor set is  characterized as the only compact metrizable space $M$ such that each subspace of $M$ is UA. 

 \begin{theorem} {\rm  \cite{BDP2}}  The only  manifolds which are WUA are the compact ones and the real line. \end{theorem}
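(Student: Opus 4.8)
The plan is to prove the two directions separately, the forward direction being routine and the converse carrying all the weight. For the routine direction, recall that on a compact metric space continuity coincides with uniform continuity (Heine--Cantor), so every $f\in C(X)$ is u.c.\ and hence trivially UA and WUA; thus every compact manifold is WUA. That $\R$ is WUA is exactly Example \ref{ex1} (it is in fact UA). For the converse I would first reduce the problem. Taking ``manifold'' to mean a connected boundaryless metric manifold, the one-dimensional case is immediate: the only such non-compact manifold is $\R$ itself (the circle being the compact one), so there is nothing to exclude there. It therefore remains to show that a non-compact connected manifold $M$ of dimension $m\geq 2$ is \emph{not} WUA, and for this I would exhibit a single $f\in C(M)$ that fails WUA.

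The function $f$ should realize, inside $M$, the obstruction that makes $xy\in C(\R^2)$ fail WUA, namely a failure of DCF. Concretely I would construct two disjoint properly embedded curves $L_0$ and $L$, both escaping every compact set, with $f\equiv 0$ on $L_0$ and $f\equiv 1$ on $L$, arranged so that $\mathrm{dist}(L,L_0)=0$: the two curves accumulate on one another at infinity without ever meeting, exactly as the first-quadrant branch of $xy=1$ accumulates on the $x$-axis. Dimension $m\geq 2$ furnishes the transverse room to run $L$ arbitrarily close to $L_0$ without intersection, and non-compactness of $M$ lets this pinching happen ``at infinity'' so that $L_0\cup L$ is closed. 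Extending $f$ over $M$ by Tietze, with enough care that $L_0$ and $L$ remain genuine connected components of the fibres $f^{-1}(0)$ and $f^{-1}(1)$, produces a continuous $f$ that is not DCF, since $L_0$ and $L$ are components of distinct fibres at distance $0$.

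The heart of the matter, and the step I expect to be the real obstacle, is upgrading ``$f$ is not DCF'' to ``$f$ is not WUA''. The easy two-point argument behind ``UA implies DCF'' (pin one point in each offending component and use connectedness together with uniform continuity) is unavailable here, since WUA only permits a single point in $K$; with one pinned point the approximating map $g$ is free to be constant on the unpinned component, and no contradiction ensues. What is needed is the genuinely stronger implication ``WUA implies DCF'', which the paper establishes only for $C(\R^m)$ (Theorem \ref{*}, via the chain $WUA_d\Leftrightarrow DCF$ and the trivial $WUA\Rightarrow WUA_d$).

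My plan is to transport that $\R^m$-argument into $M$ by arranging the pinch region to lie in coordinates on which the ambient metric is uniformly bi-Lipschitz to the Euclidean one, so that the distances driving the $C(\R^m)$ proof are faithfully reflected in $M$. The delicate point is to reconcile two competing demands: that the pinching occur inside such a uniformly Euclidean region, yet also be genuinely ``at infinity'' in the complete ambient metric, since no single relatively compact chart can host an unbounded configuration. Resolving this tension (for instance by chaining charts along $L_0$ with uniformly controlled distortion, so that a fixed bi-Lipschitz model persists out to infinity) is where the argument must do its real work; once the local model is faithful, the implication for $C(\R^m)$ applies to the restricted data and yields the failure of WUA. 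Combining this with the dimension-one reduction then gives that $\R$ is the unique non-compact WUA manifold, completing the proof.
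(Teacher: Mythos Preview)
Your reduction to non-compact connected manifolds of dimension $m\geq 2$ is fine, and the geometric picture---two properly embedded rays pinching together at infinity without meeting---is the right kind of obstruction. The gap is the step from ``not DCF'' to ``not WUA''. You correctly observe that the two-point argument behind ``UA $\Rightarrow$ DCF'' is unavailable, and that the paper only establishes ``WUA $\Rightarrow$ DCF'' for $f\in C(\R^m)$ (Theorem~\ref{*}). Your proposed workaround---run the $\R^m$ argument inside a region where the ambient metric is uniformly bi-Lipschitz to Euclidean---does not close the gap. The proof of Theorem~\ref{*} passes through TUA and truncations and is global on $\R^m$; moreover, as you yourself note, the pinching must occur at infinity in the ambient metric and so cannot live in a relatively compact chart, while a chain of charts with uniformly bounded distortion along an arbitrary ray in an arbitrary metric manifold is not something you get for free. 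As written, this step is a hope, not an argument.

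The paper already hands you a tool that bypasses DCF entirely and applies to any normal uniform space: Theorem~\ref{ps-hyp}, which says that a family of pseudo-hyperbolas forces failure of WUA. Your pair $(L_0,L)$ is morally a two-term fragment of such a family; what is missing is the countable interpolation. In the same tubular region where you ran $L$ alongside $L_0$, run countably many parallel copies $H_n$ so that each $H_n$ is closed and connected, consecutive $H_n,H_{n+1}$ pinch together at infinity (so $H_n\cup H_{n+1}$ is uniformly connected), $H_n$ misses the closure of $\bigcup_{m>n}H_m$, and the $H_n$ accumulate on $L_0$ at finite points (so $\bigcup_n H_n$ is not closed). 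This is exactly the configuration of the model family $H_n=\{(x,y):(xy)^{-1}=n\}$ in $\R^2$, and once it is in place Theorem~\ref{ps-hyp} yields non-WUA directly, with no appeal to DCF or to the $\R^m$-specific equivalences of Diagram~2.
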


Now we see that a metric space having a continuous  function which is not  uniformly continuous, necessarily also  has a 
bounded uniformly approachable  function that is not uniformly continuous. Hence, in some sense, 
UA is ``closer to continuity than to uniform continuity". 

\begin{theorem}\label{UC} {\rm  \cite{BDP2}} A metric space  $X$ is $UC$ if and only if every bounded uniformly approachable  function is uniformly continuous, so that a space $X$ with $C_u(X) =C_{ua}(X)$ is necessarily  a $UC$ space. 
\end{theorem}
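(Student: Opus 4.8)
The plan is to separate the two implications, the forward one being immediate and the reverse one carrying all the weight. If $X$ is $UC$ then every $f\in C(X)$ is already u.c., so a fortiori every bounded $UA$ function (being continuous) is u.c.; the same observation yields the final clause at once, since $C_u(X)=C_{ua}(X)$ forces in particular every bounded $UA$ function to be u.c., whence $X$ is $UC$ by the equivalence. For the converse I would argue by contraposition: assuming $X$ is \emph{not} $UC$, I would manufacture a bounded $UA$ function that fails to be u.c.

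First I would record the elementary fact that $X$ fails to be $UC$ precisely when it carries two disjoint nonempty closed sets $A,B$ with $d(A,B)=0$. In one direction, from such $A,B$ the function $x\mapsto d(x,A)/(d(x,A)+d(x,B))$ is continuous, bounded into $[0,1]$, and not u.c. In the other direction, from a continuous non-u.c. $f$ one extracts sequences $x_n,y_n$ with $d(x_n,y_n)\to 0$ and $|f(x_n)-f(y_n)|\ge\varepsilon$; by continuity of $f$ neither sequence can have a cluster point (a convergent subsequence of $(x_n)$ would drag $(y_n)$ to the same limit and collapse the gap), so $A=\{x_n\}$ and $B=\{y_n\}$ are closed and discrete with $d(A,B)=0$. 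The decisive structural consequence is that $A\cup B$ is closed and discrete, hence meets every compact set in only finitely many points.

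Next I would build the bounded witness $h\colon X\to[0,1]$. A convenient choice is a locally finite sum of Lipschitz bumps of height $1$ centred at the points $y_n$, with radii small enough to avoid the $x_n$ and to keep the supports pairwise disjoint, so that $h(y_n)=1$, $h(x_n)=0$ and the pairs $(x_n,y_n)$ witness that $h$ is not u.c. It remains to check that $h$ is $UA$, and this is the entire difficulty. Given a compact $K$ and an arbitrary $M\subseteq X$, only finitely many bumps meet $K$; the strategy, in the spirit of the truncations of Example \ref{ex1} and of the minimal $K$-truncation of Definition \ref{min_truncation}, is to leave $h$ untouched on those finitely many bumps (so that $g=h$ on $K$ and $g$ is a finite sum of Lipschitz functions, hence u.c.) while flattening $h$ to an already-attained value on the rest of $X$, the flattening being possible because the surviving bad pairs $(x_n,y_n)$ all lie outside $K$.

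The main obstacle is exactly this last verification, and concretely the clause $g(M)\subseteq h(M)$ when $M$ concentrates on the bad set, for instance $M\subseteq\{y_n\}$, where $h(M)$ collapses to the single value $1$: one then cannot simply zero out the discarded bumps, since that would put $0\notin h(M)$ into $g(M)$, and one must instead make $g$ identically $1$ on a suitable neighbourhood of the part of $M$ lying off $K$, absorbing the escaping pairs $(x_n,y_n)$ into this constant region without destroying uniform continuity or the matching $g=h$ on $K$. Reconciling the matching condition on $K$ with the containment condition on $M$ in this degenerate case is the only genuinely delicate point; once it is handled, the remaining cases are routine local-finiteness and Lipschitz bookkeeping, and the resulting $g$ is the required $(K,M)$-approximation, so $h$ is a bounded $UA$ function that is not u.c.
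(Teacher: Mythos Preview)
The paper states this result without proof (it is attributed to \cite{BDP2}), so there is no argument in the text against which to compare your proposal directly. Your overall strategy is sound: the forward direction is trivial, and for the contrapositive you rightly extract (after passing to subsequences) disjoint closed discrete sets $\{x_n\},\{y_n\}$ with $d(x_n,y_n)\to 0$ and build a bounded non-u.c.\ bump function $h=\sum_n\phi_n$. The genuine gap is in the $UA$ verification. You propose to ``leave $h$ untouched on those finitely many bumps (so that $g=h$ on $K$)\ldots while flattening $h$ to an already-attained value on the rest of $X$'', but $K$ is typically \emph{not} contained in the kept bumps: on the part of $K$ lying outside every bump one has $h=0$, so flattening the complement to a value $c\neq 0$ (which is forced whenever $0\notin h(M)$) would destroy $g=h$ on $K$. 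You acknowledge this tension and then stop, calling the rest ``routine'' --- but the reconciliation you yourself flag as ``the only genuinely delicate point'' is never actually carried out, and you treat only the special case $M\subseteq\{y_n\}$, whereas e.g.\ $h(M)=\{c\}$ with $0<c<1$ needs the same care.

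Here is one clean way to close the gap. Choose radii $r_n\to 0$ so that the closed supports $\overline B(y_n,r_n)$ are pairwise disjoint; discreteness of $\{y_n\}$ then makes the family locally finite and $h$ continuous. Given $K$ and $M$, let $F$ be the (finite) set of $n$ with $\operatorname{supp}\phi_n\cap K\neq\emptyset$, put $K'=K\cup\bigcup_{n\in F}\overline B(y_n,r_n)$ (compact) and $V=\bigcup_{n\notin F}\overline B(y_n,r_n)$; the latter is closed (again by discreteness of $\{y_n\}$ and $r_n\to 0$), disjoint from $K'$, and $d(K',V)>0$ since $K'$ is compact. If $0\in h(M)$ or $M=\emptyset$, the Lipschitz function $g=h_F:=\sum_{n\in F}\phi_n$ is already a $(K,M)$-approximation. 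Otherwise every $m\in M$ lies in some bump, hence in $K'\cup V$; pick any $c\in h(M)$, take a Lipschitz $\psi:X\to[0,1]$ with $\psi\equiv 0$ on $K'$ and $\psi\equiv 1$ on $V$, and set $g=(1-\psi)h_F+\psi c$. Then $g$ is Lipschitz, $g=h_F=h$ on $K$, and for each $m\in M$ one has $g(m)=h(m)\in h(M)$ (if $m\in K'$) or $g(m)=c\in h(M)$ (if $m\in V$). This supplies the missing $(K,M)$-approximation and completes your argument.
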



In order the get a necessary condition for being a WUA space, the following notion was proposed in [BeDi1]: 

\begin{defn} Let $X$ be a uniform space. A family of {\em pseudo-hyperbolas} in $X$ is given by 
a countable family $\{H_n\}$ of disjoint subsets of $X$ such that for every $n \in \N$:  
\begin{enumerate}
\item $H_n$ is closed and uniformly connected; 
\item $H_n \cup H_{n+1}$ is uniformly connected;
\item $H_n \cap \overline{\bigcup_{m>n} H_n} = \emptyset$; and
\item 
the set $H = \bigcup_n H_n$ is not closed in $X$.
\end{enumerate}
\end{defn}

This notion was inspired by the following example due to Burke. 

\begin{example} A family of pseudo-hyperbolas in ${\bf R}^2$ is given  by the sets $H_n = \{(x,y) : (xy)^{-1} =n\}$. \end{example}

\begin{theorem}\label{ps-hyp} {\rm (\cite{BD1})} If a normal uniform space $X$ has a family of pseudo-hyperbolas, then $X$ is not $WUA$. \end{theorem}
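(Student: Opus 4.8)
The plan is to produce a single witnessing function $f\in C(X)$ that fails to be WUA, which immediately shows $X$ is not a WUA space. The model is Burke's function $f(x,y)=xy$ on $\R^2$, which is constant $=1/n$ on the hyperbola $H_n=\{xy=1/n\}$ and tends to $0$ along the axes, where the $H_n$ accumulate. Accordingly, fix a strictly decreasing sequence with distinct terms, say $c_n=1/n\to 0$, and build $f$ so that $f\restriction_{H_n}=1/n$ while $f\equiv 0$ on $\overline H\setminus H$, where $H=\bigcup_n H_n$.

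First I would define $f_0$ on the closed set $\overline H$ by the prescription above and check that it is continuous there. Continuity at a point $x\in H_n$ follows because, by condition (3) of the pseudo-hyperbola definition, $x$ is separated from $\overline{\bigcup_{m>n}H_m}$, while it is trivially separated from the finite closed union $\bigcup_{m<n}H_m$; hence some neighbourhood $U$ of $x$ meets $H$ only in $H_n$, and no point of $\overline H\setminus H$ lies in $U$, so $f_0\equiv 1/n$ on $U\cap\overline H$. Continuity at a point $z\in\overline H\setminus H$ uses condition (3) again: since $\bigcup_{m<N}H_m$ is closed and misses $z$, every small neighbourhood of $z$ meets only $H_m$ with $m\geq N$, so $f_0$ takes values in $[0,1/N]$ there and $f_0(x)\to 0=f_0(z)$. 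Normality of $X$ then lets me extend $f_0$ to $f\in C(X)$ by the Tietze theorem. I expect this construction step to be the main obstacle, since it is exactly where normality and condition (3) are needed.

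Next, take $p=z$ for some $z\in\overline H\setminus H$ (which exists by condition (4)) and $M=H$, so that $f(p)=0$ and $f(M)=\{1/n:n\in\N\}$; recall from Definition \ref{UA:Definition} that for WUA the approximation need only agree with $f$ at the single point $p$. Suppose toward a contradiction that a $UC$ $(p,M)$-approximation $g$ exists. Since $g$ is u.c.\ and $H_n$ is uniformly connected (condition (1)), the image $g(H_n)$ is a uniformly connected subset of $g(M)\subseteq\{1/n:n\in\N\}$; but every uniformly connected subset of $\{1/n:n\in\N\}$ is a singleton, so $g$ is constant on each $H_n$, say $g\restriction_{H_n}\equiv d_n$. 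Applying the same reasoning to $H_n\cup H_{n+1}$, which is uniformly connected by condition (2), the two-point set $\{d_n,d_{n+1}\}$ must be uniformly connected, forcing $d_n=d_{n+1}$. Hence $g$ is constant, equal to some $d\in\{1/n:n\in\N\}$, on all of $H$.

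Finally, since $z\in\overline H$ and $g$ is continuous, $g(z)=d\neq 0$, contradicting $g(p)=f(p)=0$; so no $(p,M)$-approximation exists and $f$ is not WUA. The only auxiliary facts I would still record are the two standard lemmas that a u.c.\ image of a uniformly connected set is uniformly connected (pull the separating entourage back through $g$) and that any subset of $\{1/n:n\in\N\}$ with at least two elements is not uniformly connected (split off its maximum, which sits at positive distance from the rest).
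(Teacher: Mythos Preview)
Your argument is correct. Note, however, that the paper does not actually give a proof of this theorem; it merely states it with a citation to [BD1], so there is no proof in the paper to compare against. Your construction is the natural one and is visibly the abstraction of Burke's example $f(x,y)=xy$: define $f_0=1/n$ on $H_n$ and $f_0=0$ on $\overline H\setminus H$, check continuity on $\overline H$ via conditions (3) and (4), extend by Tietze using normality, and then use the uniform connectedness of each $H_n$ and of each $H_n\cup H_{n+1}$ together with $g(M)\subseteq\{1/n:n\in\N\}$ to force any u.c.\ $(\{z\},H)$-approximation $g$ to be constant $d\in\{1/n:n\in\N\}$ on $H$, contradicting $g(z)=f(z)=0$. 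All steps, including the two auxiliary lemmas you isolate (u.c.\ images of uniformly connected sets are uniformly connected; no two-point subset of $\{1/n\}$ is uniformly connected), are sound, and your continuity verification for $f_0$ is carried out correctly.
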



\begin{theorem}\label{nonWUAmagic} {\rm (\cite{BD1})} Let $X$ be a separable uniform space and suppose that there exists $f \in C(X)$ with
countable fibers without non-constant uniformly continuous truncations. Then $X$ is not $WUA$. \end{theorem}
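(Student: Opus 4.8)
The plan is to assume that $X$ is $WUA$ and to derive from the hypotheses a nonconstant uniformly continuous truncation of $f$, contradicting the assumption on $f$. Since $X$ is $WUA$, the given function $f$ is in particular $WUA$, so for every point $x_0\in X$ and every $M\subseteq X$ there is a uniformly continuous $g\in C(X)$ with $g(x_0)=f(x_0)$ and $g(M)\subseteq f(M)$. The whole argument therefore reduces to producing a single pair $(x_0,M)$ for which \emph{any} such approximation $g$ is forced to be a nonconstant uniformly continuous truncation of $f$: such a $g$ cannot exist by hypothesis, so $f$ is not $WUA$ and hence $X$ is not $WUA$.

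First I would arrange that no \emph{constant} approximation survives. A constant $g\equiv c$ with $g(x_0)=f(x_0)$ must have $c=f(x_0)$, and then $g(M)\subseteq f(M)$ forces $f(x_0)\in f(M)$. Hence it suffices to choose the base point $x_0$ and the set $M$ so that $f(x_0)\notin f(M)$; equivalently, $M$ misses the (countable) fibre $f^{-1}(f(x_0))$. With this single precaution every approximation $g$ relative to $(x_0,M)$ is automatically nonconstant — note that $f$ is nonconstant, since if it were constant its unique fibre $X$ would be countable, the degenerate case we may set aside.

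The core of the proof is the choice of $M$ so that the inclusion $g(M)\subseteq f(M)$ compels $g$ to be a truncation of $f$, i.e. to be constant on every connected component of the open set $B=\{x: g(x)\neq f(x)\}$. The mechanism I would aim for is this: on a component $C$ of $B$ on which a continuous $g$ is \emph{not} constant, $g(C)$ is a nondegenerate interval, and I want $M$ to have planted enough points inside such a $C$ that one of them, say $m$, satisfies $g(m)\notin f(M)$, violating $g(M)\subseteq f(M)$. Thus $M$ must behave like a \emph{magic set} for $f$: the restriction $f|_M$ should be injective and $f(M)$ so rigid that a continuous real map cannot send $M$ into $f(M)$ while genuinely varying on a component where it departs from $f$. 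This is exactly where separability and the countability of the fibres enter. Separability lets me fix a countable base of $X$ and reduce the control to countably many local conditions, while the countable fibres guarantee that at each step only countably many points are forbidden, so in the (uncountable, separable) space $X$ there is always room to continue the construction and to keep $f|_M$ injective and $M$ disjoint from $f^{-1}(f(x_0))$.

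I would build $M$ by transfinite recursion of length at most $\mathfrak c$, enumerating the conditions to be killed — roughly, the potential ``nonconstant pieces'' coming from basic open sets paired with candidate values — and, at stage $\xi$, adding to $M$ a point witnessing the $\xi$-th condition while avoiding the countably many fibres already committed. The delicate point, and the step I expect to be the main obstacle, is to phrase these conditions so that killing all of them really yields the magic property ``$g(M)\subseteq f(M)\Rightarrow g$ is a truncation of $f$'' \emph{uniformly} over all continuous $g$, since the relevant component $C$ depends on $g$ and is not known in advance; taming this dependence is precisely what forces the use of a countable base and the careful bookkeeping of fibres. Once $M$ is in place, the reduction of the first paragraph closes the argument: the hypothetical $WUA$ approximation $g$ would be a nonconstant uniformly continuous truncation of $f$, which is impossible, so $X$ is not $WUA$.
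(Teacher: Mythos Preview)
Your plan is the magic-set argument of \cite{BD1}; the present survey does not reproduce the proof but only cites that reference (the theorem's internal label \emph{nonWUAmagic} reflects the method). The reduction in your first two paragraphs---choose $x_0$ and $M$ with $M\cap f^{-1}(f(x_0))=\emptyset$ so that any $(x_0,M)$-approximation is nonconstant, and arrange that $g(M)\subseteq f(M)$ forces $g$ to be a truncation of $f$---is exactly the scheme.

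Two corrections to your recursion sketch. First, the role of separability is not a countable base for ``local conditions'' but simply the bound $|C(X)|\le\mathfrak c$ (a continuous real function is determined on a countable dense set); what one enumerates in type $\mathfrak c$ is $C(X)$ itself, and at stage $\xi$ one handles the single function $g_\xi$. Second, at stage $\xi$ you must avoid $|\xi|$ fibres from earlier stages, not ``countably many''; since each fibre of $f$ is countable this forbidden set has size $<\mathfrak c$. The fact that makes the step succeed is the one you allude to but should state explicitly: if $g_\xi$ is not a truncation of $f$, then some component $C_\xi$ of the open set $\{g_\xi\ne f\}$ carries a nonconstant $g_\xi$, so $g_\xi(C_\xi)$ is a nondegenerate interval and hence $|C_\xi|=\mathfrak c$. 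This gives enough room to choose $m_\xi\in C_\xi$ with $g_\xi(m_\xi)\notin f(\{m_\alpha:\alpha\le\xi\})$ while avoiding $\bigcup_{\alpha<\xi}f^{-1}(g_\alpha(m_\alpha))\cup f^{-1}(f(x_0))$; the forward constraints $m_\alpha\notin f^{-1}(g_\xi(m_\xi))$ for $\alpha>\xi$ (again a single countable fibre each) then keep $g_\xi(m_\xi)\notin f(M)$ for the final $M$.
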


\begin{example}
Now we give three non-$WUA$ examples of subsets $X_1, X_2$ and $X_3$ of $ \R^2$ that contain no pseudo-hyperbolas. 
 To prove
that they are not $WUA$ one can apply Theorem \ref{nonWUAmagic}. So it is necessary to find, in each case, a continuous
function with countable fibers and without non-constant uniformly continuous truncations. 
\begin{itemize}
\item[(a)] The space $X_1$ is the unit circle minus a non-empty finite set. So 
 $X_1$  can be   identified with a cofinite subset of the set of complex numbers $e^{i\theta}$ with $0<\theta < 2\pi$. Define $f \colon X_1
\to \R$ by $f(e^{i\theta}) = \theta$. Then $f$ is non-$WUA$. It is easy to see that $X_1$ contains no pseudo-hyperbolas. 
\item[(b)] The space $X_2$ consists of the union of the two hyperbolas 
$H_1= \{(x,y\in \R^2: x\geq 0, y \geq 0, xy=1\}$ and $H_2= \{(x,y\in \R^2: x\geq 0, y \geq 0, xy=2\}$. Obviously, $X_2$ contains no pseudo-hyperbolas. 
Define $f \colon X_2 \to \R$ as follows. If $(x,y)\in H_1$, then set $f(x,y) = e^x$. If $(x,y)\in H_2$, then set $f(x,y) = -e^{-x}$. It is easy to see that this works.
\item[(c)] Let $X_3$ be the space
from Diagram 3. It contains no pseudo-hyperbolas. 
 Define $f \colon X_3 \to \R$ by identifying $X_3$ with the subspace of $\R^2$ consisting of the union of the two vertical axes
$x = -1$ and $x = 1$, together with the horizontal segments $I_n =\{(x, n) \in \R^2 : -1 \leq x \leq 1\}$ ($n \in \N$).  Let $f(-1, y) =
-y$, $f(1, y) = {y}$. This defines $f$ on the two axes of the ladder. On each horizontal segment $I_n$, $f$ is linear.
This  uniquely defines $f$ since we have already defined $f$ on the extrema of the horizontal segments $I_n$. $f$ is pseudo-monotone, so each truncation of $f$ is an
$(a,b)$-truncation.  Any such non-constant truncation is not uniformly continuous. Since $f$ has countable
fibers, Theorem \ref{nonWUAmagic} applies and thus $X_3$ is not $WUA$. 
\end{itemize}
\end{example}

\begin{center}
\begin{picture}(341,211) \label{ladder}
\put(140,195){\line(0,-1){130}} 
\put(160,195){\line(0,-1){130}} 
\put(140,65){\line(1,0){20}} 
\put(140,85){\line(1,0){20}} 
\put(140,105){\line(1,0){20}} 
\put(140,125){\line(1,0){20}} 
\put(140,145){\line(1,0){20}} 
\put(140,165){\line(1,0){20}} 
\put(140,185){\line(1,0){20}} 

\end{picture} 
\end{center}

\vskip-0.8in

\centerline{Diagram 3: A subset of $\R^2$}

\bigskip


%



\section{Thin spaces}

The class of topological spaces $X$ having connected quasi components  is closed under homotopy type and it contains all compact Hausdorff spaces 
(see \cite[Theorem 6.1.23]{E}) and every subset of the real line. Some sufficient conditions are given in \cite{GN} (in terms of existence of 
Vietoris continuous selections) and \cite{CMP} (in terms of the quotient space $\Delta X$ in which each quasi-component is identified to a point), 
but an easily-stated description of this class does not seem to be available (see \cite{CMP}). The situation is complicated even in the case when all 
connected components of $X$ are trivial, i.e., when $X$ is hereditarily disconnected.   In these terms the question is  to distinguish between hereditarily disconnected  and totally disconnected spaces (examples to this effect go back to Knaster and  Kuratowski \cite{KK}).  

The connectedness of the quasi component (i.e., the coincidence of the quasi component and the connected component) in topological groups is also a
rather hard question.  Although a locally compact space does not need to
have connected quasi components \cite[Example 6.1.24]{E}, all locally compact groups 
have this property. This is an easy consequence of the well known fact that the connected component of a locally compact group coincides with
the intersection of all open subgroups of the group \cite[Theorem 7.8]{HR}.  All countably compact groups were shown to have this property, too (\cite{D3}, see also \cite{D2,D4}). Many examples of pseudocompact group where this property strongly fails in different aspects, as well as further information on quasi components in topological groups, can be found in (\cite{D1,D2,D4}, see also \cite{U} for a planar group with non-connected quasi components). 

Let us recall the definition of the {\em quasi component} $Q_x(X)$ of a point $x$ in a topological space $X$. This is the set of all points $y\in X$ such that $f(y) = f(x)$ for every continuous function
$f: X\to \{0,1\}$, where the doubleton $\{0,1\}$ is discrete. 
Analogously, given a uniform space $X$ and a point $x \in X$ the {\em uniform quasi component} of $x$
consists of all points $y\in X$ such that $f(y) = f(x)$ for every uniformly continuous function
$f: X\to \{0,1\}$, where the doubleton $D=\{0,1\}$ has the uniformly discrete structure (i.e., the 
diagonal of $D \times D$ is an entourage). We denote by $Q^u_x(X)$ the uniform quasi component of $x$. 

In these terms we have the following inclusions 
$$
C_x(X) \subseteq Q_x(X) \subseteq Q^u_x(X), \eqno(*)
$$
where $C_x(X)$ denotes the connected component of $x$. Now we can introduce the relevant notion for this section: 

\begin{defn}\label{Separates} A uniform space $X$ is said to be  {\bf thin} if for every closed subset $Y$ of $X$ and every $y \in Y$, the uniform quasi component of $y$ in $Y$ is connected.  \end{defn}

It is easy to see that all inclusions in (*) become equalities in the case of
compact spaces. Hence compact spaces are thin. This  also follows
from the more general property given in Theorem \ref{UA>Thin}.

\begin{defn} For three subsets $A, B$ and $S$ of a topological space $X$ we say that $S$ {\bf cuts} between $A$ and $B$ if 
$S$ intersects every connected set which meets both $A$ and $B$. (If $S$ is empty this means 
that there is no connected set which meets both $A$ and $B$.)
\end{defn}

If a set separates $A$ and $B$ (see Definition \ref{Separates}), then it also cuts between $A$ and $B$, but the converse is false in general. 

\begin{defn} We say that a uniform space $X$ has the {\bf compact  separation property} (briefly $\CSP$), if for any two disjoint closed 
connected subspaces $A$ and $B$ there is a compact set $K$ disjoint from  $A$ and $B$ such that every neighbourhood of $K$ disjoint from $A$ and $B$
separates $A$ and $B$ (consequently $K$ intersects every closed connected set which meets both $A$ and $B$, see Definition \ref{separated}).
\end{defn}

It is easy to see that every compact space has CSP since disjoint compact sets are always separated. 

It was proved in  \cite[Lemma 3.2]{BDP2} that if a metric space $X$ contains two disjoint closed sets $H$ and $K$ and a point $a \in H$ such that the uniform quasi
component of $a$ in $H \cup K$ intersects $K$, then $X$ is neither thin nor $UA$. 
This yields the following corollary: 

\begin{cor}\label{positive} Two disjoint closed uniformly connected subsets $A, B$ of a thin metric space $X$ are at positive distance.
\end{cor}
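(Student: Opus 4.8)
The plan is to argue by contradiction, reducing the statement to the criterion quoted just above from \cite[Lemma 3.2]{BDP2}. Assume that $A$ and $B$ are disjoint closed uniformly connected subsets of the thin metric space $X$ but that $d(A,B)=0$; I will produce data satisfying the hypotheses of that lemma and thereby contradict thinness.

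The crucial intermediate step is to show that the subspace $A\cup B$ is itself uniformly connected. To see this I would take an arbitrary uniformly continuous map $f\colon A\cup B\to D$ into the uniformly discrete doubleton $D=\{0,1\}$ and prove it is constant. Since $A$ and $B$ are each uniformly connected, $f\restriction_A$ and $f\restriction_B$ are each constant. Were their two values different, $f$ would be the indicator function that is $0$ on $A$ and $1$ on $B$; but $d(A,B)=0$ yields sequences $a_n\in A$, $b_n\in B$ with $d(a_n,b_n)\to 0$ and $f(a_n)\neq f(b_n)$, contradicting the uniform continuity of $f$ (no $\delta$ can control $f$ on such pairs). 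Hence the two values agree and $f$ is constant.

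With $A\cup B$ uniformly connected, fix any $a\in A$. By the very definition of the uniform quasi component, $Q^u_a(A\cup B)=A\cup B$, since every uniformly continuous map $A\cup B\to D$ is constant and hence agrees with its value at $a$ everywhere. In particular $Q^u_a(A\cup B)$ meets $B$. Taking $H=A$ and $K=B$ (two disjoint closed sets) together with the point $a\in H$, the hypotheses of \cite[Lemma 3.2]{BDP2} are exactly met, whence $X$ is not thin --- contradicting our assumption. Therefore $d(A,B)>0$.

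I expect the only genuine content to lie in the uniform connectedness of $A\cup B$; the remainder is an unwinding of definitions followed by an appeal to the cited lemma. The point worth emphasizing is that uniform connectedness of the two pieces by itself is insufficient in general: it is precisely the assumed failure $d(A,B)=0$ that fuses $A$ and $B$ into a single uniformly connected subspace, and this fusion is what triggers the lemma and produces the contradiction.
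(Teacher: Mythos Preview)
Your argument is correct and follows exactly the route the paper intends: the corollary is stated as an immediate consequence of \cite[Lemma 3.2]{BDP2}, and your proof simply spells out why the hypothesis of that lemma is met when $d(A,B)=0$, via the observation that $A\cup B$ is then uniformly connected. There is nothing to add or correct.
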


The following notion is relevant to the description of thin spaces. 

\begin{defn} \label{garland-configuration} Given two  distinct points $a,b$ of a metric space $X$ such that the uniformly connected component of $a$ contains $b$, there exists  for each $n$ a finite set $L_n \subset X$ whose points form a $1/n$-chain from $a$ to $b$. We say that the sets $L_n$, together with $a$ and $b$, form a ({\bf discrete}) {\bf garland}, if there 
is an open subset $V$ of $X$ which separates $a$ and $b$ and such  that $V\cap\bigcup_n L_n$ is closed (and discrete). \end{defn}

One can give a characterization of the thin of metric spaces in terms of existence of garlands in the space. 

\begin{prop}\label{new_prop} {\rm \cite{BDP2} } For a metric space $X$ the following conditions are equivalent: 
  \begin{itemize}
     \item[(a)] $X$ is thin;
     \item[(b)] $X$ contains no garlands;  
     \item[(c)] $X$ contains no discrete garlands.
\end{itemize}
\end{prop}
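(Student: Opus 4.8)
The plan is to prove the cycle (a) $\Rightarrow$ (b) $\Rightarrow$ (c) $\Rightarrow$ (a). The implication (b) $\Rightarrow$ (c) is immediate, since a discrete garland is in particular a garland, so the absence of garlands forces the absence of discrete ones. The two substantial implications I would establish in contrapositive form: for (a) $\Rightarrow$ (b) I show that a garland makes $X$ fail to be thin, and for (c) $\Rightarrow$ (a) I show that a non-thin $X$ must contain a \emph{discrete} garland. Throughout I rely on the chain characterization of uniform quasi components: two points lie in the same uniform quasi component of a metric space $Z$ iff for every $\varepsilon>0$ they are joined by an $\varepsilon$-chain in $Z$ (the nontrivial direction being that the $\varepsilon$-connected class of a point is at distance $\ge\varepsilon$ from its complement, so its indicator is uniformly continuous). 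I also use that uniform quasi components are closed, being intersections of fibres of uniformly continuous, hence continuous, $\{0,1\}$-valued maps.

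For (a) $\Rightarrow$ (b): suppose $a\neq b$, chains $L_n$ and an open set $V$ separating $a,b$ (Definition \ref{separated}) form a garland, and put $C:=V\cap\bigcup_nL_n$ (closed by hypothesis) and $Y:=\overline{\bigcup_nL_n}$ (closed). First I would check $V\cap Y=C$: any point of $V\cap Y$ is a limit of chain points which, $V$ being open, eventually lie in $C$, so the point lies in $\overline C=C$. Thus $C$ is at once open in $Y$ (trace of $V$) and closed in $Y$, i.e.\ clopen; moreover, writing $X\setminus V=P\sqcup R$ for the separation with $a\in P$, $b\in R$, the traces $Y\cap P$ and $Y\cap R$ are separated in $Y$, so $C$ separates $a$ and $b$ in $Y$. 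Since the chains lie in $Y$, the points $a,b$ share a uniform quasi component $Q:=Q^u_a(Y)$ and $b\in Q$. If $Q$ were connected, $C\cap Q$ would be clopen in $Q$, hence empty or all of $Q$; it cannot be all of $Q$ (as $a\in Q\setminus C$), and if empty the connected set $Q$ would lie in one of the separated pieces $Y\cap P$, $Y\cap R$, impossible since $Q$ contains both $a$ and $b$. Hence $Q$ is disconnected, $Y$ is closed, and $X$ is not thin (Definition \ref{Separates}).

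For (c) $\Rightarrow$ (a): assume $X$ is not thin, so there are a closed $Y$ and $y\in Y$ with $Q:=Q^u_y(Y)$ disconnected; write $Q=A\sqcup B$ with $A,B$ nonempty and clopen in $Q$, set $a:=y\in A$, and pick $b\in B$. As $Q$ is closed in $X$, so are $A,B$, and they are disjoint; since all points of $Q$ share the uniform quasi component, $a,b$ are joined by a $1/n$-chain $L_n\subseteq Y$ for every $n$. I would separate $a,b$ by the \emph{merely continuous} function $\phi(x)=d(x,A)/(d(x,A)+d(x,B))$ and set $V:=\phi^{-1}((1/3,2/3))$: this is open with $\overline V\subseteq\phi^{-1}([1/3,2/3])$ and separates $a,b$, since $X\setminus V$ splits into the disjoint closed, hence separated, sets $\phi^{-1}([0,1/3])\ni a$ and $\phi^{-1}([2/3,1])\ni b$. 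The crux is that $V\cap\bigcup_nL_n$ has no accumulation point. Here is the key observation: because each $L_n$ is \emph{finite}, any accumulation point $z$ of $\bigcup_nL_n$ is approached by chain points coming from arbitrarily large indices $n$; feeding these into the chain/uniform-continuity argument gives $f(z)=f(a)$ for every uniformly continuous $f\colon Y\to\{0,1\}$, i.e.\ $z\in Q=A\cup B$. Since $A\cup B\subseteq\phi^{-1}(\{0,1\})$ is disjoint from $\overline V\subseteq\phi^{-1}([1/3,2/3])$, and every accumulation point of $V\cap\bigcup_nL_n$ must lie in both $\overline V$ and the accumulation set of $\bigcup_nL_n$, there are none. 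Hence $V\cap\bigcup_nL_n$ is closed and discrete, and $(a,b,\{L_n\},V)$ is a discrete garland.

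The main obstacle is exactly this discreteness, and it is delicate because $A$ and $B$, being the two topological pieces of a single uniform quasi component, are necessarily at distance $0$ (otherwise their indicator would be a uniformly continuous $\{0,1\}$-map splitting the quasi component); thus no uniformly continuous function separates them, and one is forced to use the non-uniform $\phi$, whose level band $V$ could a priori meet the chains in a set accumulating along $\partial V$. What rescues the argument is the finiteness of each individual chain: it guarantees that all accumulation of $\bigcup_nL_n$ is driven by ever-finer chains and therefore lands inside $Q$, which the band $V$ avoids. Once this is in place no careful engineering of the chains is required — any choice of $1/n$-chains works.
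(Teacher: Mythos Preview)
The paper is a survey and does not prove this proposition, citing instead \cite{BDP2}. Your argument is correct and follows the natural route.

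Both nontrivial implications are handled properly. In (a)$\Rightarrow$(b) you correctly pass to the closed set $Y=\overline{\bigcup_n L_n}$; the point to note is that the garland definition only gives $a,b$ in the same uniform quasi component of $X$, but since the chains $L_n$ themselves lie in $Y$, the chain characterization places $a,b$ in the same uniform quasi component of $Y$ as well, which is what you need. The identification $V\cap Y=C$ and the resulting clopenness of $C$ in $Y$ are verified correctly.

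In (c)$\Rightarrow$(a) the key technical step---that every accumulation point of $\bigcup_n L_n$ lands in $Q$---is exactly right and rests, as you emphasize, on the finiteness of each $L_n$: any accumulation must come from chains of arbitrarily large index $n$, and then uniform continuity of any $f\colon Y\to\{0,1\}$ forces $f(z)=f(a)$ via the approximating chain points. Since $Q=A\cup B\subseteq\phi^{-1}(\{0,1\})$ is disjoint from $\overline V\subseteq\phi^{-1}([1/3,2/3])$, the set $V\cap\bigcup_n L_n$ is closed and discrete, as required. One small point worth making explicit: the chains you construct lie in $Y$, but since $Y\subseteq X$ they are also $1/n$-chains in $X$, and this places $a,b$ in the same uniform quasi component of $X$, as the garland definition (stated for $X$) demands; the separating set $V$ is likewise open in $X$ by construction.
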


The main result of the  paper \cite{BDP2} is the following:

\begin{theorem}\label{thmThin1}
 Every complete thin metric space has $\CSP$. 
 \end{theorem}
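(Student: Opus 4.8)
The plan is to reduce $\CSP$ to the construction of a single compact ``cut'' obtained as the intersection of a decreasing sequence of closed separators that shrink uniformly, exactly as in the $UA$ case of Theorem \ref{ostacolo}, but now extracting the separators from thinness via the no-garland criterion of Proposition \ref{new_prop}.

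First I would fix disjoint closed connected $A,B$ and invoke Corollary \ref{positive} to get $d(A,B)=2\delta>0$. For $\eps>0$ let $A_\eps$ be the set of points joined to $A$ by an $\eps$-chain; a one-line argument shows $A_\eps$ is clopen. If $B\cap A_\eps=\emptyset$ for some $\eps$, then the clopen partition $X=A_\eps\cup(X\setminus A_\eps)$ shows that the empty set separates $A$ and $B$, so $K=\emptyset$ witnesses $\CSP$ (every open $U$ disjoint from $A\cup B$ separates, by restricting this partition to $X\setminus U$). Thus I may assume $B\sq A_\eps$ for every $\eps$, i.e. $A$ and $B$ lie in a common uniform quasi-component of $X$.

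Next, suppose I can produce, for each $n$, a nonempty closed set $H_n$, disjoint from $A$ and $B$, which separates $A$ and $B$ (Definition \ref{separated}) and is covered by finitely many balls of diameter $<1/n$; I would also arrange $H_{n+1}\sq H_n$. Completeness then does the rest: each $H_n$ is closed and totally bounded, hence compact, so $K:=\bigcap_n H_n$ is a nonempty compact set disjoint from $A$ and $B$. Given any open $U\supseteq K$ with $U\cap(A\cup B)=\emptyset$, the compact sets $H_n\setminus U$ decrease to $\left(\bigcap_n H_n\right)\setminus U=\emptyset$, whence $H_N\sq U$ for some $N$; since $H_N$ separates $A$ and $B$, restricting its clopen partition of $X\setminus H_N$ to the smaller complement $X\setminus U$ shows that $U$ separates $A$ and $B$ as well. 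This is precisely $\CSP$.

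The heart of the matter --- and what I expect to be the main obstacle --- is the scale-wise construction of the $H_n$ from thinness, since Theorem \ref{ostacolo} only supplies them under the stronger $UA$ hypothesis. Here I would argue by contradiction at a fixed scale, using the no-(discrete-)garland characterisation of Proposition \ref{new_prop}. Fix $a\in A$, $b\in B$ (they lie in the same uniform quasi-component) and, for each $n$, a $1/n$-chain $L_n$ from $a$ to $b$; each such chain must cross the intermediate level $\{x:d(x,A)=\delta\}$, which is disjoint from both $A$ and $B$. If for some scale \emph{no} separator covered by finitely many $1/n$-balls existed, the crossing points of the $L_n$ could not be captured in any totally bounded (hence, by completeness, compact) cut; one could then thin the chains and choose an open set $V$ of the form $\{x:|d(x,A)-\delta|<\eta\}$, which separates $a$ from $b$, so that $V\cap\bigcup_n L_n$ is closed and discrete. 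By Definition \ref{garland-configuration} this is a discrete garland, contradicting thinness through Proposition \ref{new_prop}. The delicate point is to organise the chains and the separating set $V$ simultaneously so that the intersection $V\cap\bigcup_n L_n$ is genuinely closed and discrete while $V$ still separates $a$ and $b$; getting this bookkeeping right, and packaging the resulting cuts into a \emph{nested} totally bounded family, is where the real work lies.
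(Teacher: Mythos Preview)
Your proposal is correct and follows essentially the same route as the paper. The paper derives Theorem~\ref{thmThin1} immediately from Theorem~\ref{main}, whose content is exactly your scheme: obtain a nested sequence of closed separators $H_n$ each covered by finitely many balls of diameter $<1/n$ (the extension of Theorem~\ref{ostacolo} from $UA$ to thin spaces, proved in \cite{BDP2} via the no-garland criterion of Proposition~\ref{new_prop}), then use completeness to intersect down to a compact $K$ and argue that every neighbourhood of $K$ disjoint from $A\cup B$ eventually contains some $H_N$ and hence separates. You have correctly isolated the only nontrivial step---manufacturing the totally bounded separators from thinness by a garland contradiction---and your sketch of that step (chains crossing the level $\{x:d(x,A)=\delta\}$, with failure of total boundedness yielding a discrete garland inside a separating slab) is the right idea; the paper does not supply those details either, deferring them to \cite{BDP2}.
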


A large source of thin spaces is provided by US spaces. 

\begin{theorem}\label{UA>Thin} {\rm (\cite{BDP2})} Every UA metric space space is thin. 
 \end{theorem}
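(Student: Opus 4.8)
The plan is to prove the contrapositive: if $X$ is \emph{not} thin, then $X$ is not UA. The engine will be the lemma of \cite{BDP2} recalled just above the statement — namely, that if $X$ possesses disjoint closed sets $H,K$ and a point $a\in H$ with $Q^u_a(H\cup K)\cap K\neq\emptyset$, then $X$ is not UA — so the entire task reduces to manufacturing such a configuration out of the failure of thinness. Throughout I will lean on the standard chain description of the uniform quasi component in a metric space: for a point $y$ and a point $z$ in a subspace $Z$, one has $z\in Q^u_y(Z)$ if and only if for every $\varepsilon>0$ there is a finite chain $y=x_0,x_1,\dots,x_k=z$ with all $x_i\in Z$ and $d(x_i,x_{i+1})<\varepsilon$. (Indeed, if $z$ were not $\varepsilon$-chainable to $y$ inside $Z$ for some $\varepsilon$, the indicator of the set of points $\varepsilon$-chainable to $y$ would be a uniformly continuous $\{0,1\}$-valued function on $Z$ separating $y$ from $z$.)

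First I would feed the failure of thinness through the garland characterization, Proposition \ref{new_prop}: since $X$ is not thin, it contains a \emph{discrete garland}, i.e. two distinct points $a,b$ that are chainable at every scale inside $X$, finite $(1/n)$-chains $L_n\subseteq X$ from $a$ to $b$, and an open set $V$ separating $a$ and $b$ such that $V\cap\bigcup_n L_n$ is closed and discrete. The goal is then to produce a pair of \emph{disjoint closed} sets $H\ni a$ and $K\ni b$ whose union contains all the chain points $\bigcup_n L_n$, so that $a$ and $b$ remain chainable at every scale \emph{inside} $H\cup K$; by the chain description this gives $b\in Q^u_a(H\cup K)\cap K$. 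Granting this, the lemma closes the argument at once. Let $f$ be a continuous function with $f\equiv 0$ on $H$ and $f\equiv 1$ on $K$ (Urysohn, $X$ being metric), and put $M=H\cup K$, so $f(M)=\{0,1\}$. Any uniformly continuous $(\{a,b\},M)$-approximation $g$ of $f$ would satisfy $g(M)\subseteq\{0,1\}$; its uniform continuity forces it to be constant along each sufficiently fine chain $L_n$, whence $g(a)=g(b)$, contradicting $g(a)=f(a)=0\neq 1=f(b)=g(b)$. Thus $f$ admits no such approximation and $X$ is not UA.

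The construction of $H$ and $K$ is the heart of the matter, and it is exactly where the naive attempts fail. Starting from a disconnection $Q^u_y(Y)=A\cup B$ of a quasi component and taking $H=A$, $K=B$ does not work, because the chains joining $a$ to $b$ need not stay inside the quasi component, so $b$ may fall out of $Q^u_a(A\cup B)$; and taking $H=A$, $K=\overline{Y\setminus A}$ does not work either, since $A$ is only clopen \emph{within} the quasi component and need not be open in $Y$, so $H$ and $K$ may overlap. The role of the garland's open set $V$ is to supply the ``gap'' in which the chains cross from the $a$-side to the $b$-side: one splits $\{a\}\cup\bigcup_n L_n$ into an $a$-part and a $b$-part along the separation $X\setminus V=P\sqcup R$ (with $a\in P$, $b\in R$), retaining every chain point, and then takes $H,K$ to be the closures of these parts. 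The main obstacle is the simultaneous demand — keeping $H$ and $K$ disjoint and closed while \emph{not} destroying the $\varepsilon$-chainability of $a$ to $b$ at all scales inside $H\cup K$ — and it is precisely the hypothesis that $V\cap\bigcup_n L_n$ be closed and discrete that lets the (few, isolated) chain points lying in the gap $V$, together with the accumulation points of the chains, be assigned to the two sides consistently, so that the resulting closed sets stay disjoint. Once $H$ and $K$ are in hand, the lemma of \cite{BDP2} applies verbatim and yields that $X$ is not UA.
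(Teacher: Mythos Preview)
Your approach is correct and is exactly the one the survey sets up: it places Lemma~3.2 of \cite{BDP2} and Proposition~\ref{new_prop} immediately before Theorem~\ref{UA>Thin} precisely as the two ingredients you combine (the survey itself gives no explicit proof, citing \cite{BDP2}). Passing from ``not thin'' to a discrete garland via Proposition~\ref{new_prop}, then building disjoint closed $H\ni a$, $K\ni b$ that still contain every chain $L_n$, is the intended route; your direct Urysohn/chain argument at the end is essentially a rederivation of the ``not UA'' half of Lemma~3.2.

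The only place your sketch is soft is the actual construction of $H$ and $K$, which you describe but do not pin down. It can be made explicit as follows: with $L=\{a,b\}\cup\bigcup_n L_n$, set $D=V\cap L$, and let $P,R$ be the two pieces of $X\setminus V$ (these are closed in $X$, being clopen in the closed set $X\setminus V$). Then $H=\overline{L\cap P}\cup D$ and $K=\overline{L\cap R}$ do the job: $\overline{L\cap P}\subseteq P$ and $\overline{L\cap R}\subseteq R$ are disjoint, $D\subseteq V$ is closed (by the discrete-garland hypothesis) and misses $R$, so $H\cap K=\emptyset$; and $H\cup K\supseteq L$, so every $L_n$ survives inside $H\cup K$ and your chain argument goes through verbatim. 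Note that the closedness of $D$ is what prevents accumulation points of the chains from landing in $V$ at all, so no delicate ``assignment of accumulation points'' is actually needed --- the closure handles everything outside $V$.
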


 Theorem \ref{thmThin1} follows from the following more precise result:

\begin{theorem} \label{main} Let $X$ be a complete thin metric space  and let $A, B$ be disjoint closed connected subsets of $X$. Then:
\begin{enumerate}
  \item there is a compact set $K$ such that each \nbd of $K$   disjoint from $A \cup B$ separates $A$ and $B$;
  \item hence $K$ intersects every closed connected set which meets $A$ and $B$;
  \item if $X$ is also locally compact, there is a compact set $K'$ which separates $A$ and $B$. 
\end{enumerate}
\end{theorem}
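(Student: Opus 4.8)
The plan is to reduce everything to the construction, for each $n$, of a \emph{decreasing} sequence of nonempty closed sets $H_n$ that separate $A$ and $B$ and with $H_n$ contained in a finite union of balls of diameter $<1/n$ --- exactly the conclusion of Theorem \ref{ostacolo}, but now only under the weaker hypothesis that $X$ is complete and thin. Granting such a sequence, I set $K=\bigcap_n H_n$. Since $K$ is closed and, for each $n$, contained in finitely many balls of diameter $<1/n$, it is totally bounded; as $X$ is complete, $K$ is compact, and $K\cap(A\cup B)=\emptyset$ because a separating set is always disjoint from the sets it separates.

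From this I would deduce the three items. For (1): given a neighbourhood $N$ of $K$ with $N\cap(A\cup B)=\emptyset$, I first show that $H_n\subseteq\mathrm{int}(N)$ for some $n$. Otherwise pick $x_n\in H_n\setminus\mathrm{int}(N)$; all $x_n$ lie in the compact set $H_1$ (here I use that the $H_n$ decrease and that $H_1$ is closed and totally bounded, hence compact), so a subsequence converges to some $x$; as $x_n\in H_m$ for $n\ge m$ and each $H_m$ is closed, $x\in\bigcap_m H_m=K\subseteq\mathrm{int}(N)$, contradicting $x\in X\setminus\mathrm{int}(N)$. Once $H_n\subseteq N$, the elementary fact that enlarging a separating closed set by a set disjoint from $A\cup B$ preserves separation shows that $N$ separates $A$ and $B$. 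Item (2) is then immediate: if a closed connected $E$ met both $A$ and $B$ but missed $K$, then $d(E,K)>0$, so a small ball-neighbourhood $N$ of $K$ would separate $A$ and $B$ while $E\subseteq X\setminus N$ meets both separated pieces --- impossible for a connected set; hence $d(E,K)=0$, and compactness of $K$ together with closedness of $E$ force $E\cap K\neq\emptyset$.

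For (3), I would use local compactness to thicken $K$: choose an open $N\supseteq K$ with $\overline N$ compact and $\overline N\cap(A\cup B)=\emptyset$. By (1), $N$ separates $A$ and $B$, say $X\setminus N=V_A\sqcup V_B$ with $A\subseteq V_A$, $B\subseteq V_B$ and $V_A,V_B$ separated; since $X\setminus N$ is closed, both $V_A$ and $V_B$ are closed in $X$. Then $K':=\partial V_A=V_A\cap\overline{N\cup V_B}=V_A\cap\overline N$ is a closed subset of the compact set $\overline N$, hence compact, and $X\setminus K'=\mathrm{int}(V_A)\sqcup(X\setminus V_A)$ exhibits a separation with $A\subseteq\mathrm{int}(V_A)$ (because $A\cap\overline N=\emptyset$) and $B\subseteq V_B\subseteq X\setminus V_A$. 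Thus $K'$ separates $A$ and $B$.

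The real work, and the step I expect to be the main obstacle, is producing the sets $H_n$ from thinness alone, i.e.\ transplanting Theorem \ref{ostacolo} from UA to complete thin spaces. Here I would argue by contradiction through the garland characterization (Proposition \ref{new_prop}): fixing a scale $\epsilon=1/n<d(A,B)$ (the distance is positive by Corollary \ref{positive}), I would study the $\epsilon$-chains joining $A$ to $B$ and the closed sets that cut them. If no such cut can be covered by finitely many $\epsilon$-balls, then, using completeness to realize accumulation points of the crossing points, one should be able to extract an infinite family of $\epsilon$-chains whose crossings form a closed discrete set escaping every compact set, together with an open set separating a suitable pair of uniformly connected points --- that is, a discrete garland, contradicting thinness. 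Making this extraction precise (in particular arranging the $H_n$ to be \emph{decreasing}, which the argument for (1) requires, and guaranteeing that the two endpoints of the garland lie in a common uniform quasi-component) is the delicate point; everything else is bookkeeping with separated sets and the completeness/total-boundedness dichotomy.
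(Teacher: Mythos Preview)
The paper is a survey and does not actually prove Theorem~\ref{main}; it attributes the result to \cite{BDP2} and merely states it. The only hint the paper gives is the remark following Theorem~\ref{ostacolo} that the $H_n$-construction ``can be proved for a larger class of spaces discussed in \S 6'' (i.e., thin spaces), with the compact separator then emerging in the complete case. Your two-step plan --- first transplant Theorem~\ref{ostacolo} to thin spaces, then intersect the $H_n$ --- is exactly the architecture the paper points to, so at that level you are aligned with the intended route.

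Your derivations of (1)--(3) from the decreasing separating sets $H_n$ are essentially sound, with one slip. You claim $H_1$ is compact because it is ``closed and totally bounded'', but being covered by finitely many balls of diameter $<1$ is total boundedness only at scale $1$; nothing in Theorem~\ref{ostacolo} forces $H_1$ to be totally bounded at finer scales. The repair is immediate and uses the full decreasing sequence: given $x_n\in H_n\setminus\mathrm{int}(N)$, note that for each $m$ the tail $\{x_n:n\ge m\}$ lies in $H_m$, which is covered by finitely many $1/m$-balls; a diagonal selection then yields a Cauchy subsequence, and completeness places its limit in $K=\bigcap_n H_n\subseteq\mathrm{int}(N)$, the desired contradiction. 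Your arguments for (2) and (3) are fine (in (2) you also need the neighbourhood $N$ disjoint from $A\cup B$, which follows from $d(K,A\cup B)>0$; this is implicit in your ``small'').

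The substantive gap is the one you yourself flag: producing the decreasing $H_n$ from thinness alone. Your sketch via the garland characterisation (Proposition~\ref{new_prop}) is the natural idea and is indeed how \cite{BDP2} proceeds, but what you have written is not yet a proof. You have not explained how failure of a finite-ball cover at a given scale yields a \emph{closed discrete} set of crossing points inside some open separator $V$, nor how to manufacture the two endpoints $a,b$ of the garland so that they genuinely lie in the same uniform quasi-component of a closed subspace (the sets $A,B$ are connected, not merely uniformly connected, which helps, but the passage from ``$\epsilon$-chains cannot be cut by a totally bounded set'' to ``discrete garland'' needs an explicit construction). Arranging the $H_n$ to decrease is a further bookkeeping issue you correctly identify. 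Since the present paper offers no details beyond the citation, you would have to supply this construction yourself to complete the proof.
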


\subsection{CSP vs thin and complete}

The  next examples show that the implications in Theorems \ref{thmThin1} and \ref{UA>Thin} cannot be inverted. 

\begin{example}
There exist many examples of separable metric space with  CSP which are not thin: 
  \begin{itemize}
    \item[(i)] the circle minus a point (it has two closed connected subsets at distance zero, so it cannot be thin by Corollary \ref{positive});
    \item[(ii)] the rationals $\Q$ (uniformly connected non-connected, hence not thin).
\end{itemize}
\end{example}

None of the above examples is complete. Here is an example of a complete separable metric space with CSP which is not thin. 

\begin{example}\label{last} 
Let $H_1$ and $H_2$ be the branches of hyperbolas $\{(x,y)\in \R^2: xy=1\}$  and $\{(x,y)\in \R^2: xy=2\}$, respectively, contained in the first quadrant. 
Then the space $X=H_1\cup H_2$ with the metric induced from $\R^2$ is a  complete separable space. Since  $H_1$ and $H_2$ are
 connected and at distance zero, it follows from Corollary \ref{positive}  that $X$ is not thin. On the other hand, the empty set 
separates the closed connected sets $H_1$ and $H_2$. So if $A$ and $B$ are  closed connected disjoint sets in $X$, it remains to consider only the  case when 
both $A$ and $B$ are contained in the same component $H_i$ ($i=1,2$). Now $A$  and $B$ can be separated by a point.  \end{example} 

Theorem \ref{thmThin1} can be given the following more general form. A metrizable space $X$ with compatible metrics $d_1, d_2$ such that $(X,d_1)$ is  complete (i.e. $X$ is \v{C}ech-complete) and $(X,d_2)$ is thin admits also a compatible metric $d$ such that  $(X, d)$ is complete and thin (namely, $d=max\{d_1, d_2\}$).
Hence every \v{C}ech-complete metrizable space that admits a compatible thin metric has CSP. This explains why the spaces in (i) above and Example \ref{last} have CSP. 

Although completeness was essentially used in the proof of Theorem \ref{thmThin1}, it is not clear whether it is in fact necessary, in other words:

\begin{question}\label{non-complete}
 Are there examples of thin spaces that do not have CSP? What about $UA$ spaces? 
\end{question}

As the following example shows, neither thinness nor $UA$-ness is preserved by passing to completions, thus an immediate application of Theorem \ref{thmThin1} (via passage 
to completions) cannot help attempts to answer Question \ref{non-complete}. 

\begin{example}
There is a $UA$ metric space whose completion is not thin (hence not $UA$).  Let $X = \bigcup_{n\in \N} \{1/n\} \times I$,  where $I$ is the unit interval 
$[0,1]\subset {\bf R}$, let $a=(0,0)$, $b=(0,1)$ and $Y=X\cup \{a,b\}$. We put on $Y$ the following metric. The distance between two points $(x_1,y_1)$ and $(x_2,y_2)$ is $|y_1-y_2|$ if $x_1=x_2$. Otherwise the distance is the minimum between  $y_1+y_2 + |x_1 - x_2|$ and $(1-y_1) + (1-y_2) +|x_1 - x_2|$. With this metric $Y$ is the completion of $X$ and the two points $a, b$ are the limits for $n \to\infty$ of $(1/n, 0)$ and $(1/n, 1)$, respectively. The space $Y$ is not thin since there is a garland consisting of $a$, $b$ and $\langle L_n \mid n \in {\bf N}\rangle$ where $L_n$ is a $1/n$-chain between $a$ and $b$ in $\{1/n\} \times I$. The space $X$ is $UA$ since $X$ is a union of a chain of compact sets, each 
attached to the next by at most one point (see \cite[Theorem 11.4]{BD1} and the introduction).  \end{example}

\subsection{Thin does not imply UA for complete metric spaces}

We give an example of a complete connected thin metric space that is not $UA$. 

\begin{example} For any cardinal $\alpha$ 
 the hedgehog  $J(\alpha)$  
is thin. Indeed, if $J(\alpha)$ were not thin, then by Proposition \ref{new_prop}, it would contain a discrete garland $a,b, \langle L_n \mid n \in {\bf N}
\rangle$. Let $V$ be an open set separating $a,b$ such that $V \cap \bigcup_n L_n$ is closed and discrete.  The minimal connected set $C$ containing $a,b$ must non-trivially 
intersect $V$, so it contains an open interval $I$ on one of the spikes. Now, whenever $1/n$ is less than the diameter of $I$, $L_n$ must intersect $I$, so $V \cap \bigcup_n L_n$ has an accumulation point, which is a contradiction. 
\end{example}

This gives the following immediate corollary of Theorem \ref{Hedg}

\begin{cor}
For every $\alpha \geq {\mathfrak b}$ the hedgehog $J(\alpha)$ is thin (so has the property CSP), but not UA. 
\end{cor}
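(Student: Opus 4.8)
The plan is to package the corollary from material already in hand: the thinness of $J(\alpha)$ established in the preceding Example, the completeness of the hedgehog metric, Theorem \ref{thmThin1}, and Theorem \ref{Hedg}.

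First I would record that $J(\alpha)$ is precisely the hedgehog $H_\alpha$ with $\alpha$ spikes introduced before Theorem \ref{Hedg}, so every statement about $H_\alpha$ transfers verbatim. The preceding Example already supplies thinness for every cardinal $\alpha$: by Proposition \ref{new_prop} it suffices to exclude discrete garlands, and this was carried out there by showing that any such garland would force an accumulation point inside a separating open set, a contradiction.

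To justify the parenthetical claim of $\CSP$, I would next verify completeness of $J(\alpha)$ in its natural metric, in which two points on a common spike are at distance the difference of their radial parameters and two points on distinct spikes at distance the sum. A Cauchy sequence is then eventually confined to a single spike, where it converges inside a complete interval, or else its radial parameters tend to zero and it converges to the central point; either way $J(\alpha)$ is complete. Feeding completeness together with thinness into Theorem \ref{thmThin1}, I would conclude that $J(\alpha)$ has $\CSP$ for every $\alpha$.

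Finally, for the failure of UA in the range $\alpha \geq \mathfrak b$, I would invoke Theorem \ref{Hedg}, which asserts that $H_\alpha$ is not even WUA for such $\alpha$. Since $UA$ implies $WUA$ (noted immediately after Definition \ref{UA:Definition}), the absence of WUA forces the absence of UA, and the corollary follows. The argument is essentially a matter of citation; the only steps requiring a moment of care are the verification of completeness and the bookkeeping identification $J(\alpha)=H_\alpha$, so I do not anticipate any genuine obstacle.
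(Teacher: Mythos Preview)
Your proposal is correct and matches the paper's own treatment: the corollary is presented there as an immediate consequence of the preceding Example (thinness of $J(\alpha)$ for all $\alpha$) together with Theorem \ref{Hedg} (failure of WUA, hence of UA, for $\alpha\geq\mathfrak b$), with the parenthetical CSP claim following from Theorem \ref{thmThin1} once completeness of the hedgehog is noted. Your explicit verification of completeness and the identification $J(\alpha)=H_\alpha$ are helpful bookkeeping that the paper leaves implicit.
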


The space $J(\alpha)$ is not separable for $\alpha>\omega$. On the other  hand, $\mathfrak b>\omega$ (\cite{vD}), hence the above examples are not 
separable. According to Theorem \ref{Hedg}  the hedgehogs $J(\alpha)$  are $UA$ for all $\alpha<\mathfrak b$, so one cannot get in this way an example of a separable space with the above properties  (see Question \ref{Ques_Thin}). 

\section{Gluing uniformly continuous functions}

It is well-known fact that a map $f: X \to Y$ between topological spaces is continuous whenever its restriction to each member of a locally finite closed cover of $X$
 is continuous. This section is dedicated to the analogue of this property for uniform continuity. 

\subsection{Straight spaces}

In order to characterize the spaces where uniformly continuous functions can be glued as the continuous ones, the following definition was introduced
in \cite{BDP3}: 

\begin{defn}
A space $X$ is called {\bf straight} if whenever $X$ is the union of two closed sets, then $f \in C(X)$ is u.c. if and only if its restriction to each of the closed sets is u.c.
\end{defn}

Apparently, it would be more natural to ask about the possibility to glue together {\em finite} number of u.c. functions instead of just two.
The following geometric criterion obtained in \cite{BDP3} justifies  this choice. 

Two subsets $A$ and $B$ of a uniform space $X$ are called  {\em $U$-distant} (or simply, {\em distant}) if there exists 
an entourage $U$ such that $A[U]\cap B=\emptyset$ (or equivalently,
there exists an entourage $U$ such that $A\cap B[U]=\emptyset$).

\begin{defn}\label{u_placed} Let $(X,{\mathcal U})$  be a uniform  space. A pair $C^+,C^-$ of closed sets  of $X$ is said to be  {\em u-placed} 
if $C^+_U$ and $ C^-_U$ are distant for every entourage $U$, where  
$
C^+_U=\{x\in C^+| x\not\in (C^+\cap C^-)[U]\}$
$
C^-_U=\{x\in C^-| x\not\in (C^+\cap C^-)[U]\}.
$
 \end{defn}

\begin{rem}\label{unif_clop1}
\begin{itemize}
\item[(a)] In the case of a metric space $(X,d)$ we always consider the metric uniformity of $X$, so that in such a case a
 pair $C^+,C^-$ of closed sets  of $X$ is {\em u-placed} if $d(C^+_\eps, C^-_\eps)>0$ holds
for every $\eps>0$, where $C^+_\eps=\{x\in C^+: d(x,C^+\cap C^-)\geq \eps\}$ and $C^-_\eps=\{x\in C^-: d(x,C^+\cap C^-)\geq \eps\}$.  
\item[(b)] Note that $C^+_\eps=C^+$ and $C^-_\eps=C^-$ when $C^+\cap C^-= \emptyset$ in Definition \ref{u_placed}. Hence a partition $X=C^+\cup C^-$ of $X$
into clopen sets is u-placed if and only if $C^+,C^-$ are uniformly clopen (a subset $U$ of a uniform space $X$ is {\it uniformly clopen} if the characteristic function $X\to
\{0,1\}$ of $U$ is uniformly continuous where $\{0,1\}$ is discrete).
  \end{itemize}
   \end{rem}

\begin{theorem}\label{glue}  For a uniform space $(X,{\mathcal U})$ and a pair $C^+,C^-$ of closed sets the following statements are equivalent:
\begin{enumerate}
\item[(1)]  the pair $C^+,C^-$ is u-placed;
\item[(2)]  a continuous function $f:C^+\cup C^-\to \R$ is u.c. whenever $f|_{C^+}$ and $f|_{C^-}$ are u.c.
\item[(3)] same as (2) with $\R$ replaced by a general uniform space $(M,{\mathcal V})$
\end{enumerate}
\end{theorem}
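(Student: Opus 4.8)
The plan is to establish the cycle of implications $(1) \Rightarrow (3) \Rightarrow (2) \Rightarrow (1)$. Here $(3) \Rightarrow (2)$ is immediate, since $\R$ with its usual uniformity is one of the admissible targets $(M,\mathcal V)$. The entire content therefore lies in the two remaining implications, and I expect the direction $(1) \Rightarrow (3)$ to be the main obstacle. Throughout write $D = C^+ \cap C^-$ and recall that $C^+_U = C^+ \setminus D[U]$ and $C^-_U = C^- \setminus D[U]$, so that $C^-$ is the disjoint union of $C^-_U$ and $C^- \cap D[U]$ (and symmetrically for $C^+$).

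For $(1) \Rightarrow (3)$, let $f \colon C^+ \cup C^- \to M$ be continuous with $f|_{C^+}$ and $f|_{C^-}$ u.c., and fix an entourage $V$ of $M$. First choose a symmetric entourage $V_1$ of $M$ with $V_1 \circ V_1 \subseteq V$; then, by uniform continuity on each piece, choose entourages $P_+, P_-$ of $X$ such that $P_\pm$-close pairs lying inside $C^\pm$ are sent into $V_1$. Next fix a symmetric entourage $U$ with $U \circ U \subseteq P_+ \cap P_-$, let $W_U$ witness that $C^+_U$ and $C^-_U$ are distant (this is where hypothesis $(1)$ enters), and set $W = W_U \cap U$. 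The non-mixed pairs are trivial, since $W \subseteq P_+ \cap P_-$. The heart of the matter is a \emph{mixed} pair $x \in C^+$, $y \in C^-$ with $(x,y) \in W$, which I handle by routing the comparison of $f(x)$ and $f(y)$ through a point of $D$, where both restrictions apply. The dichotomy supplied by u-placedness is: if $x \in D[U]$, pick $d \in D$ with $(x,d) \in U$; then $(f(x),f(d)) \in V_1$ since $x,d \in C^+$, and $(d,y) \in U \circ W \subseteq U \circ U \subseteq P_-$ gives $(f(d),f(y)) \in V_1$ since $d,y \in C^-$. If instead $x \notin D[U]$, i.e. $x \in C^+_U$, then $y \in x[W_U] \subseteq C^+_U[W_U]$, which is disjoint from $C^-_U$; as $y \in C^-$ this forces $y \in D[U]$, and I route symmetrically through a $d \in D$ with $(y,d) \in U$. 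In either case $(f(x),f(y)) \in V_1 \circ V_1 \subseteq V$, so $W$ witnesses uniform continuity of $f$.

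For $(2) \Rightarrow (1)$ I argue by contraposition: assuming the pair is \emph{not} u-placed, I produce a continuous $f \colon C^+ \cup C^- \to \R$ that is u.c. on each piece but not on the union. By hypothesis there is an entourage $U$ for which $C^+_U$ and $C^-_U$ are not distant. Choose (by Weil's metrization lemma for uniformities) a uniformly continuous pseudometric $\rho$ subordinate to $U$, say $\{\rho < \delta\} \subseteq U$, and define $f(x) = \min(\rho(x,D),1)$ for $x \in C^+$ and $f(x) = -\min(\rho(x,D),1)$ for $x \in C^-$. The two formulas agree (both vanish) on $D$, so by the pasting lemma for the closed cover $C^+ \cup C^-$ the function $f$ is continuous; each restriction is $\rho$-Lipschitz, hence u.c. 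Finally, failure of distance yields, for every entourage, points $x \in C^+_U$ and $y \in C^-_U$ that are arbitrarily uniformly close; since $x,y \notin D[U]$ forces $\rho(x,D),\rho(y,D) \geq \delta$, we get $|f(x)-f(y)| \geq 2\min(\delta,1) > 0$ on arbitrarily close pairs, so $f$ is not u.c. In the metric case this is simply $f = \pm\min(d(\cdot,D),\eps_0)$ with $\eps_0$ the radius from Remark (a). This contradicts $(2)$ and closes the cycle.

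The main obstacle is the case analysis in $(1) \Rightarrow (3)$: selecting the entourages in the correct order so that each leg of the routing through $D$ lands inside the prescribed $P_\pm$, and recognizing that u-placedness is precisely the dichotomy \emph{either $x$ is $U$-near $D$, or else its $W$-partner $y$ is forced $U$-near $D$}. Once this is in place, the remaining steps are routine entourage bookkeeping together with a standard pasting argument and the existence of a subordinate uniformly continuous pseudometric.
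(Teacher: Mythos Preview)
Your proof is correct. The paper is a survey and does not actually supply a proof of this theorem (the result is quoted from the authors' earlier work, cf.\ \cite{BDP3}), so there is no ``paper's own proof'' to compare against; your argument stands on its own.

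A few remarks confirming the details. In $(1)\Rightarrow(3)$ your dichotomy is indeed exhaustive and handles the degenerate case $D=\emptyset$ automatically: then $C^+_U=C^+$, $C^-_U=C^-$ are distant, so no mixed $W$-close pair can occur. The routing through $D$ is set up correctly, with $U\circ U\subseteq P_+\cap P_-$ guaranteeing that both legs land where needed. In $(2)\Rightarrow(1)$ the appeal to a subordinate uniformly continuous pseudometric is legitimate for an arbitrary uniform space (every uniformity is generated by such pseudometrics), and the function $\pm\min(\rho(\cdot,D),1)$ is well defined even when $D=\emptyset$ (it becomes $\pm 1$, and continuity follows since $C^+,C^-$ are then complementary clopen sets in $C^+\cup C^-$). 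The Lipschitz bound $|\rho(x,D)-\rho(y,D)|\le\rho(x,y)$ and the lower bound $\rho(x,D)\ge\delta$ for $x\notin D[U]$ are exactly what is required.
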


The next theorem extends the defining property of straight spaces to arbitrary finite products. 

\begin{theorem}\label{SvsWS} {\rm \cite{BDP3}}
If a metric space $X$ is {straight} and $X$ can be written as a union of finitely many closed sets $C_1, \ldots, C_n$ it follows that $f \in C(X)$ is u.c. if and only if each restriction $f|_{C_k}$ ($k=1,2, \ldots, n$) of $f$ is u.c.
\end{theorem}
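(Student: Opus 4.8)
The plan is to prove the nontrivial implication by induction on $n$, the converse being immediate since the restriction of a u.c. function to any subspace is again u.c. For $n=2$ the assertion is exactly the defining property of a straight space, which by Theorem \ref{glue} amounts to the pair $C_1,C_2$ being u-placed. For the inductive step the natural move is to group the cover as $X=(C_1\cup\cdots\cup C_{n-1})\cup C_n$: by straightness of $X$ and Theorem \ref{glue}, $f$ is u.c. if and only if $f|_{C_1\cup\cdots\cup C_{n-1}}$ and $f|_{C_n}$ are both u.c., and the second holds by hypothesis, so everything reduces to showing that $f$ is u.c. on the closed set $Y=C_1\cup\cdots\cup C_{n-1}$.

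I expect the main obstacle to be precisely here: one cannot simply invoke the inductive hypothesis on $Y$, because straightness is \emph{not} inherited by closed subspaces (for instance the union of two asymptotic branches of hyperbolas, as in Example \ref{last}, is a closed non-straight subspace of the plane). In other words, straightness of $X$ licenses the gluing of Theorem \ref{glue} only for binary closed covers of the \emph{whole} space $X$, while the sub-union $Y$ may fail to be straight. I would therefore abandon the subspace recursion and argue directly with a bad sequence. Assume $f$ is not u.c.; fix $\varepsilon>0$ and sequences $x_m,y_m$ with $d(x_m,y_m)\to 0$ and $|f(x_m)-f(y_m)|\ge\varepsilon$, and use the pigeonhole principle to pass to a subsequence with $x_m\in C_i$ and $y_m\in C_j$ for fixed indices; since $f|_{C_i}$ is u.c. we must have $i\neq j$.

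The next step is to exploit the uniform continuity of the individual restrictions to arrange that both sequences stay at distance $\ge\eta>0$ from $C_i\cap C_j$: if, say, $d(x_m,C_i\cap C_j)\to 0$, then choosing $t_m\in C_i\cap C_j$ with $d(x_m,t_m)\to 0$ we get $f(t_m)-f(x_m)\to 0$ (as $x_m,t_m\in C_i$), so that $t_m,y_m\in C_j$ with $d(t_m,y_m)\to 0$ and $|f(t_m)-f(y_m)|\ge\varepsilon/2$ would contradict uniform continuity of $f|_{C_j}$. With both sequences thus bounded away from $C_i\cap C_j$, I would then apply the u-placedness of the full binary cover $\bigl(C_i,\bigcup_{k\neq i}C_k\bigr)$ of $X$, which is guaranteed by straightness of $X$; since $x_m\in C_i$ and $y_m\in\bigcup_{k\neq i}C_k$ are close, this forces the discrepancy to migrate into the intersection of $C_i$ with some \emph{third} piece $C_l$ (with $l\neq i,j$), yielding a new bad sequence across $C_l$ and $C_j$ at distance $\ge\eta'>0$ from $C_l\cap C_j$.

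The crux of the whole argument is to control this migration. At each stage one produces a bad pair straddling two of the finitely many pieces, with the controlling $\varepsilon$ degrading by a fixed factor and one index being replaced by a fresh one; the hard part I anticipate is a monovariant (finiteness) argument over the $n$ pieces guaranteeing that the process cannot cycle indefinitely and must terminate in a bad sequence that lies inside a \emph{single} $C_k$, in direct contradiction with the uniform continuity of $f|_{C_k}$. This contradiction closes the induction and establishes that $f$ is u.c.
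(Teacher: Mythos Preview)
The paper itself does not supply a proof of this theorem; it is simply quoted from \cite{BDP3}. So there is no ``paper's own proof'' to compare against, and your proposal must stand on its own.

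Your identification of the obstacle is exactly right: one cannot recurse on the subspace $C_1\cup\cdots\cup C_{n-1}$ because straightness is not hereditary for closed subspaces. Your sequential set-up and the reduction to a bad pair $x_m\in C_i$, $y_m\in C_j$ bounded away from $C_i\cap C_j$ are also correct.

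The gap is in the migration step, and it is precisely where you say you ``anticipate'' the hard part. Your claim that at each stage ``one index [is] replaced by a fresh one'' is not justified and, as written, is false. After the first migration you obtain $t_m\in C_i\cap C_l$ (with $l\neq i,j$) and then apply u-placedness of the cover $\bigl(C_l,\bigcup_{k\neq l}C_k\bigr)$. But $t_m$ already lies in $C_l\cap C_i$, so $d(t_m,\,C_l\cap(\bigcup_{k\neq l}C_k))=0$: the u-placedness condition is vacuously satisfied at $t_m$ and simply hands you back the index $i$. The process can oscillate $i\to l\to i\to l\to\cdots$ indefinitely, with $\varepsilon$ halving at each step but never producing a contradiction.

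A repair is possible, but it requires choosing the binary cover more carefully at each stage so that \emph{all} previously visited indices sit on the same side. Concretely, keep $y_m\in C_j$ fixed, maintain a growing set $T_s\subseteq\{1,\dots,n\}\setminus\{j\}$ of visited indices, and at stage $s$ apply straightness to the cover $X=\bigl(\bigcup_{k\in T_s}C_k\bigr)\cup\bigl(\bigcup_{k\notin T_s}C_k\bigr)$. The intersection one is forced toward then lies in some $C_a\cap C_b$ with $a\in T_s$ and $b\notin T_s$; if $b=j$ one gets a contradiction, and otherwise $b$ is genuinely new and $|T_{s+1}|>|T_s|$, so the process terminates in at most $n-1$ steps. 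Even here there is a bookkeeping subtlety: to transfer the $\varepsilon$-gap from the old witness to the new one you need the old and new witnesses to share a common $C_a$, and this has to be arranged inductively (it does not come for free from $a\in T_s$ alone). Working this out is the real content of the proof; your sketch stops just short of it.
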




\begin{defn}\label{SCS_ULC_w} Let $X$ be a metric space. We say that  $X$ is $WULC$,       if for every pair of sequences $x_n$, $y_n$ in $X$ with
       $d(x_n,y_n)\to 0$ and such that the set $\{x_n\}$ is closed and discrete
\footnote{so that also the set $\{y_n\}$ is closed and discrete.} there exist a $n_0\in \N$ and connected sets
       $I(x_n,y_n)$ containing $x_n$ and $y_n$ for every $n\geq n_0$ in such       a way that the $\mbox{diam}I(x_n,y_n)\to 0$.
   \end{defn}

\begin{prop} Every $WULC$ space is straight. \end{prop}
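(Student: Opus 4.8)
The plan is to verify the defining property of straight spaces directly: given a closed binary cover $X = C^+ \cup C^-$ and a continuous $f \colon X \to \R$ whose restrictions $f|_{C^+}$ and $f|_{C^-}$ are u.c., I must show $f$ is u.c. Since one implication is automatic (restrictions of u.c.\ functions are u.c.), the content lies in gluing. Suppose $f$ is \emph{not} u.c. Then there exist $\varepsilon > 0$ and sequences $x_n, y_n \in X$ with $d(x_n,y_n)\to 0$ but $|f(x_n)-f(y_n)|\geq \varepsilon$ for all $n$. The u.c.\ hypotheses on $C^+$ and $C^-$ force that, for large $n$, the pair $x_n,y_n$ cannot both lie in $C^+$ nor both in $C^-$; so after passing to a subsequence I may assume $x_n \in C^+\setminus C^-$ and $y_n \in C^-\setminus C^+$ (the endpoints straddle the cover). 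This is exactly the configuration that $WULC$ is designed to rule out.

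Next I would extract from $x_n, y_n$ a \emph{closed discrete} witnessing pair so that Definition \ref{SCS_ULC_w} applies. The subtlety is that $d(x_n,y_n)\to 0$ alone does not make $\{x_n\}$ closed and discrete; the sequence could accumulate. To handle this I would split into cases on whether $(x_n)$ has a convergent subsequence. If $x_{n_k}\to x_0$, then $y_{n_k}\to x_0$ too, and continuity of $f$ gives $f(x_{n_k}),f(y_{n_k})\to f(x_0)$, contradicting $|f(x_{n_k})-f(y_{n_k})|\geq\varepsilon$. Hence no subsequence of $(x_n)$ converges, which means the set $\{x_n : n\in\N\}$ is closed and discrete (and by the footnote so is $\{y_n\}$); passing to a subsequence I may also take the $x_n$ to be distinct points.

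With the hypotheses of $WULC$ now in force, there are an $n_0$ and connected sets $I(x_n,y_n)\ni x_n,y_n$ for $n\geq n_0$ with $\operatorname{diam} I(x_n,y_n)\to 0$. The key step is to locate, inside each connected set $I_n := I(x_n,y_n)$, a point $z_n$ on which $f$ takes an \emph{intermediate} value close to, say, the midpoint between $f(x_n)$ and $f(y_n)$; since $f$ is continuous and $I_n$ is connected, $f(I_n)$ is an interval containing both $f(x_n)$ and $f(y_n)$, so it contains every value between them. I would choose $z_n \in I_n$ with $|f(z_n)-f(x_n)|\geq \varepsilon/2$ and $|f(z_n)-f(y_n)|\geq\varepsilon/2$. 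Because $\operatorname{diam} I_n \to 0$ we have $d(z_n,x_n)\to 0$ and $d(z_n,y_n)\to 0$. Now the decisive observation: whichever of $C^+, C^-$ the point $z_n$ lands in, it is paired with a same-set neighbour ($x_n$ if $z_n\in C^+$, $y_n$ if $z_n\in C^-$, using $x_n\in C^+$, $y_n\in C^-$) at distance tending to $0$ but with $|f$-gap$|\geq\varepsilon/2$. Passing to the subsequence on which the membership of $z_n$ is constant, this contradicts uniform continuity of $f$ on that closed piece. The main obstacle is this last bookkeeping step — ensuring the intermediate point $z_n$ is genuinely paired with a neighbour in the \emph{same} cover member so that one of the two u.c.\ hypotheses is violated — and it is resolved precisely because $x_n,y_n$ straddle $C^+,C^-$, so every point of the small connected bridge $I_n$ is forced to be $d$-close to a witness on its own side.
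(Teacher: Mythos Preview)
Your proof is correct and follows essentially the same route as the paper's: argue by contradiction with witnessing sequences $x_n,y_n$, rule out accumulation points by continuity of $f$, invoke $WULC$ to produce small connected bridges $I_n$, and then use the intermediate value theorem on $f(I_n)$ to manufacture a violation of uniform continuity on one of the closed pieces. The only cosmetic difference is that the paper runs the argument for an $m$-ary cover and finishes with a covering-by-small-diameter-sets contradiction (the interval $f(I_n)$ of length $\geq 2\varepsilon$ cannot be covered by $m$ sets of diameter $\leq \varepsilon/m$), whereas you treat the binary case directly and pick an explicit midpoint $z_n$; for $m=2$ these are the same idea.
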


\begin{proof}
 Assume $X$ is the union of finitely many closed sets $F_1, \ldots , F_m$ and the restriction of  a function $f \in C(X)$ to each of the closed sets
$F_k$ is u.c.  We have to check that $f$ is u.c. Pick $\eps>0$ and assume that $$|f(x_n)-f(y_n)|\geq 2.\eps\eqno(*)$$ for some $x_n$,
$y_n$ such that $d(x_n,y_n)\to 0$. It is clear, that the  sequence $x_n$ cannot have an accumulation point $x$
in $X$, since then some subsequence $x_{n_k}\to x$ and also $y_{n_k}\to x$. Now the continuity of $f$ would imply 
$|f(x_{n_k})-f(x)|\to 0$ and $|f(y_{n_k})-f(x)|\to 0$. Consequently, $|f(x_{n_k})-f(y_{n_k})|\to 0$
contrary to (*).  Therefore, the double sequence $x_n$, $y_n$ satisfies the condition (a) of Definition \ref{SCS_ULC_w}. Therefore, for large enough 
 $n$ we have a connected set $I_n$ containing $x_n, y_n$ such that $\mbox{diam}I_n <\delta$. We can choose $\delta>0$ such that also 
$|f(x)-f(y)|<\eps/m$ whenever $x, y$ belong to the same closed set $F_k$ and $d(x,y)<\delta$. 
Note that $f(I_n)$ is an interval with length $\geq 2\cdot \eps$ covered by $m$ subsets $f(I_n\cap F_k)$, $k=1,\ldots, m$ each with diameter $\leq \eps/m$. This leads to a
contradiction since an interval of length $\geq 2\cdot \eps$ cannot be covered by $m$ sets of diameter $\leq \eps/m$. \end{proof}

As a corollary we obtain that ULC spaces are straight.

\begin{theorem}\label{thm2.5}
 Let $(X, d)$ be locally connected. Then $(X, d)$ is straight if and only if it is uniformly locally connected. 
\end{theorem}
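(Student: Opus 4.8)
The plan is to prove the two implications separately; the backward one is essentially already in hand, while the forward one requires a construction.

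For the ``if'' direction (uniformly locally connected $\Rightarrow$ straight) there is nothing new to do. Uniform local connectedness trivially implies the property $WULC$ of Definition \ref{SCS_ULC_w}: for a closed discrete pair of sequences with $d(x_n,y_n)\to 0$, the connecting connected sets of vanishing diameter demanded by $WULC$ are furnished, for \emph{all} sufficiently close pairs of points, directly by the $\eps$--$\delta$ formulation of ULC. Since every $WULC$ space was already shown to be straight, we get ULC $\Rightarrow$ straight, and only the converse will use the standing hypothesis of local connectedness.

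For the ``only if'' direction I would argue by contraposition: assuming $X$ locally connected but \emph{not} uniformly locally connected, I would produce a binary closed cover $X=C^+\cup C^-$ that fails to be u-placed, so that Theorem \ref{glue} hands me a continuous function which is u.c. on $C^+$ and on $C^-$ but not u.c. on $X$, witnessing that $X$ is not straight. Failure of ULC supplies an $\eps_0>0$ and points $x_n,y_n$ with $d(x_n,y_n)\to 0$ such that no connected subset of diameter $<\eps_0$ contains both $x_n$ and $y_n$. Local connectedness first rules out accumulation: if $x_{n_k}\to x$ then also $y_{n_k}\to x$, and a connected open neighbourhood of $x$ of diameter $<\eps_0$ (available because $X$ is locally connected) would contain both points, a contradiction. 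Hence $\{x_n\}$ and $\{y_n\}$ are closed and discrete. The cover itself is built from the balls $B_n=B(x_n,\eps_0/3)$, which contain $y_n$ for large $n$: local connectedness makes the connected component $U_n$ of $x_n$ in $B_n$ open, and $y_n\notin U_n$, since otherwise $x_n,y_n$ would lie in the connected set $U_n$ of diameter $\le 2\eps_0/3<\eps_0$. As $U_n$ is clopen in $B_n$ its boundary lies in $X\setminus B_n$, so $d(x_n,\partial U_n)\ge\eps_0/3$ and, for large $n$, $d(y_n,\partial U_n)\ge\eps_0/6$. Setting $G=\bigcup_n U_n$, $C^+=\overline{G}$ and $C^-=X\setminus G$ gives a closed cover with $C^+\cap C^-=\partial G$, and the goal is to check that $x_n\in C^+$ and $y_n\in C^-$ both stay at distance $\ge\eps_0/6$ from $\partial G$; together with $d(x_n,y_n)\to 0$ this yields $d(C^+_{\eps_0/6},C^-_{\eps_0/6})=0$, so the pair is not u-placed.

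The hard part is precisely that $\partial G$ near a given $x_n$ or $y_n$ might be created not by the wall $\partial U_n$ but by a component $U_m$ of a \emph{nearby} base point $x_m$; the local separating walls must therefore be kept from interfering. I expect to arrange this by passing to a convenient subfamily of the pairs $(x_n,y_n)$, extracting from the closed discrete set $\{x_n\}$ either an infinite uniformly $\eps_0$-separated subfamily --- in which case the $B_n$ are pairwise far apart and $\partial G$ agrees locally with the single wall $\partial U_n$, giving the required distance bounds at once --- or else infinitely many pairs confined to one small ball, where one instead decomposes that ball into its (open) components and two-colours them so that infinitely many pairs are split across the resulting partition. In either case one obtains a non-u-placed closed cover, and Theorem \ref{glue} completes the contrapositive. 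This separation and interference bookkeeping is the only delicate ingredient; the remaining verifications are routine.
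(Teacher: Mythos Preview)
Your ``if'' direction is exactly the paper's: ULC $\Rightarrow$ WULC is immediate, and the preceding Proposition gives WULC $\Rightarrow$ straight, which is precisely the corollary recorded just before Theorem~\ref{thm2.5}.

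For the ``only if'' direction the paper (a survey) gives no argument; the proof lives in~[BDP4].  Your contrapositive plan --- manufacture a non-u-placed closed binary cover from a witness $(x_n,y_n)$ to the failure of ULC and invoke Theorem~\ref{glue} --- is the right one, and your Case~1 (an infinite $\eps_0$-separated subfamily, balls $B(x_n,\eps_0/3)$ pairwise far, $G=\bigcup U_n$) is correctly worked out.

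Case~2, however, has two loose ends as written.  First, the dichotomy at scale $\eps_0$ hands you a ball $B(p,\eps_0)$, whose components may have diameter up to $2\eps_0$; nothing you have said forces $x_n$ and $y_n$ into \emph{different} components of that ball (their non-connectability is only by sets of diameter $<\eps_0$), so ``two-colour so that infinitely many pairs are split'' is not yet justified.  Second, even when the pairs are split, the $x_n$ you retained may sit arbitrarily close to $\partial B(p,\eps_0)$, so you do not get the uniform distance from $C^+\cap C^-=\partial G$ that ``not u-placed'' requires.

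Both problems vanish if you simply run the dichotomy at a smaller scale, e.g.\ $\delta=\eps_0/10$.  Case~1 then uses balls of radius $\delta/3$, still of diameter $<\eps_0$ (so $y_n\notin U_n$) and now pairwise $\ge\delta/3$ apart, exactly as before.  In Case~2 infinitely many $x_n$ lie in some $B(p,\delta)$; passing to the concentric ball $B=B(p,2\delta)$, its components have diameter $\le 4\delta<\eps_0$ and therefore \emph{do} separate every pair $(x_n,y_n)$, while the retained $x_n,y_n$ sit at distance $\ge\delta$ from $X\setminus B\supseteq\partial G$.  The two-colouring argument (any infinite edge-set admits a bichromatic vertex $2$-colouring on infinitely many edges) and the rest of your computation then go through unchanged.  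With this single adjustment of constants your outline is complete.
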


\begin{theorem}\label{thm2.6}
 Let $(X, d)$ be a totally disconnected metric space. Then $X$ is straight if and only if $X$ is UC. 
\end{theorem}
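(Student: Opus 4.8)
The plan is to prove the two implications separately; the forward one is immediate and the converse carries all the weight. For $UC\Rightarrow$ straight I note that this direction needs no disconnectedness hypothesis at all: if $X$ is $UC$ then every $f\in C(X)$ is already u.c., so in the defining condition of straightness the hypothesis ``each restriction is u.c.'' is automatic and the biconditional ``$f$ is u.c. iff each restriction is u.c.'' holds trivially (the nontrivial direction is free because $f$ is u.c. outright). The real content is the converse.

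For straight $\Rightarrow UC$ I would argue by contraposition: assuming $X$ is totally disconnected and \emph{not} $UC$, I will produce a closed binary cover of $X$ that is not u-placed, whence $X$ is not straight by Theorem~\ref{glue}. First I would record the standard characterization that a metric space is $UC$ if and only if every two disjoint closed sets are at positive distance. The direction ``$\Rightarrow$'' is the familiar remark that $x\mapsto d(x,A)/(d(x,A)+d(x,B))$ is continuous, separates $A$ from $B$, and fails to be u.c. when $d(A,B)=0$. For ``$\Leftarrow$'', given a continuous non-u.c.\ $f$ and witnessing sequences $x_n,y_n$ with $d(x_n,y_n)\to0$ and $|f(x_n)-f(y_n)|\ge\eps$, continuity of $f$ forbids either sequence from clustering, so $\{x_n\}$ and $\{y_n\}$ are closed and discrete; passing to a subsequence on which the $f$-values interlace then splits them into two disjoint closed sets at distance $0$. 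Thus failure of $UC$ hands me two disjoint \emph{closed discrete} sets $S_A=\{a_n\}$, $S_B=\{b_n\}$ with $d(a_n,b_n)\to0$ and $S_A\cup S_B$ closed and discrete (hence locally finite).

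Total disconnectedness, i.e.\ total separatedness, now enters to convert these close pairs into a cover whose ``deep'' parts stay close. The obstruction to using a naive cover is that the partner $b_n$ lies within $d(a_n,b_n)\to0$ of $a_n$ on the other side, so the intersection of the two closed sets is never uniformly far from the $a_n$; total disconnectedness is exactly what lets the seam \emph{jump around} a close pair. In the favorable case where, after passing to a subsequence, the pairs are uniformly separated from one another, I would fix $\eps_0>0$, enclose a subsequence of pairs in pairwise disjoint open balls $B_n=B(a_n,2\eps_0)$ (so that $B(a_n,\eps_0)\cup B(b_n,\eps_0)\subseteq B_n$ once $d(a_n,b_n)<\eps_0$), and inside each $B_n$ use a global clopen set $V_n$ with $a_n\in V_n\not\ni b_n$ to split $B_n=(V_n\cap B_n)\sqcup((X\setminus V_n)\cap B_n)=:O_n^+\sqcup O_n^-$ into two sets open in $X$. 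Setting $U=\bigcup_n O_n^+$, $W=\bigcup_n O_n^-$ and $C^+=X\setminus W$, $C^-=X\setminus U$ gives a closed binary cover with $C^+\cap C^-=X\setminus\bigcup_nB_n$; since $B(a_n,\eps_0),B(b_n,\eps_0)\subseteq B_n$, the points $a_n,b_n$ lie in $C^+_{\eps_0}$, $C^-_{\eps_0}$ respectively while $d(a_n,b_n)\to0$, so the cover is not u-placed (equivalently, in the special case $C^+\cap C^-=\emptyset$ this is a clopen partition at distance $0$, not u-placed by Remark~\ref{unif_clop1}(b)), and $X$ is not straight.

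The hard part will be precisely this amalgamation when the witnessing pairs \emph{cannot} be made uniformly separated — most strikingly when $X$ is incomplete and the $a_n,b_n$ all crowd toward a single missing completion point, so that every infinite subfamily of pairs is Cauchy and no disjoint-ball enclosure of a fixed radius exists. There one cannot localize the per-pair clopen separations, and manufacturing a single global clopen set (or closed cover) that splits infinitely many pairs at once is the crux. I expect to dispose of it by separating the complete and incomplete cases: in the complete case exploiting the Atsuji structure of the failure of $UC$ (non-compactness of the derived set, or failure of uniform isolation, each of which yields pairs one can separate away from the rest of $S_A\cup S_B$), and in the incomplete case analyzing how clopen subsets of a totally separated $X$ must behave on approach to a completion hole, to force the existence of a clopen set whose trace on the pairs is cofinally two-valued. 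This is the step I would expect to require the most care.
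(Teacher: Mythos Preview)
The survey does not actually prove this theorem in the text; Theorems~\ref{thm2.5} and~\ref{thm2.6} are stated without argument, with the straightness material attributed to [BDP4] (Berarducci--Dikranjan--Pelant, \emph{An additivity theorem for uniformly continuous functions}). So there is no in-paper proof to compare against, and I can only assess your proposal on its own terms.

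Your reduction via Theorem~\ref{glue} and Remark~\ref{unif_clop1} is the right framework, and your ``favorable'' case is correct: once an infinite subfamily of witnessing pairs $(a_n,b_n)$ sits inside pairwise disjoint balls of a fixed radius $2\eps_0$, your construction of $C^{\pm}=X\setminus\bigcup_n O_n^{\mp}$ yields a closed binary cover with $C^+\cap C^-=X\setminus\bigcup_n B_n$, and then $a_n\in C^+_{\eps_0}$, $b_n\in C^-_{\eps_0}$ with $d(a_n,b_n)\to0$, so the cover is not u-placed. You are also right that this favorable case is automatic when $X$ is complete: a closed discrete subset of a complete metric space cannot be totally bounded, hence contains an infinite $\eps_0$-separated set. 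So your argument already proves the theorem for complete $X$.

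The gap is exactly where you place it, and it is genuine rather than cosmetic. When every witnessing sequence is totally bounded (all pairs crowding toward a hole $p\in\hat X\setminus X$), the per-pair clopen separators $V_n$ cannot be localized in disjoint balls of any fixed radius, and your amalgamation breaks down: with shrinking balls one only gets $d(a_n,C^+\cap C^-)\to0$, which is useless. Your proposed rescue --- forcing a single clopen set whose trace on the pairs is cofinally two-valued --- is a hope, not an argument. Total disconnectedness (trivial quasi-components) is strictly weaker than zero-dimensionality, and in spaces where clopen sets are scarce and rigid (Erd\H{o}s space is the standard caution) there is no routine mechanism producing such a set. Note too that ``every clopen set is uniformly clopen'' together with ``clopen sets separate points'' only yields trivial \emph{uniform} quasi-components, which does not by itself force UC; so even the cleanest form of the clopen-partition strategy does not obviously close the gap. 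What is missing is either a construction of a non-u-placed cover with \emph{nonempty} seam $C^+\cap C^-$ that nonetheless keeps $a_n,b_n$ uniformly far from the seam in the crowding regime, or an entirely different device --- and your outline supplies neither.
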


\subsection{Stability properties of straight spaces}

The next theorem shows that straightness spectacularly fails to be preserved under taking closed spaces. 

\begin{theorem}\label{thm2}
For every metric space $X$ with $\mbox{Ind} X=0$ the following are equivalent:
\begin{enumerate}
\item[(1)] $X$ is UC;
\item[(2)] every closed subspace of $X$ is straight;
\item[(3)]  whenever $X$ can be written as a union of a locally finite family  $\{C_i\}_{i\in I}$ of closed sets we have that $f \in C(X)$ is u.c. if and only if each restriction $f|_{C_i}$ of $f$, $i\in I$, is u.c.
\end{enumerate}
\end{theorem}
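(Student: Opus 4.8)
The plan is to exploit the hypothesis $\mathrm{Ind}\,X=0$ in order to reduce everything to Theorem \ref{thm2.6}. The key preliminary observation is that a metric space with $\mathrm{Ind}\,X=0$ (hence $\mathrm{ind}\,X=0$) has a base of clopen sets, so any two distinct points are separated by a clopen set; thus all quasi-components are singletons and $X$ is totally disconnected (indeed every subspace is, by monotonicity of $\mathrm{ind}$). This is exactly the setting in which Theorem \ref{thm2.6} identifies straightness with UC-ness, and it will drive both ``reverse'' implications. I would prove the equivalences by establishing the four implications $(1)\Rightarrow(2)$, $(1)\Rightarrow(3)$, $(2)\Rightarrow(1)$ and $(3)\Rightarrow(1)$.

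First the two implications out of (1), which do not even use zero-dimensionality. If $X$ is $UC$ then every $f\in C(X)$ is automatically u.c., so in any gluing statement the nontrivial ``restrictions u.c.\ $\Rightarrow$ $f$ u.c.'' direction holds for free (the other direction being always true); this gives $(1)\Rightarrow(3)$ at once for an arbitrary locally finite closed cover. For $(1)\Rightarrow(2)$ I would first show that a closed subspace $Y$ of a $UC$ space is again $UC$: given $g\in C(Y)$, extend it by the Tietze--Urysohn theorem (metric spaces being normal) to some $\tilde g\in C(X)$, which is u.c.\ because $X$ is $UC$, whence $g=\tilde g\restriction_Y$ is u.c.; thus $Y$ is $UC$, and a $UC$ space is trivially straight by the same ``for free'' argument. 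Hence every closed subspace of $X$ is straight.

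For the reverse implications I would feed straightness into Theorem \ref{thm2.6}. For $(2)\Rightarrow(1)$, apply (2) to $X$ itself (a closed subspace of $X$): then $X$ is straight, and since $X$ is totally disconnected, Theorem \ref{thm2.6} yields that $X$ is $UC$. For $(3)\Rightarrow(1)$, observe that a binary closed cover $X=C_1\cup C_2$ is a finite, hence locally finite, closed cover, so (3) specializes precisely to the defining property of a straight space; thus (3) implies that $X$ is straight, and again total disconnectedness together with Theorem \ref{thm2.6} gives that $X$ is $UC$. This closes all the equivalences.

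There is no serious obstacle once Theorem \ref{thm2.6} is available: the whole argument is the bookkeeping that $UC$ descends to closed subspaces (Tietze extension) and that $UC$-ness makes all the gluing equivalences vacuous, with Theorem \ref{thm2.6} supplying the only substantive step. The two points deserving care are the verification that $\mathrm{Ind}\,X=0$ genuinely forces $X$ to be totally disconnected in the sense required by Theorem \ref{thm2.6}, and the remark that a finite closed cover is a legitimate instance of the locally finite covers allowed in (3); both are routine.
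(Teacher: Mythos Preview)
Your argument is correct. The paper is a survey and does not include its own proof of this theorem (the result is attributed to \cite{BDP3}), so there is no line-by-line comparison to make; but your reduction to Theorem~\ref{thm2.6} is exactly the natural route and each of the four implications you give is sound. A couple of minor remarks: for $(1)\Rightarrow(2)$ you do not actually need Tietze---the standard Atsuji characterizations of $UC$ spaces (e.g., that the derived set is compact and points outside it are uniformly isolated) are visibly inherited by closed subspaces---but your extension argument is perfectly valid as well. Also, for the preliminary observation it suffices to note that $\mathrm{Ind}\,X=0$ directly gives clopen separation of any two disjoint closed sets, hence of any two points, so quasi-components are singletons; the detour through $\mathrm{ind}$ is unnecessary (though harmless, since $\mathrm{ind}\le\mathrm{Ind}$ always). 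Note finally that Theorem~\ref{thm2.6}, on which your whole argument rests, is itself stated without proof in this survey, so your reduction is from one cited result to another; that is entirely appropriate here.
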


The following notion is relevant for the description of the dense straight subspaces. 

\begin{defn}\label{def tight}(\cite{BDP5}) An extension $X\subseteq Y$ of topological spaces is called {\bf tight} if for
every closed binary cover $X=F^+\cup F^-$ one has
\begin{equation}\label{def_ttt}
     \overline{F^+}^Y\cap \overline{F^-}^Y=\overline{F^+\cap F^-}^Y.
\end{equation}
\end{defn}

With this notion one can characterize straightness of extensions. 

\begin{theorem}\label{str-dense}(\cite{BDP5}) Let $X,\ Y$ be metric spaces, $X\subseteq Y$ and let $X$ be dense in $Y$. Then
$X$ is straight if and only if $Y$ is straight and the extension $X\subseteq Y$
is \ttt. \end{theorem}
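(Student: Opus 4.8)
The plan is to establish the two implications separately, relying on two elementary facts about the dense inclusion $X\subseteq Y$: a uniformly continuous map from a subset of $Y$ into the complete space $\R$ extends uniquely to a uniformly continuous map on its closure in $Y$; and two continuous maps $Y\to\R$ that agree on the dense set $X$ coincide. Throughout I use the reformulation of straightness furnished by Theorem \ref{glue}: a metric space is straight precisely when each of its closed binary covers is u-placed in the sense of Remark \ref{unif_clop1}(a).

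For the forward implication, assume $X$ is straight. To see that $Y$ is straight, let $f\in C(Y)$ have uniformly continuous restrictions to the members of a closed binary cover $Y=G^+\cup G^-$. Then $f|_X$ is continuous and its restrictions to the closed sets $F^{\pm}=G^{\pm}\cap X$ (which cover $X$) are uniformly continuous, so $f|_X$ is uniformly continuous because $X$ is straight. As $X$ is dense in $Y$ and $\R$ is complete, $f|_X$ extends to a uniformly continuous map $\bar g\colon Y\to\R$; since $f$ is a continuous map agreeing with $\bar g$ on the dense set $X$, we get $f=\bar g$, hence $f$ is uniformly continuous. Thus $Y$ is straight. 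For tightness, fix a closed binary cover $X=F^+\cup F^-$; by straightness this pair is u-placed, so $d(F^+_\eps,F^-_\eps)>0$ for every $\eps>0$. If some $y\in\overline{F^+}^Y\cap\overline{F^-}^Y$ lay outside $\overline{F^+\cap F^-}^Y$, then $\delta:=d(y,F^+\cap F^-)>0$, and approximating $y$ by sequences $a_n\in F^+$ and $b_n\in F^-$ would place $a_n\in F^+_{\delta/2}$ and $b_n\in F^-_{\delta/2}$ for large $n$ while $d(a_n,b_n)\to0$, contradicting u-placedness. Hence $\overline{F^+}^Y\cap\overline{F^-}^Y\subseteq\overline{F^+\cap F^-}^Y$; the reverse inclusion is automatic, so the extension is tight.

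For the converse, assume $Y$ is straight and $X\subseteq Y$ is tight, and let $f\in C(X)$ have uniformly continuous restrictions to the members of a closed binary cover $X=F^+\cup F^-$. Extend $f|_{F^{\pm}}$ to uniformly continuous maps $h^{\pm}$ on the closures $\tilde G^{\pm}=\overline{F^{\pm}}^Y$. Since $F^+\cup F^-=X$ is dense in $Y$, the closed sets $\tilde G^+,\tilde G^-$ cover $Y$, and by tightness $\tilde G^+\cap\tilde G^-=\overline{F^+\cap F^-}^Y$. On the dense subset $F^+\cap F^-$ of this intersection both $h^+$ and $h^-$ equal $f$, so by continuity $h^+=h^-$ on $\tilde G^+\cap\tilde G^-$; therefore $h^+$ and $h^-$ paste to a single map $F\colon Y\to\R$, which is continuous by the pasting lemma for the closed cover $Y=\tilde G^+\cup\tilde G^-$. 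Since $F|_{\tilde G^{\pm}}=h^{\pm}$ is uniformly continuous and $Y$ is straight, $F$ is uniformly continuous; as $F$ agrees with $f$ on $F^+$ and on $F^-$, we have $F|_X=f$, whence $f$ is uniformly continuous. Thus $X$ is straight.

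The one genuinely delicate point is the compatibility $h^+=h^-$ on the overlap in the converse: this is exactly where tightness is indispensable. Without it the intersection $\tilde G^+\cap\tilde G^-$ could strictly contain $\overline{F^+\cap F^-}^Y$, and on the extra part the two uniformly continuous extensions need not agree, so they would fail to define even a well-defined (let alone continuous) function on $Y$, blocking the appeal to straightness of $Y$. The remaining steps are routine, modulo the standard extension and uniqueness facts noted at the outset; the degenerate case $F^+\cap F^-=\emptyset$ (where tightness forces $\tilde G^+\cap\tilde G^-=\emptyset$ and the paste is over disjoint clopen pieces) is handled verbatim.
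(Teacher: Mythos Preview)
The present paper is a survey and does not reproduce a proof of this result, merely citing \cite{BDP5}, so there is no in-paper proof to compare against. Your argument is correct: the forward direction combines the extension--uniqueness trick (for straightness of $Y$) with the u-placed characterization from Theorem~\ref{glue} (for tightness), and the converse is the natural extend-and-paste construction, where you have correctly isolated tightness as precisely the hypothesis needed to make the two uniformly continuous extensions $h^{\pm}$ agree on the overlap $\tilde G^+\cap\tilde G^-=\overline{F^+\cap F^-}^Y$ so that the pasting lemma and straightness of $Y$ can be invoked.
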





\subsection{Products of straight spaces}\label{Prod_str}

Here we discuss preservation of straightness under products. 

Nishijima and Yamada \cite{Y} proved the following 

\begin{theorem} {\rm (\cite{Y})}\label{yam} Let $X$ be a straight space. Then $X\times K$ is straight for each compact space $K$ if and only if $X\times (\omega +1)$ is straight.
\end{theorem}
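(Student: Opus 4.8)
The forward implication is immediate and requires no work: since $\omega+1$ is itself a compact space, the hypothesis that $X\times K$ is straight for \emph{every} compact $K$ includes the instance $K=\omega+1$. All the content lies in the reverse implication, which I would establish by contraposition. The plan is to assume that $X$ is straight and that $X\times(\omega+1)$ is straight, and then suppose toward a contradiction that $X\times K$ fails to be straight for some compact (metrizable) $K$, carrying its unique uniformity.

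First I would extract a quantitative witness of the failure. By Theorem \ref{glue} a space is straight exactly when every closed binary cover is u-placed, and by Remark \ref{unif_clop1}(a) a pair fails to be u-placed precisely when the $\eps$-separation breaks down. So non-straightness of $X\times K$ yields a closed binary cover $X\times K=C^+\cup C^-$, a number $\eps>0$, and sequences $a_n=(x_n,k_n)\in C^+_\eps$ and $b_n=(x_n',k_n')\in C^-_\eps$ (each at distance $\geq\eps$ from $C^+\cap C^-$) with $d(a_n,b_n)\to 0$, whence $d_K(k_n,k_n')\to 0$. Next I would invoke compactness of $K$: passing to a subsequence gives $k_n\to k_0$, and then $k_n'\to k_0$ as well. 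Setting $T=\{k_0\}\cup\{k_n:n\}\cup\{k_n':n\}$, the set $T$ is compact, hence closed in $K$, so $Z:=X\times T$ is a closed subspace of $X\times K$ containing every $a_n$ and $b_n$. The crucial localization is that restricting the cover to $Z$ preserves the separation: since $C^+\cap C^-\cap Z\subseteq C^+\cap C^-$, distances only increase, so $d(a_n,\,C^+\cap C^-\cap Z)\geq\eps$ and likewise for $b_n$. Thus $(C^+\cap Z,\,C^-\cap Z)$ is a closed binary cover of $Z$ that is not u-placed, and $Z$ is not straight.

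The argument would then split on a dichotomy for $T$. If $T$ is infinite, it is a convergent sequence together with its limit, hence homeomorphic — and, being compact metric, uniformly homeomorphic — to $\omega+1$; multiplying such a uniform homeomorphism by $\mathrm{id}_X$ produces a uniform homeomorphism $X\times T\to X\times(\omega+1)$. As straightness is a uniform invariant, $X\times(\omega+1)$ would fail to be straight, contradicting the hypothesis. If instead $T$ is finite, then after a further subsequence the witnesses satisfy $k_n=k_n'=k_0$, so $a_n,b_n$ lie in the single fibre $X\times\{k_0\}\cong X$; the same localization exhibits $X$ itself as non-straight, contradicting the other hypothesis. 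Either branch yields a contradiction, completing the reverse implication.

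The main obstacle, and the point where both hypotheses are genuinely consumed, is this final dichotomy: the failure of u-placedness a priori lives in the large product $X\times K$, and one must transport it — without losing the quantitative $\eps$-gap — either into a copy of $X\times(\omega+1)$ or into a single fibre isomorphic to $X$. The assumption that $X$ is straight is exactly what eliminates the degenerate finite-$T$ case, which is why it cannot be omitted from the statement even though it is invisible in the generic case.
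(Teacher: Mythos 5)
The paper does not actually prove this theorem --- it is quoted from Nishijima--Yamada [NY] without proof --- so there is no in-paper argument to compare against; judged on its own, your proof is correct and complete within the survey's metric framework (where ``compact'' means compact metrizable, as you note, so that sequential compactness is available). The two load-bearing steps both check out: (i) via Theorem~\ref{glue} and Remark~\ref{unif_clop1}(a), non-straightness yields a non-u-placed closed binary cover with a quantitative witness $\eps$, and since $C^+\cap C^-\cap Z\subseteq C^+\cap C^-$ the distance to the intersection can only increase upon restriction to the closed subspace $Z=X\times T$, so the witness survives localization; (ii) the countable compact set $T$ has $k_0$ as its unique accumulation point, so when infinite it is homeomorphic (hence, by Heine--Cantor, uniformly homeomorphic) to $\omega+1$, and straightness is a uniform invariant, while the finite case collapses the witness into a single fibre isometric to $X$, which is where the hypothesis that $X$ is straight is consumed. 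This localization argument is noticeably more self-contained than the route suggested by the survey's own later material: Theorem~\ref{thmPROD} (from [BDP6]) would instead have you show that, for straight $X$, straightness of $X\times(\omega+1)$ forces $X$ to be precompact (since $\omega+1$ is not ULC), and then that a precompact straight space times any compact space is straight. That route buys structural information about $X$ (precompactness) as a by-product; yours avoids the full product classification and transfers the failure directly into a copy of $X\times(\omega+1)$ or of $X$.
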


The next lemma easily follows from the definitions. 

\begin{lemma}\label{prod_ULC} {\rm (\cite{BDP6})} A product $X\times Y$ is ULC if and only if both $X$ and $Y$ are ULC.
\end{lemma}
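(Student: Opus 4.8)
The plan is to prove the biconditional characterizing when a product $X \times Y$ is ULC, recalling the definition from \S 1.1: a metric space is ULC if for every $\eps > 0$ there is $\delta > 0$ such that any two points at distance $< \delta$ lie in a connected set of diameter $< \eps$. I will equip $X \times Y$ with a product metric, say $d((x_1,y_1),(x_2,y_2)) = \max\{d_X(x_1,x_2), d_Y(y_1,y_2)\}$; since the standard product metrics are uniformly equivalent, the ULC property is independent of this choice.

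First I would prove the easier direction, that ULC of $X \times Y$ implies ULC of each factor. The key observation is that each factor embeds isometrically into the product: for a fixed $y_0 \in Y$ the map $x \mapsto (x, y_0)$ is an isometry onto the slice $X \times \{y_0\}$, and the projection $\pi_X \colon X \times Y \to X$ is a $1$-Lipschitz retraction onto this slice. Given $\eps > 0$, I apply ULC of the product to obtain $\delta > 0$; then if $d_X(x_1, x_2) < \delta$, the points $(x_1, y_0)$ and $(x_2, y_0)$ are at distance $< \delta$ in the product, hence lie in a connected set $W$ of diameter $< \eps$. Its image $\pi_X(W)$ is connected (continuous image of a connected set), contains $x_1$ and $x_2$, and has diameter $< \eps$ because $\pi_X$ is $1$-Lipschitz. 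This establishes ULC for $X$, and symmetrically for $Y$.

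For the converse, suppose both $X$ and $Y$ are ULC and fix $\eps > 0$. I would choose $\delta_X, \delta_Y > 0$ witnessing ULC for $X$ and $Y$ respectively at scale $\eps$, and set $\delta = \min\{\delta_X, \delta_Y\}$. Given $(x_1, y_1)$ and $(x_2, y_2)$ at distance $< \delta$ in the product, both $d_X(x_1,x_2) < \delta$ and $d_Y(y_1,y_2) < \delta$ hold. Then there is a connected set $A \subseteq X$ of diameter $< \eps$ containing $x_1, x_2$, and a connected set $B \subseteq Y$ of diameter $< \eps$ containing $y_1, y_2$. The product $A \times B$ is connected (a product of connected sets is connected), has diameter $\max\{\mathrm{diam}\, A, \mathrm{diam}\, B\} < \eps$ in the chosen metric, and contains both $(x_1, y_1)$ and $(x_2, y_2)$. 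This furnishes the required small connected set, completing the proof.

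The argument is genuinely routine once the definition of ULC and the behaviour of the product metric are unwound, which is presumably why the paper flags the lemma as one that ``easily follows from the definitions.'' The only point requiring any care — and the closest thing to an obstacle — is being explicit that the conclusion is insensitive to which equivalent product metric one uses: the diameter estimate $\mathrm{diam}(A \times B) < \eps$ is cleanest for the max-metric, so for a different choice (such as the $\ell^2$ or $\ell^1$ product metric) one absorbs the resulting constant into the choice of $\delta$ via uniform equivalence, which does not affect the ULC property since ULC is itself a uniform notion.
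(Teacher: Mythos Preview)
Your proof is correct and proceeds exactly as the paper suggests: the lemma is stated without proof, with only the remark that it ``easily follows from the definitions,'' and your argument unwinds precisely those definitions in the expected way.
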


The next proposition, proved in \cite{BDP6}, plays a crucial role in the proof of Theorem \ref{thmPROD}: 

\begin{prop}\label{Thm_loc_conn} 
If $X \times Y$ is straight, then $X$ is ULC or $Y$ is precompact. \end{prop}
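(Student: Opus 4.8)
\medskip

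The plan is to prove the contrapositive: assuming that $X$ is not ULC and that $Y$ is not precompact, I will exhibit a closed binary cover $X\times Y=C^+\cup C^-$ that fails to be u-placed; by the equivalence (1)$\Leftrightarrow$(2) of Theorem~\ref{glue} such a cover produces a continuous function on $X\times Y$ which glues two u.c. restrictions but is not itself u.c., so $X\times Y$ is not straight. First I extract the data. Since $X$ is not ULC there are $\eps_0>0$ and sequences $a_n,b_n\in X$ with $d(a_n,b_n)\to 0$ such that every connected subset of $X$ containing both $a_n$ and $b_n$ has diameter $\ge\eps_0$. Since $Y$ is not precompact there are $\eta>0$ and points $y_n\in Y$ with $d(y_i,y_j)\ge\eta$ for $i\ne j$. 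Put $p_n=(a_n,y_n)$ and $q_n=(b_n,y_n)$ in $Z=X\times Y$, equipped with the max metric.

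Next I record the geometric features that drive the construction. Because the $y_n$ are $\eta$-separated, the sets $\{p_n\}$ and $\{q_n\}$ are uniformly discrete, hence closed and discrete in $Z$, while $d(p_n,q_n)=d_X(a_n,b_n)\to 0$. Moreover the projection $\pi_X\colon Z\to X$ is $1$-Lipschitz, so for any connected $C\sq Z$ containing $p_n$ and $q_n$ the set $\pi_X(C)$ is connected and contains $a_n,b_n$; hence $\operatorname{diam}C\ge\operatorname{diam}\pi_X(C)\ge\eps_0$. Thus $p_n$ and $q_n$ cannot be joined by connected sets of small diameter: this is exactly the failure of the connectedness clause of Definition~\ref{SCS_ULC_w} for $Z$ along a closed discrete sequence, in accordance with Lemma~\ref{prod_ULC}, and it is the obstruction that the two u.c. restrictions will exploit.

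To build the cover, fix $0<\eps'<\min\{\eps_0/2,\eta/3\}$, and for each $n$ let $E_n$ be the connected component of $q_n$ in the closed ball $\overline B(q_n,\eps')\sq Z$. The disjointness of the $Y$-slabs $B_Y(y_n,\eta/3)$ guarantees that the balls $\overline B(q_n,\eps')$, and hence the $E_n$, lie in pairwise distant regions, so $\bigcup_n E_n$ is closed; I then set $C^-=\overline{\bigcup_n E_n}$ and $C^+=\overline{Z\setminus\bigcup_n E_n}$, a closed binary cover of $Z$. The clean consequence of the connectedly-far property is that $p_n\notin C^-$: indeed $\overline{E_n}$ is connected, contains $q_n$, and has diameter $\le 2\eps'<\eps_0$, so it cannot contain $p_n$ (otherwise $\pi_X(\overline{E_n})$ would be a connected set of diameter $<\eps_0$ containing $a_n$ and $b_n$); since near $p_n$ only the $n$-th slab contributes to $C^-$, we get $p_n\in C^+\setminus C^-$ and symmetrically $q_n$ sits on the $C^-$ side. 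Thus the pairs $p_n,q_n$ straddle the seam $C^+\cap C^-$ while $d(p_n,q_n)\to0$.

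The remaining, and decisive, point is to show that this cover is genuinely not u-placed, i.e.\ that for a fixed $\eps''>0$ both $p_n$ and $q_n$ stay at distance $\ge\eps''$ from the seam $C^+\cap C^-$. The diameter estimate above yields the exclusion $p_n\notin\overline{E_n}$ cleanly, but turning it into a \emph{uniform} lower bound $d(p_n,C^+\cap C^-)\ge\eps''$ (and the corresponding bound for $q_n$) is the heart of the matter, because the quasi-component phenomenon allows $\overline{E_n}$ to approach $p_n$ metrically even though it cannot contain it. Controlling this requires performing the separation slab by slab and choosing the radius (possibly passing to a subsequence of the $y_n$) so that the component boundaries stay uniformly away from both marked points; it is precisely here that the non-precompactness of $Y$ is indispensable, since it furnishes infinitely many pairwise distant slabs on which the separations can be carried out independently, preventing the seam from accumulating near the $p_n$'s and $q_n$'s. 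I expect this uniform-deepness step, rather than the bookkeeping around it, to be the main obstacle; once it is secured, Theorem~\ref{glue} finishes the proof.
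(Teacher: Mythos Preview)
The paper does not contain a proof of this proposition; it merely cites \cite{BDP6}. So I cannot compare your argument to the paper's own. I will assess the proposal on its merits.

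Your overall strategy---extract witnessing sequences from the failure of ULC in $X$ and from non-precompactness of $Y$, place the pairs $p_n,q_n$ in pairwise distant $Y$-slabs, and build a closed binary cover that is not u-placed---is natural and the bookkeeping in the first three paragraphs is fine. You are also right that the crux is the ``uniform depth'' of $p_n$ and $q_n$ relative to the seam.

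However, the specific construction via connected components has a structural defect that your proposed remedies (adjusting the radius, passing to a subsequence of the $y_n$) cannot cure. The problem is on the $q_n$ side, not the $p_n$ side. Take $X$ to be the Cantor set and $Y=\R$: then $X$ is not ULC (it is totally disconnected and has no isolated points) and $Y$ is not precompact, so this pair is a legitimate instance of the contrapositive. For \emph{any} radius $\eps'>0$ the component $E_n$ of $q_n=(b_n,y_n)$ in $\overline B(q_n,\eps')$ is exactly $\{b_n\}\times[y_n-\eps',\,y_n+\eps']$, because the $X$-factor is totally disconnected. This set has empty interior in $Z$, so $q_n\in\partial E_n\subseteq C^+\cap C^-$ and $d(q_n,C^+\cap C^-)=0$. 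No choice of radius and no thinning of the $y_n$'s changes this; the obstruction lives entirely in $X$. The same failure occurs whenever $b_n$ is not an interior point of its own component in the ball $\overline B_X(b_n,\eps')$, which is the generic situation once $X$ is not locally connected at $b_n$.

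The underlying difficulty is a mismatch of separation notions: failure of ULC gives you that $a_n$ and $b_n$ lie in different \emph{connected components} of small balls, but ``not u-placed'' asks for a \emph{clopen} (indeed metric) separation with a uniform margin. Components and quasi-components need not coincide, and components can have empty interior, so a component-based $E_n$ is too thin to keep $q_n$ off the seam. A workable argument has to manufacture the separating cover by some other mechanism---for instance, by building the two closed pieces slab-by-slab from level sets of carefully chosen continuous functions on $X$ (where one can control the distance to the zero set directly), rather than from components of balls.
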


\begin{theorem}\label{thmPROD} {\rm (\cite{BDP6})} The product $X\times Y$ of two metric spaces is straight if and only if both $X$ and $Y$ are straight and one of the following conditions holds:
\begin{itemize}
  \item[(a)]  both $X$ and $Y$ are precompact;
  \item[(b)] both $X$ and $Y$ are ULC;
  \item[(c)] one of the spaces is both precompact and ULC.    
\end{itemize}
\end{theorem}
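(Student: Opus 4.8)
The plan is to prove the two directions separately, treating necessity first (where Proposition \ref{Thm_loc_conn} does the heavy lifting) and then the three sufficiency cases, of which (b) is immediate and (a) and (c) carry all the difficulty.

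For necessity, suppose $X\times Y$ is straight. First I would show each factor is straight by a pull-back argument: given a closed binary cover $X=A\cup B$ and $f\in C(X)$ whose restrictions to $A$ and $B$ are u.c., form $F=f\circ\pi_X\in C(X\times Y)$. Then $X\times Y=(A\times Y)\cup(B\times Y)$ is a closed cover, and since the projection $\pi_X$ is u.c., each restriction $F|_{A\times Y}$, $F|_{B\times Y}$ is u.c.; straightness of $X\times Y$ makes $F$ u.c., and restricting along the uniform embedding $x\mapsto(x,y_0)$ recovers that $f$ is u.c. So $X$, and by symmetry $Y$, is straight. To obtain the trichotomy I would apply Proposition \ref{Thm_loc_conn} both to $X\times Y$ and (using $X\times Y\cong Y\times X$) to $Y\times X$, getting the two disjunctions ``$X$ ULC or $Y$ precompact'' and ``$Y$ ULC or $X$ precompact''. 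A short Boolean case analysis of these four combinations yields exactly (b) ($X,Y$ both ULC), (a) ($X,Y$ both precompact), or (c) (one factor simultaneously precompact and ULC).

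For sufficiency, case (b) is essentially free: by Lemma \ref{prod_ULC} the product of two ULC spaces is ULC, and ULC spaces are straight (the corollary to the WULC proposition), so $X\times Y$ is straight. The substance lies in (a) and (c), where precompactness enters. Here I would organize the argument around completions, using Theorem \ref{str-dense}: since $X,Y$ are straight they embed tightly in their metric completions $\tilde X,\tilde Y$, and $\widetilde{X\times Y}=\tilde X\times\tilde Y$, so it suffices to show that $\tilde X\times\tilde Y$ is straight and that the extension $X\times Y\subseteq\tilde X\times\tilde Y$ is tight. In case (a) both $\tilde X,\tilde Y$ are compact, hence $\tilde X\times\tilde Y$ is compact and so straight; in case (c), say $X$ precompact and ULC, one checks that $\tilde X$ is compact and ULC while $\tilde Y$ is straight, and Theorem \ref{yam} then reduces straightness of $\tilde Y\times\tilde X$ to straightness of the single product $\tilde Y\times(\omega+1)$. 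The remaining and decisive point in both cases is the verification of tightness of the product extension (and of the residual $\omega+1$-case), for which I would use the u-placed reformulation of straightness in Theorem \ref{glue} applied to an arbitrary closed binary cover of the product.

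The main obstacle I anticipate is exactly this last step, because a closed binary cover of $X\times Y$ need not be a product of covers of the factors, so the information about the two factors cannot simply be multiplied together. Concretely, to verify the u-placed/tightness condition I would argue by contradiction from a witnessing pair of sequences $p_n=(a_n,b_n)$, $q_n=(c_n,d_n)$ with $d(p_n,q_n)\to 0$ yet both staying at distance $\ge\eps$ from the overlap $C^+\cap C^-$; precompactness of a factor is precisely what lets me extract a convergent subsequence in that coordinate and thereby freeze it, while uniform local connectedness is what supplies small connected sets joining $a_n$ to $c_n$ (or $b_n$ to $d_n$) so that straightness of the other factor can be invoked slice-wise along $\{a\}\times Y$ or $X\times\{b\}$. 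Balancing these two mechanisms across the two coordinates, and showing that the hypotheses (a) and (c) are exactly strong enough to rule out such a witnessing configuration while a failure of both would reconstruct one, is the delicate core of the proof.
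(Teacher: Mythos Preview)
The paper is a survey and does not supply a proof of Theorem~\ref{thmPROD}; it merely states the result (from \cite{BDP6}) and singles out Proposition~\ref{Thm_loc_conn}, Lemma~\ref{prod_ULC}, and Theorem~\ref{yam} as the relevant tools. Against that backdrop, your necessity argument is exactly what the paper points to: the pull-back along $\pi_X$ shows factors of straight products are straight, and applying Proposition~\ref{Thm_loc_conn} in both orders yields the two disjunctions whose conjunction is the trichotomy (a)/(b)/(c). Case (b) of sufficiency via Lemma~\ref{prod_ULC} is likewise the intended one-line argument.

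For cases (a) and (c) your completion-and-tightness strategy is a natural reading of Theorem~\ref{str-dense}, and you correctly locate the real difficulty. Two cautions, however. First, your appeal to Theorem~\ref{yam} in case (c) only \emph{reduces} to showing $\tilde Y\times(\omega+1)$ is straight; since $\omega+1$ is compact but not ULC, this is itself an instance of the problem and still needs a direct argument rather than being a residual detail. Second, and more substantively, the step ``$X\subseteq\tilde X$ and $Y\subseteq\tilde Y$ tight $\Rightarrow$ $X\times Y\subseteq\tilde X\times\tilde Y$ tight'' is not available as a general lemma: the paper itself records the preservation of tightness under products as an \emph{open} question (Question~\ref{ppstr}), even for precompact factors in the infinite case. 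So the proof in \cite{BDP6} cannot proceed by invoking an abstract product-of-tight-extensions principle; it must exploit precompactness and ULC directly, essentially along the lines you sketch in your final paragraph (freezing a coordinate by compactness, threading small connected sets by ULC). Your outline is sound, but be aware that what you call the ``decisive point'' really is the whole proof in those cases, not a verification to be appended at the end.
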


It turns out that the straightness of an infinite product of ULC spaces is related to connectedness: 

\begin{theorem}\label{thmPROD1} {\rm (\cite{BDP6})} Let $X_n$ be a ULC space for each $n\in \N$ and $X = \Pi_n X_n$. 
\begin{itemize}
  \item[(a)] $X$ is ULC if and only if all but finitely many $X_n$ are connected.
  \item[(b)] The following are equivalent:
\begin{itemize}
    \item[(b$_1$)] $X$ is straight.
    \item[(b$_2$)] either $X$ is ULC or each $X_n$ is precompact.
\end{itemize}
\end{itemize}
\end{theorem}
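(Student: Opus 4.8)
The plan is to treat the two parts separately, leaning on the finite-product facts Lemma~\ref{prod_ULC}, Proposition~\ref{Thm_loc_conn} and Theorem~\ref{thmPROD}, together with the corollary (stated after the $WULC$ proposition) that every $ULC$ space is straight. Throughout I equip $X=\prod_n X_n$ with the standard product metric $d(x,y)=\sum_n 2^{-n}\min\{1,d_n(x_n,y_n)\}$, for which convergence is coordinatewise and the tail $\sum_{n>N}2^{-n}$ can be made arbitrarily small.

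For part (a) I would argue both implications directly. For the forward direction I prove the contrapositive: if infinitely many $X_n$ are disconnected, then for each such $n$ pick $u_n,v_n$ in distinct connected components of $X_n$ and let $x,y\in X$ differ only in the $n$-th coordinate, taking there the values $u_n,v_n$. Then $d(x,y)\le 2^{-n}$, while $\pi_n$ sends any connected set containing $x,y$ onto a connected subset of $X_n$ containing $u_n,v_n$, which is impossible; hence $x,y$ lie in no connected set at all. Running $n$ to infinity through the disconnected indices yields, for the fixed scale $\varepsilon=\tfrac12$ and every $\delta>0$, a pair at distance $<\delta$ that cannot be joined by a connected set, so $X$ is not $ULC$. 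For the converse, let $F$ be the finite set of indices where $X_n$ may be disconnected, fix $N\ge\max F$, and given $\varepsilon>0$ build the joining sets coordinatewise: for $n\le N$ use the $ULC$-ness of $X_n$ to get a small connected $C_n\ni x_n,y_n$ once $x,y$ are close, and for $n>N$ take $C_n=X_n$, which is connected by hypothesis. Then $\prod_n C_n$ is a connected set containing $x,y$ of diameter $<\varepsilon$ (the first $N$ coordinates controlled by $ULC$, the tail by $\sum_{n>N}2^{-n}$), so $X$ is $ULC$.

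For the forward implication of part (b), assume $X$ is straight but not $ULC$; by part (a) the set $D$ of indices with $X_n$ disconnected is infinite. Fix any $m$ and write $X$, up to the obvious uniform isomorphism, as $\bigl(\prod_{n\ne m}X_n\bigr)\times X_m$. Since $D\setminus\{m\}$ is still infinite, part (a) shows $\prod_{n\ne m}X_n$ is not $ULC$, so Proposition~\ref{Thm_loc_conn} forces $X_m$ to be precompact. As $m$ was arbitrary, every $X_n$ is precompact, which is exactly (b$_2$).

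The backward implication of part (b) splits in two. If $X$ is $ULC$ then $X$ is straight because every $ULC$ space is straight. The remaining case, \emph{every $X_n$ precompact $\Rightarrow$ $X$ straight}, is the heart of the matter and the step I expect to be the main obstacle. Here $X$ is totally bounded, so its completion is $\hat X=\prod_n\hat X_n$ with each $\hat X_n$ compact; thus $\hat X$ is compact, hence straight (on a compact, or more generally $UC$, space continuity already implies uniform continuity, so straightness is automatic). By Theorem~\ref{str-dense} it then suffices to show that the dense extension $X\subseteq\hat X$ is \ttt, i.e. $\overline{F^+}^{\hat X}\cap\overline{F^-}^{\hat X}=\overline{F^+\cap F^-}^{\hat X}$ for every closed binary cover $X=F^+\cup F^-$. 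This is where the $ULC$-ness of the factors must be used: given $p$ in the left-hand side, choose $a^k\in F^+$ and $b^k\in F^-$ with $a^k,b^k\to p$, so that $(a^k)_n,(b^k)_n\to p_n$ in \emph{every} coordinate; joining $(a^k)_n$ to $(b^k)_n$ by small connected sets via $ULC$, and using that a connected set meeting both $F^+$ and $F^-$ must meet $F^+\cap F^-$, one extracts points of $F^+\cap F^-$ converging to $p$. The genuine difficulty is that coordinatewise convergence is not uniform in $n$, so one cannot connect $a^k$ to $b^k$ in all coordinates at once for a single $k$; controlling the infinitely many tail coordinates (where the two points may momentarily sit in different components of $X_n$) while keeping the joining set of small diameter is the crux, and it is exactly here that both the precompactness (compactness of $\hat X$) and the local connectedness supplied by $ULC$ are indispensable.
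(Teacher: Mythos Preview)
The paper is a survey and does not prove Theorem~\ref{thmPROD1}; it merely states the result with a citation to \cite{BDP6}. So there is no ``paper's proof'' to compare against. I can only assess your argument on its own.

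Your treatment of part~(a) and of the implication $(b_1)\Rightarrow(b_2)$ is correct and clean: the contrapositive in~(a) works because the projection of a connected set must be connected, and the use of Proposition~\ref{Thm_loc_conn} after peeling off one factor is exactly right (note that $X\cong\bigl(\prod_{n\ne m}X_n\bigr)\times X_m$ is a uniform equivalence for the product metric, so straightness transfers).

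The genuine gap is in $(b_2)\Rightarrow(b_1)$, precisely where you yourself flag it. In the ``all $X_n$ precompact'' case your plan is to prove that $X\subseteq\widehat X$ is tight and then invoke Theorem~\ref{str-dense}. The difficulty you describe is real and you have not overcome it: given $p\in\overline{F^+}\cap\overline{F^-}$ and approximants $a^k\in F^+$, $b^k\in F^-$, you want a small connected set in $X$ joining $a^k$ to $b^k$, but $X$ need not be ULC here (that is exactly the case under discussion), so no such set is available in general. Your coordinatewise joining produces $\prod_n C_n$, but for infinitely many tail coordinates $a^k_n$ and $b^k_n$ can lie in different connected components of $X_n$ (ULC does not force $X_n$ to be connected), and then $C_n$ must be all of $X_n$ or at least of diameter bounded away from $0$, so the product set is not small; worse, if those components are distinct you cannot choose a connected $C_n$ at all. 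As written, the argument does not go through, and you have not supplied the missing idea (for instance, exploiting that each $\widehat X_n$ is compact and ULC, hence has only finitely many components, to control the tail). Until that step is completed, the backward implication of~(b) remains unproved.
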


This theorem completely settles the case of infinite powers of ULC space:

\begin{cor}\label{coroPROD}  Let $X$ be ULC. Then 
\begin{itemize}
   \item[(a)] $X^\omega $ is ULC if and only if $X$ is connected; 
   \item[(b)] $X^\omega$ straight if and only if $X$ is either connected or precompact.
\end{itemize}
\end{cor}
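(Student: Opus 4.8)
The plan is to obtain both assertions as immediate specializations of Theorem \ref{thmPROD1}, applied to the constant sequence $X_n = X$ for all $n\in\N$. Since $X$ is assumed ULC, every factor $X_n = X$ is ULC, so the hypothesis of Theorem \ref{thmPROD1} is satisfied and $X = \Pi_n X_n$ there is precisely $X^\omega$ in our notation.

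For part (a), I would invoke Theorem \ref{thmPROD1}(a): $X^\omega$ is ULC if and only if all but finitely many of the factors are connected. The only step requiring a word is the logical translation of this condition in the constant case. If $X$ is connected, then \emph{every} factor is connected, so certainly all but finitely many are; conversely, if $X$ is not connected, then \emph{no} factor is connected, so it is false that all but finitely many are connected. Hence the condition ``all but finitely many $X_n$ are connected'' is equivalent to ``$X$ is connected'', which gives (a).

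For part (b), I would apply Theorem \ref{thmPROD1}(b): $X^\omega$ is straight if and only if either $X^\omega$ is ULC or each factor $X_n = X$ is precompact. The clause ``each $X_n$ is precompact'' collapses to ``$X$ is precompact'' because all factors coincide, and by part (a) the clause ``$X^\omega$ is ULC'' is equivalent to ``$X$ is connected''. Substituting these two equivalences yields that $X^\omega$ is straight if and only if $X$ is connected or $X$ is precompact, which is exactly statement (b).

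There is no genuine obstacle here, as the corollary is a direct reading of Theorem \ref{thmPROD1} in the case of equal factors; the only point deserving care is the elementary observation that, for a constant sequence, the cofinitary quantifier ``all but finitely many $X_n$'' and the universal quantifier ``each $X_n$'' both reduce to a statement about the single space $X$.
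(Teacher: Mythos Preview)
Your proposal is correct and matches the paper's approach exactly: the corollary is presented in the paper as an immediate specialization of Theorem \ref{thmPROD1} to the constant sequence $X_n = X$, with no separate proof given. Your only added content is the careful unwinding of the cofinitary and universal quantifiers in the constant case, which is routine and correctly done.
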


The above results leave open the question about when  infinite products of precompact straight spaces are still straight
(see Questions \ref{Ques-Inf} and \ref{Ques-inf}).

\section{Questions}\label{Questions} 

Our first open problem is about the implication (1) in Diagram 2: 

\begin{problem}\label{UA>WUA}{\rm (\cite[Problem 1.4]{CD2})} Does $WUA$ imply $UA$ in $C(\R^n)$? What about $C(\R^2)$?
\end{problem}

\subsection{Questions on UA functions and UA spaces}

A general question is to characterize the $UA$ and $WUA$ spaces and functions. We list below more specific questions {\rm (\cite{BD1})}.

\begin{enumerate}
\item Characterize the $UA$ functions $f\colon\R^2 \to \R$.

\item Characterize the $UA$ subsets of $\R$.

\item Characterize the topological spaces which admit a $UA$ uniformity, and those which are $UA$ under every uniformity
compatible with their topology. Does the latter class of spaces also include the $UC$ spaces?

\item Do $WUA$ and $UA$ coincide for connected spaces?

\item Suppose that a uniform space $X$ has a dense $UA$ subspace. Does it follow that $X$ is $UA$? (This fails for $WUA$ according to Example 7.7 from 
[BeDi1].)

\item Let $X$ be the pushout of two $WUA$ spaces over a single point. Is $X$ \ $WUA$? (This holds for $UA$ by Theorem 11.1  from 
[BeDi1].)

\item Suppose that every pseudo-monotone function $f \in C(X)$ is $UA$. Is then $X$ a $UA$  space?

\item Define $2$-$UA$ similarly as $UA$ but with the set $K$ of cardinality at  most $2$. Is then $2$-$UA$ equivalent to $UA$? 
\end{enumerate}


\subsection{Questions on thin spaces}

The next question is related to Theorem \ref{thmThin1}: 

\begin{question}
Is it true that a complete thin {\em uniform} space has $CSP$? What about a complete $UA$ uniform space? \end{question}

Our next  question is about how much one needs the fact that {\bf uniform} quasi  components are  connected.

\begin{question}
Is it true that every complete metric space $X$ such that every closed subspace of $X$ has connected quasi components necessarily  has CSP ?
\end{question}

\begin{question}\label{Ques_Thin}
Is it true that every (complete) metric thin separable space is $UA$?
\end{question}

\subsection{Questions on straight spaces}

Theorem \ref{str-dense} gives a criterion for straightness of a dense subspace $Y$ of a straight space $X$ in terms of properties of the  embedding $Y\hookrightarrow X$
(namely, when $X$ is a tight extension of $Y$).  The analogue of this question for {\em closed} subspaces is somewhat unsatisfactory. 
We saw that uniform retracts, clopen subspaces, as well as direct summands, of straight spaces are always straight ([BDP2]). 
On the other hand, closed subspaces even of ULC spaces may fail to be straight (see [BDP2]). 
Another instance when a closed subspace of a straight space fails to be straight is given by the following fact proved in \cite{BDP3}:
 the spaces $X$ in which {\em every} closed subspace is straight are precisely the UC spaces \cite{BDP3}.  Hence every straight space that is not UC has closed non-straight subspaces. This motivates the following general

\begin{problem}\label{QuesProd} {\em Find a sufficient condition ensuring that a closed subspace $Y$ of a straight space $X$ is straight. }
\end{problem}

\begin{question} Generalize the results on straight spaces from the category of metric spaces to the category of uniform spaces. \end{question}

The results from \S \ref{Prod_str} describe when infinite products of ULC spaces are again ULC or straight. The case of 
precompact spaces is still open, so we start with the following still unsolved

\begin{question}\label{Ques-inf} Let $X$ be a precompact straight space. Is the infinite power $X^\omega$ necessarily straight?
\end{question}

More generally:

\begin{question}\label{Ques-Inf} Let $X_n$ be a precompact straight space  for every $n\in \N$. Is the infinite product $\prod_n X_n$ necessarily straight?
\end{question}

It is easy to see that a positive answer to this question is equivalent to a positive answer to item (b) of the following general question:
 (i.e., the version of Theorem 30 for products of {\em precompact} spaces):

\begin{question}\label{ppstr} Let the metric space $Y_i$ be a tight extension of $X_i$  for each $i\in \N$. 
\begin{itemize}
\item[(a)] Is $\Pi_i Y_i$  a tight extension of $\Pi_i X_i$.
\item[(b)]  What about {\em precompact} metric spaces $Y_i$? 
\end{itemize} 
\end{question}

\bigskip
\section*{Acknowledgements }
This research was supported by the grant MTM2009-14409-C02-01
and Slovenian Research Agency Grants P1-0292-0101, J1-2057-0101 and J1-9643-0101.

\label{\mylastpage}

\end{document}